\def\BA@fnsymbol#1{\ensuremath{%
  \ifcase#1\or *\or \dagger\or \ddagger\or \mathsection\or \mathparagraph\or \|\or **\or \dagger\dagger\or \ddagger\ddagger \else@ctrerr\fi}}%
\theoremstyle{plain}
\newtheorem{theorem}{Theorem}[section]
\newtheorem{proposition}[theorem]{Proposition}%
\newtheorem{corollary}[theorem]{Corollary}
\newtheorem{lemma}[theorem]{Lemma}
\theoremstyle{definition}
\theoremstyle{remark}
\newtheorem{example}[theorem]{Example}%
\DeclareMathOperator*{\rank}{rank}
\DeclareMathOperator*{\setint}{int}
\DeclareMathOperator*{\setspan}{span}
\DeclareMathOperator*{\Ext}{Ext}
\DeclareMathOperator*{\svec}{svec}
\DeclareMathOperator*{\Ker}{Ker}
\DeclareMathOperator*{\ri}{ri}
\newcommand{\relmiddle}[1]{\mathrel{}\middle#1\mathrel{}}
\newcommand{\bbE}{\mathbb{E}}
\newcommand{\bbF}{\mathbb{F}}
\newcommand{\bbK}{\mathbb{K}}
\newcommand{\bbL}{\mathbb{L}}
\newcommand{\bbR}{\mathbb{R}}
\newcommand{\bbS}{\mathbb{S}}
\newcommand{\bbU}{\mathbb{U}}
\newcommand{\bbV}{\mathbb{V}}
\newcommand{\bbX}{\mathbb{X}}
\newcommand{\bbY}{\mathbb{Y}}
\newcommand{\calA}{\mathcal{A}}
\newcommand{\calB}{\mathcal{B}}
\newcommand{\calC}{\mathcal{C}}
\newcommand{\calH}{\mathcal{H}}
\newcommand{\calP}{\mathcal{P}}
\newcommand{\calR}{\mathcal{R}}
\newcommand{\calS}{\mathcal{S}}
\newcommand{\calV}{\mathcal{V}}
\newcommand{\CP}{\mathcal{CP}}
\newcommand{\COP}{\mathcal{COP}}
\newcommand{\RNum}[1]{\uppercase\expandafter{\romannumeral #1\relax}} 
\newcommand{\Rnum}[1]{\lowercase\expandafter{\romannumeral #1\relax}} 
\title{Facial structure of copositive and completely positive cones over a second-order cone}
\let\@fnsymbol\@arabic
\author{
\normalsize
    Mitsuhiro Nishijima\thanks{Department of Industrial and Systems Engineering, Keio University, 3-14-1 Hiyoshi, Kohoku-ku, Yokohama-shi, 2238522, Kanagawa, Japan. ({\tt nishijima@keio.jp}).}
\and
\normalsize
        Bruno F. Louren\c{c}o\thanks{Department of Fundamental Statistical Mathematics, The Institute of Statistical Mathematics, 10-3 Midori-cho, Tachikawa-shi, 1908562, Tokyo, Japan. ({\tt bruno@ism.ac.jp}).}
        }
\begin{document}
\maketitle

\begin{abstract}\noindent
We classify the faces of copositive and completely positive cones over a second-order cone and investigate their dimension and exposedness properties.
Then we compute two parameters related to chains of faces of both cones.
At the end, we discuss some possible extensions of the results with a view toward analyzing the facial structure of general copositive and completely positive cones.
\end{abstract}
\vspace{0.5cm}

\noindent
{\bf Key words. }Copositive cones, Completely positive cones, Second-order cones, Faces, Exposedness, Dimension, Longest chain of faces
%

\section{Introduction}
For a closed cone $\bbK$ in the Euclidean space $\bbR^n$,
\begin{equation}
\COP(\bbK) \coloneqq \{\bm{A}\in \bbS^n \mid \bm{x}^\top\bm{A}\bm{x} \ge 0 \text{ for all $\bm{x}\in \bbK$}\} \label{eq:COP}
\end{equation}
is the \emph{copositive cone over $\bbK$} in the space $\bbS^n$ of real $n\times n$ symmetric matrices and
\begin{equation}
\CP(\bbK)\coloneqq \left\{\sum_{i=1}^k \bm{a}_i\bm{a}_i^\top \relmiddle| \text{$k$ is a positive integer and } \bm{a}_i \in \bbK \text{ for all $i = 1,\dots,k$}\right\} \label{eq:CP}
\end{equation}
is the \emph{completely positive cone over $\bbK$}.
These cones have attracted interest because they can be used to obtain convex conic reformulation of certain optimization problems with nonconvex and combinatorial constraints~\cite{BDd+2000,Burer2009,Burer2012,BD2012,BMP2016}.

If the underlying cone $\bbK$ is the nonnegative orthant $\bbR_+^n$, the cone of entrywise nonnegative vectors in $\bbR^n$, then the associated copositive and completely positive cones are the cones of copositive and completely positive matrices in the usual sense, respectively; see~\cite{SB2021} for a comprehensive discussion of them.
The nonnegative orthants are examples of \emph{symmetric cones}, i.e., self-dual and homogeneous cones.
Here we recall that a cone is said to be homogeneous if its automorphism group acts transitively on the interior of the cone.
Besides the nonnegative orthants, the class of symmetric cones includes second-order cones and cones of positive semidefinite matrices.
Motivated by a variety of applications~\cite{BD2012,BMP2016,Gowda2019} of copositive and completely positive cones over general symmetric cones,
previous works have discussed approximations~\cite{NN2024_Approximation,NN2024_Generalizations}, the membership problem~\cite{Orlitzky2021}, and their facial structure~\cite{NL2025}.

Dickinson~\cite{Dickinson2011} studied the facial structure of copositive and completely positive cones over nonnegative orthants.
He investigated extreme rays and their exposedness, and maximal faces and their dimensions of the copositive and completely positive cones.
Nishijima~\cite{Nishijima2024} computed the length of a longest chain of faces and the distance to polyhedrality of the copositive and completely positive cones over nonnegative orthants.
Our previous result~\cite{NL2025} showed that faces of a symmetric cone induce faces of the corresponding copositive cone.
In particular, by exploiting this result, we provided non-exposed extreme rays of copositive cones over symmetric cones of dimension greater than or equal to $2$~\cite[Theorem~\mbox{3.3}]{NL2025}.
This result extends the corresponding result for nonnegative orthants proved in \cite[Theorem~\mbox{4.4}]{Dickinson2011}.

The aforementioned results contribute to the understanding of the facial structure of copositive and completely positive cones over symmetric cones, but they also raise some questions.
First, we would like to obtain other classes of faces of copositive cones over symmetric cones than those provided in \cite{NL2025}.
Second, we would like to extend results for nonnegative orthants, those shown in \cite{Dickinson2011,Nishijima2024} for example, to general symmetric cones.

In this paper, we study the facial structure of copositive and completely positive cones over second-order cones.
The second-order cone $\bbL^n$ of dimension $n \ge 2$ is defined as
\begin{equation}
\bbL^n \coloneqq \left\{\bm{x}\in \bbR^n \relmiddle| x_1 \ge \sqrt{x_2^2 + \cdots +  x_n^2}\right\}. \label{eq:soc}
\end{equation}
An important motivation for this study is the fact that our previous results~\cite{NL2025} imply that copositive cones over symmetric cones have faces isomorphic to $\COP(\bbL^n)$ for some $n \ge 2$, as we will discuss in Appendix~\ref{apdx:EJA}.
In addition, as shown in Proposition~\ref{prop:clconv_cone_to_CP}, analogous results are true for completely positive cones.
Therefore, by investigating the facial structure of $\COP(\bbL^n)$ and $\CP(\bbL^n)$, we can deepen our understanding of the facial structure of copositive and completely positive cones over general symmetric cones, respectively.

One useful fact is that $\COP(\bbL^n)$ and $\CP(\bbL^n)$ can be expressed using a semidefinite constraint, respectively; see \eqref{eq:COP_Ln} and \eqref{eq:CP_Ln}.
This makes the investigation of their facial structure tractable and allows us to classify the faces of the cones and study their exposedness and dimensions as will be discussed in Sections~\ref{subsec:faces_COP}, \ref{subsec:face_CP_ge}, and \ref{subsec:face_CP_eq}.
From non-exposed faces of $\COP(\bbL^n)$, we can obtain other classes of non-exposed faces of copositive cones over symmetric cones other than those provided in \cite[Theorem~\mbox{3.3}]{NL2025}.
In addition, we compute the length of a longest chain of faces and the distance to polyhedrality of $\COP(\bbL^n)$ and $\CP(\bbL^n)$ in Sections~\ref{subsec:length_COP} and \ref{subsec:length_CP}.

Furthermore, we discuss some possible extensions.
Because second-order cones are examples of symmetric cones, we can utilize the second-order cones to probe whether results that hold for nonnegative orthants may also hold for general symmetric cones.
For example, we show that an analogous result to \cite[Theorem~\mbox{4.6.\Rnum{1}}]{Dickinson2011} does not hold for second-order cones (Example~\ref{ex:not_face_cd_dc}).
Nevertheless, a natural extension of \cite[Theorem~\mbox{4.6.\Rnum{2}}]{Dickinson2011} does indeed hold, see Proposition~\ref{prop:x_neq_Kd_cup_minusKd}.
In addition, we investigate whether results provided in this paper for $\CP(\bbL^n)$ also hold for general $\CP(\bbK)$ (Proposition~\ref{prop:exposed_face_CP_K_cap_X} and Example~\ref{ex:not_face_CP_partialK_cap_X}).

The organization of this paper is as follows.
In Section~\ref{sec:preliminaries}, we introduce the notation and technical results used in this paper.
We investigate the facial structure of $\COP(\bbL^n)$ and $\CP(\bbL^n)$ in Sections~\ref{sec:COP} and~\ref{sec:CP}, respectively.
In Section~\ref{sec:implication}, we conclude this paper by discussing possible extensions of this and the existing studies~\cite{Dickinson2011,NL2025}.

\section{Preliminaries}\label{sec:preliminaries}
\subsection{Notation}
Let $V$ be a finite-dimensional real vector space with an inner product.
For a subset $S$ of $V$, we use $\setspan S$, $S^\perp$, $\setint S$, and $\partial S$ to use the smallest subspace containing $S$, the space of elements in $V$ that are orthogonal to every element in $S$, the interior of $S$, and the boundary of $S$.
For $x \in V$, we define
\begin{align*}
\bbR_+x &\coloneqq \{\alpha x\mid \alpha \ge 0\},\\
\bbR x &\coloneqq \{\alpha x\mid \alpha \in \bbR\}.
\end{align*}

All real vectors appearing in this paper are column vectors and denoted by boldface lowercase letters such as $\bm{a}$.
For $t\in \bbR$ and $\bm{v}\in \bbR^{n-1}$, we define $(t;\bm{v}) \in \bbR^n$ as the vector obtained by concatenating $t$ and $\bm{v}$ vertically.
Following this notation, we may write $\bm{x}\in \bbR^n$ as $(x_1;\bm{x}_{2:n})$.
We use $\bm{0}$ and $\bm{e}_i$ to denote the zero vector and the vector whose $i$th element is $1$ and the others are $0$.
The $n$-dimensional Euclidean space $\bbR^n$ is equipped with the standard inner product and the induced norm denoted by $\lVert\cdot \rVert$.
For $\bm{x}\in\bbR^n$, we write $\bm{x}\bm{x}^\top \in \bbS^n$ as $\bm{x}^{\otimes 2}$ for notational convenience.
Matrices are written in boldface uppercase letters such as $\bm{A}$.
For a matrix $\bm{A}$, the $(i,j)$th element of $\bm{A}$ is written as $A_{ij}$ or $A_{i,j}$.
The rank of $\bm{A}$ is denoted by $\rank(\bm{A})$.
The zero matrix is denoted by $\bm{O}$.
The space $\bbS^n$ of real $n\times n$ symmetric matrices is equipped with the inner product defined by $\langle\bm{A},\bm{B}\rangle \coloneqq \sum_{i,j=1}^n A_{ij}B_{ij}$.
We define $T_n \coloneqq \frac{n(n+1)}{2}$, which is the dimension of $\bbS^n$.

The following lemma provides a direct sum decomposition of the Euclidean space $\bbR^n$.
Its proof is elementary, so we omit it.
\begin{lemma}\label{lem:Peirce}
Let $\bm{v}\in\mathbb{R}^{n-1}$.
Then the Euclidean space $\mathbb{R}^n$ has the following orthogonal direct sum decomposition:
\begin{equation*}
\mathbb{R}^n = \mathbb{R}(1;\bm{v}) \oplus \mathbb{R}(\|\bm{v}\|^2;-\bm{v}) \oplus
\{(0;\bm{u})\mid \bm{u}^\top\bm{v} = 0\}.
\end{equation*}
\end{lemma}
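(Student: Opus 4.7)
The plan is to verify pairwise orthogonality of the three subspaces and then use a dimension count to conclude that their sum fills $\mathbb{R}^n$. Call the three subspaces $A = \mathbb{R}(1;\bm{v})$, $B = \mathbb{R}(\|\bm{v}\|^2;-\bm{v})$, and $C = \{(0;\bm{u})\mid \bm{u}^\top\bm{v} = 0\}$.

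First, I would compute the three relevant inner products. For $A$ and $B$, one gets $\langle (1;\bm{v}),(\|\bm{v}\|^2;-\bm{v})\rangle = \|\bm{v}\|^2 - \bm{v}^\top\bm{v} = 0$. For any $(0;\bm{u})\in C$, $\langle (1;\bm{v}),(0;\bm{u})\rangle = \bm{v}^\top\bm{u} = 0$ by the defining condition on $\bm{u}$, and similarly $\langle (\|\bm{v}\|^2;-\bm{v}),(0;\bm{u})\rangle = -\bm{v}^\top\bm{u} = 0$. Thus $A$, $B$, and $C$ are pairwise orthogonal.

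Next, I would count dimensions. If $\bm{v}\neq\bm{0}$, then $(1;\bm{v})$ and $(\|\bm{v}\|^2;-\bm{v})$ are nonzero, so $\dim A = \dim B = 1$; and $C$ consists of vectors of the form $(0;\bm{u})$ with $\bm{u}$ orthogonal to the nonzero vector $\bm{v}$ inside $\mathbb{R}^{n-1}$, giving $\dim C = n-2$. If $\bm{v} = \bm{0}$, then $B = \{\bm{0}\}$ has dimension $0$, but $C = \{(0;\bm{u})\mid \bm{u}\in\mathbb{R}^{n-1}\}$ has dimension $n-1$, while $\dim A = 1$. In both cases the dimensions sum to $n$.

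Since $A$, $B$, and $C$ are pairwise orthogonal subspaces of $\mathbb{R}^n$ whose dimensions sum to $n$, their sum is an orthogonal direct sum that equals all of $\mathbb{R}^n$. The only (very minor) obstacle is keeping track of the degenerate case $\bm{v}=\bm{0}$, which is handled by observing that the formulas still make sense and the dimensional accounting still works.
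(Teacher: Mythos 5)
Your proof is correct: the pairwise orthogonality computations and the dimension count (including the degenerate case $\bm{v}=\bm{0}$) are all accurate, and pairwise orthogonal subspaces with dimensions summing to $n$ indeed span $\mathbb{R}^n$ as an orthogonal direct sum. The paper explicitly omits its own proof of this lemma as elementary, and your argument is exactly the standard one it has in mind.
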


\subsection{Cones and their faces}
A subset $\bbK$ of a real inner product space $\bbV$ is called a \emph{cone} if $\alpha x \in \bbK$ for all $x\in \bbK$ and $\alpha \ge  0$.
For a cone $\bbK$, we use $\bbK^*$ to denote the set of $x\in \bbV$ such that the inner product of $x$ and $y$ is nonnegative for all $y\in \bbK$.
We call $\bbK$ \emph{pointed} if $\bbK \cap (-\bbK) = \{0\}$.
We define the dimension of $\bbK$, i.e., that of $\setspan \bbK$ as $\dim \bbK$.
Two convex cones $\bbK_1$ and $\bbK_2$ are said to be \emph{linearly isomorphic} if there exists a linear isomorphism $f\colon \setspan\bbK_1 \to \setspan\bbK_2$ such that $f(\bbK_1) = \bbK_2$.

Recall that the $n$-dimensional second-order cone $\bbL^n$ is defined as \eqref{eq:soc}.
Following the notation $\bm{x} = (x_1;\bm{x}_{2:n})$, we have
\begin{equation*}
\bbL^n = \{\bm{x}\in \bbR^n \mid x_1 \ge \lVert\bm{x}_{2:n}\rVert\}.
\end{equation*}
Its boundary can be described as
\begin{equation}
\partial\bbL^n = \{\bm{x}\in \bbR^n \mid x_1 = \lVert\bm{x}_{2:n}\rVert\}. \label{eq:bdry_Ln}
\end{equation}%
We use $\bbS_+^n$ to denote the cone of positive semidefinite matrices in $\bbS^n$ and call it the positive semidefinite cone.
Furthermore, the copositive and completely positive cones defined as \eqref{eq:COP} and \eqref{eq:CP} are indeed cones, to which we will return in Section~\ref{subsec:COP_CP}.

A nonempty convex subcone $\bbF$ of a closed convex cone $\bbK$ is termed a \emph{face} of $\bbK$ if for all $a,b\in \bbK$, if $a + b\in \bbF$, then $a,b\in \bbF$ holds.
A face $\bbF \subseteq \bbK$ is said to be \emph{maximal}, if $\bbF \neq \bbK$ and for every face $\hat \bbF$ of $\bbK$ we have that the inclusion $\bbF \subsetneq \hat \bbF$ implies that $\hat \bbF =\bbK $.
That is, there are no ``intermediary'' faces between $\bbF$ and $\bbK$.
An \emph{extreme ray} of $\bbK$ is a $1$-dimensional face of $\bbK$.
We say that an element $x \in \bbK$ \emph{generates} an extreme ray if $\bbR_+ x$ is an extreme ray of $\bbK$.
We write $\Ext \bbK$ for the set of elements generating extreme rays of $\bbK$.
A face $\bbF$ of $\bbK$ is said to be \emph{exposed} if $\bbF = \{x\}^\perp \cap \bbK$ for some $x \in \bbK^*$.
An extreme ray that is exposed is called an \emph{exposed ray}.
A \emph{chain of faces} of $\bbK$ with length $l$ is a sequence $\bbF_l \subsetneq \cdots \subsetneq \bbF_1$ such that each $\bbF_i$ is a face of $\bbK$.
Note that $\dim \bbF_l < \cdots < \dim \bbF_1$ holds since the inclusions are strict, see~\cite[Corollary~\mbox{18.1.3}]{Rockafellar1970}.
The length of a longest chain of faces of $\bbK$ is denoted by $\ell_{\bbK}$.
The \emph{distance to polyhedrality} of $\bbK$, denoted by $\ell_{\rm poly}(\bbK)$, is the maximum length of a chain of faces $\bbF_l \subsetneq \cdots \subsetneq \bbF_1$ of $\bbK$ such that the face $\bbF_l$ is not polyhedral.
Since a subface of a polyhedral cone is also polyhedral~\cite[page~\mbox{172}]{Rockafellar1970}, the non-polyhedrality of the face $\bbF_l$ is equivalent to that of all the faces $\bbF_1,\dots,\bbF_l$.
Note that any face whose dimension does not exceed $2$ is polyhedral; see, for example, \cite[Exercise~\mbox{1.65}]{SB2021}.

In this paper, we make extensive use of the facial structure of $\bbL^n$ which is summarized in the following lemma.

\begin{lemma}[{\cite[Example~3.2.3]{Pataki2000}}]\label{lem:soc_faces}
The second-order cone $\bbL^n$ has the following three types of faces.
\begin{enumerate}[(i)]
\item The cone $\bbL^n$.
\item Extreme rays generated by the elements of $\partial\bbL^n \setminus \{\bm{0}\}$.
\item The singleton $\{\bm{0}\}$.
\end{enumerate}
\end{lemma}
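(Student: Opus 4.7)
The plan is to verify that the three sets listed are faces and then show that no other faces exist, using the explicit description $\partial\bbL^n = \{\bm{x} \mid x_1 = \lVert \bm{x}_{2:n}\rVert\}$ from \eqref{eq:bdry_Ln} together with the triangle inequality as the main tool.

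For the first step, that $\bbL^n$ and $\{\bm{0}\}$ are faces is immediate from the definition. The nontrivial verification concerns extreme rays of type (ii). Given $\bm{x} = (x_1;\bm{x}_{2:n}) \in \partial\bbL^n\setminus\{\bm{0}\}$, suppose $\bm{x} = \bm{a} + \bm{b}$ with $\bm{a}=(a_1;\bm{a}_{2:n}), \bm{b}=(b_1;\bm{b}_{2:n}) \in \bbL^n$. The chain
\begin{equation*}
a_1 + b_1 = x_1 = \lVert\bm{x}_{2:n}\rVert = \lVert\bm{a}_{2:n} + \bm{b}_{2:n}\rVert \le \lVert\bm{a}_{2:n}\rVert + \lVert\bm{b}_{2:n}\rVert \le a_1 + b_1
\end{equation*}
forces equality throughout, which (since $\bm{x}_{2:n}\neq \bm{0}$) yields $\bm{a}_{2:n} = \lambda \bm{x}_{2:n}$ and $\bm{b}_{2:n} = (1-\lambda)\bm{x}_{2:n}$ with $\lambda \in [0,1]$, and $a_1 = \lambda x_1$, $b_1 = (1-\lambda)x_1$. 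Hence $\bm{a},\bm{b} \in \bbR_+ \bm{x}$, confirming that $\bbR_+ \bm{x}$ is a face.

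For the second step, let $\bbF$ be any face of $\bbL^n$ other than $\{\bm{0}\}$ and $\bbL^n$; we show $\bbF$ must be an extreme ray of type (ii). Since $\bbF$ is a proper face of a closed convex cone, $\bbF \subseteq \partial \bbL^n$ (a face containing a relative interior point of $\bbL^n$ must coincide with $\bbL^n$). Pick any $\bm{x} \in \bbF\setminus\{\bm{0}\}$, so $x_1 = \lVert\bm{x}_{2:n}\rVert > 0$. I will show $\bbF = \bbR_+ \bm{x}$. Suppose for contradiction that there exists $\bm{y} \in \bbF$ with $\bm{y} \notin \bbR\bm{x}$. Then $\bm{x}$ and $\bm{y}$ are linearly independent boundary points of $\bbL^n$, and by the equality case of the triangle inequality $\lVert\bm{x}_{2:n}+\bm{y}_{2:n}\rVert = \lVert\bm{x}_{2:n}\rVert + \lVert\bm{y}_{2:n}\rVert$ would force $\bm{y}_{2:n}$ to be a nonnegative multiple of $\bm{x}_{2:n}$, and hence $\bm{y}$ a nonnegative multiple of $\bm{x}$, contradicting linear independence. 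Therefore the inequality is strict and $\bm{x} + \bm{y} \in \setint \bbL^n \cap \bbF$, forcing $\bbF = \bbL^n$, a contradiction. Thus $\bbF \subseteq \bbR\bm{x} \cap \bbL^n = \bbR_+ \bm{x}$, and so $\bbF = \bbR_+ \bm{x}$ is a ray of type (ii).

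The main obstacle is really just the careful handling of the equality case in the triangle inequality, which does the work in both directions: it shows boundary rays are faces (by pinning $\bm{a},\bm{b}$ to $\bbR_+ \bm{x}$), and it shows that two linearly independent boundary points generate an interior direction (so no higher-dimensional proper face can exist). Everything else is bookkeeping, and no deeper structural fact about $\bbL^n$ beyond its quadratic defining inequality is needed.
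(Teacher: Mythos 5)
Your proof is correct. Note, however, that the paper does not prove this lemma at all: it is quoted directly from Pataki's survey (Example~3.2.3 of the cited reference), so your elementary argument via the equality case of the triangle inequality is a self-contained substitute for that citation rather than a variant of an in-paper argument. Two small points of bookkeeping you gloss over: first, the paper's definition of a face requires that $\bm{a}+\bm{b}\in\bbR_+\bm{x}$ (any point of the ray) imply $\bm{a},\bm{b}\in\bbR_+\bm{x}$, whereas you only treat $\bm{a}+\bm{b}=\bm{x}$; this is harmless since for $\bm{a}+\bm{b}=t\bm{x}$ with $t>0$ the same computation applies to $t\bm{x}\in\partial\bbL^n\setminus\{\bm{0}\}$, and for $t=0$ pointedness of $\bbL^n$ gives $\bm{a}=\bm{b}=\bm{0}$, but it should be said. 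Second, the fact that a face meeting $\setint\bbL^n$ must equal $\bbL^n$ (which you invoke twice, and which also disposes of your first use, $\bbF\subseteq\partial\bbL^n$) deserves a one-line justification from the paper's face definition: if $\bm{z}\in\bbF\cap\setint\bbL^n$ and $\bm{w}\in\bbL^n$, then $\bm{z}=(\bm{z}-\varepsilon\bm{w})+\varepsilon\bm{w}$ with both summands in $\bbL^n$ for small $\varepsilon>0$, so $\bm{w}\in\bbF$. With these two remarks inserted, your argument is a complete and correct proof, and it is arguably more elementary than deriving the classification from the general facial theory in the cited source, at the cost of being specific to the quadratic description of $\bbL^n$.
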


There is a one-to-one correspondence between the faces of the positive semidefinite cone $\bbS_+^n$ and the subspaces of $\bbR^n$, which can be described in many ways,
see \cite[Theorem~4]{BC1975}, \cite[Corollary~\mbox{12.4}]{Barvinok2002}, \cite[Section~\mbox{4.2.2}]{HM2012}, \cite[Corollary~\mbox{6.1}]{Lewis1997}, \cite[Example~4]{Orlitzky2021}, and so on.
In this paper, we adopt the representation described in \cite[Section~\mbox{4.2.2}]{HM2012} and state it in the following lemma.
For a subspace $\bbX$ of $\bbR^n$, we define
\begin{equation*}
\bbS_+(\bbX)\coloneqq\\
 \left\{\sum_{i=1}^k \bm{a}_i^{\otimes 2} \relmiddle| \text{$k$ is a positive integer and } \bm{a}_i \in \bbX \text{ for all $i = 1,\dots,k$}\right\}.
\end{equation*}

\begin{lemma}\label{lem:face_PSD}
Let $\bbX$ be a subspace of $\bbR^n$.
For any $d$-dimensional subspace $\bbY$ of $\bbX$, the set $\mathbb{S}_+(\bbY)$ is a $T_d$-dimensional face of $\mathbb{S}_+(\bbX)$ and linearly isomorphic to $\bbS_+^d$.
The isomorphism is the mapping $\phi\colon \setspan\bbS_+(\bbY) \to \bbS^d$ defined as
\begin{equation}
\phi(\bm{A}) \coloneqq \bm{P}^\top\bm{A}\bm{P}, \label{eq:isom_psd}
\end{equation}
where $\bm{P} \in \bbR^{n\times d}$ is the matrix whose columns consist of an orthonormal basis for $\bbY$.
Note that its inverse is $\phi^{-1}(\bm{S}) = \bm{P}\bm{S}\bm{P}^\top$ for each $\bm{S}\in \bbS^d$.
Conversely, for any face $\bbF$ of $\mathbb{S}_+(\mathbb{X})$, there exists a subspace $\mathbb{Y}$ of $\mathbb{X}$ such that $\bbF = \mathbb{S}_+(\mathbb{Y})$.
\end{lemma}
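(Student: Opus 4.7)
The plan is to reduce everything to the classical case $\bbX=\bbR^d$ via the linear isomorphism $\phi$, since for that case the statement (that faces of $\bbS_+^d$ correspond to subspaces of $\bbR^d$) is already available from the references cited. Concretely, I would first verify that, with $\bm{P}\in\bbR^{n\times d}$ having orthonormal columns spanning $\bbX$, the map $\bm{A}\mapsto \bm{P}^\top\bm{A}\bm{P}$ is a linear bijection from $\setspan\bbS_+(\bbX)$ onto $\bbS^d$ with inverse $\bm{S}\mapsto \bm{P}\bm{S}\bm{P}^\top$ (using $\bm{P}^\top\bm{P}=\bm{I}_d$), and that it sends $\bbS_+(\bbX)$ onto $\bbS_+^d$. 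Face structure and dimensions are preserved by linear isomorphisms of cones, so it suffices to prove the lemma for $\bbS_+^n$.

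Next, for the forward direction, I take a $d$-dimensional subspace $\bbY$ of $\bbX$ and let $\bm{Q}\in\bbR^{n\times d}$ collect an orthonormal basis for $\bbY$. Then $\bbS_+(\bbY)=\bm{Q}\bbS_+^d\bm{Q}^\top$, which by the isomorphism above has dimension $T_d$ and is linearly isomorphic to $\bbS_+^d$. To check that $\bbS_+(\bbY)$ is a face of $\bbS_+(\bbX)$, I would argue in the standard way: suppose $\bm{A},\bm{B}\in\bbS_+(\bbX)$ with $\bm{A}+\bm{B}\in\bbS_+(\bbY)$. Since both summands are positive semidefinite, a standard fact gives $\ker(\bm{A}+\bm{B})\subseteq \ker(\bm{A})\cap\ker(\bm{B})$, equivalently the column spaces of $\bm{A}$ and $\bm{B}$ lie in the column space of $\bm{A}+\bm{B}$, which is contained in $\bbY$. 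Hence $\bm{A},\bm{B}\in\bbS_+(\bbY)$.

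For the converse, given a face $\calF$ of $\bbS_+(\bbX)$, I would transport it through $\phi$ to a face $\phi(\calF)$ of $\bbS_+^d$. Invoking the classical classification of faces of $\bbS_+^d$ from the cited references (every face equals $\bbS_+(\bbY')$ for some subspace $\bbY'$ of $\bbR^d$), I obtain $\phi(\calF)=\bbS_+(\bbY')$, and applying $\phi^{-1}$ yields $\calF=\bm{P}\bbS_+(\bbY')\bm{P}^\top=\bbS_+(\bm{P}\bbY')$ where $\bm{P}\bbY'$ is a subspace of $\bbX$.

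The only subtle point will be being explicit that $\phi$ preserves the face lattice, which follows because it is a linear bijection between the two cones, and that $\bm{P}\bbS_+(\bbY')\bm{P}^\top = \bbS_+(\bm{P}\bbY')$, which reduces to the identity $\bm{P}(\bm{y}^{\otimes 2})\bm{P}^\top = (\bm{P}\bm{y})^{\otimes 2}$ applied termwise to the defining sums. Apart from this bookkeeping, the proof is a straightforward translation of the well-known PSD face characterization to its restriction to a subspace, so I do not expect any real obstacle beyond carefully setting up the isomorphism $\phi$ and ensuring the range conditions are preserved.
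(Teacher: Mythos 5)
Your proof is correct: the reduction to the classical classification of faces of $\bbS_+^d$ via the isometry $\phi(\bm{A})=\bm{P}^\top\bm{A}\bm{P}$, together with the kernel/range argument ($\ker(\bm{A}+\bm{B})=\ker\bm{A}\cap\ker\bm{B}$ for positive semidefinite summands, hence $\mathrm{range}(\bm{A}),\mathrm{range}(\bm{B})\subseteq\mathrm{range}(\bm{A}+\bm{B})\subseteq\bbY$) for the face property, is exactly the standard route. The paper does not prove this lemma at all --- it states it as the representation adopted from the cited references (e.g.\ \cite[Section~4.2.2]{HM2012}) --- so there is nothing to contrast with; your argument fills in precisely what those references contain, and the only point worth making explicit is that $\bbS_+(\bbY)$ coincides with the set of positive semidefinite matrices whose range lies in $\bbY$ (via the spectral decomposition), which you use implicitly in the final step of the face argument.
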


Because a subspace $\bbX$ of $\bbR^n$ is a closed cone, the set $\bbS_+(\bbX)$ equals the completely positive cone $\CP(\bbX)$.
However, to emphasize that $\bbS_+(\bbX)$ is a face of $\bbS_+^n$, we use the notation $\bbS_+(\bbX)$ rather than $\CP(\bbX)$.

The values $\ell_{\bbK}$ and $\ell_{\rm poly}(\bbK)$ have been computed for various cones, which we discuss in the following example.

\begin{example}\label{ex:length_faces}
For positive semidefinite cones, it is known that $\ell_{\bbS_+^n} = n + 1$~\cite[Theorem~\mbox{14}]{IL2017} and $\ell_{\rm poly}(\bbS_+^n) = n-1$~\cite[Example~1]{LMT2018}.
Recently, for the copositive cone $\COP(\bbR_+^n)$, the completely positive cone $\CP(\bbR_+^n)$, and some related cones in the space $\bbS^n$, Nishijima~\cite{Nishijima2024} showed that the length of a longest chain of faces is $T_n + 1$ and the distance to polyhedrality is $T_n-1$, which attain the worst cases possible for them.
\end{example}

As described in the following lemma, the distance to polyhedrality can be bounded in terms of the length of a longest chain of faces.

\begin{lemma}[{\cite[Theorem~\mbox{11}]{LMT2018}}]\label{lem:bound_lpolyK_lK}
If $\bbK$ is a closed convex cone that is not a subspace, then we have $\ell_{\rm poly}(\bbK) \le \ell_{\bbK} -2$.
\end{lemma}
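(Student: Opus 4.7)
My plan is to show directly that a longest non-polyhedral chain of faces inside $\bbK$ can be extended at the bottom by two strictly smaller faces still lying inside $\bbK$; this will give $\ell_{\bbK} \ge \ell_{\rm poly}(\bbK) + 2$, which is the required inequality.

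I will first dispose of the degenerate case $\ell_{\rm poly}(\bbK) = 0$, in which the statement reduces to $\ell_{\bbK} \ge 2$. Since $\bbK$ is assumed not to be a subspace, its lineality space $\bbK \cap (-\bbK)$ is a proper face of $\bbK$, so $\bbK \cap (-\bbK) \subsetneq \bbK$ already gives a chain of length two.

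For the main case, let $l \coloneqq \ell_{\rm poly}(\bbK) \ge 1$ and fix a chain $\bbF_l \subsetneq \cdots \subsetneq \bbF_1$ of faces of $\bbK$ attaining this length, so each $\bbF_i$ is non-polyhedral by the footnote to the definition. I focus on $\bbF_l$, set $L \coloneqq \bbF_l \cap (-\bbF_l)$, and note that $L$ is the minimal face of $\bbF_l$, hence also a face of $\bbK$ since faces of faces are faces. Because $\bbF_l$ is non-polyhedral it cannot be a subspace, so $L \subsetneq \bbF_l$. The key structural fact I would establish is the dimension gap $\dim \bbF_l - \dim L \ge 3$: the quotient cone $\bbF_l / L$ is pointed, polyhedrality of $\bbF_l$ is equivalent to polyhedrality of $\bbF_l / L$ (by pulling back the defining linear inequalities through the quotient map, and pushing them forward in the other direction), and any pointed closed convex cone of dimension at most two is polyhedral as noted in the preliminaries.

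Given that gap, I would apply the Krein--Milman theorem for pointed closed convex cones to $\bbF_l / L$, which has dimension at least three, to extract an extreme ray, and then pull it back through the quotient map to obtain a face $G$ of $\bbF_l$ with $\dim G = \dim L + 1$. The strict inequalities $\dim L < \dim G = \dim L + 1 \le \dim \bbF_l - 2 < \dim \bbF_l$ ensure that $L \subsetneq G \subsetneq \bbF_l \subsetneq \cdots \subsetneq \bbF_1$ is a genuine chain of faces of $\bbK$ of length $l+2$, which finishes the argument. The main technical obstacle is justifying the dimension gap $\dim \bbF_l - \dim L \ge 3$, since that is the only step where the non-polyhedrality hypothesis is used essentially; everything else is routine cone-theoretic bookkeeping.
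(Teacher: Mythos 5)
Your argument is correct. There is nothing internal to compare it against: the paper does not prove this lemma but simply quotes it from \cite[Theorem~11]{LMT2018}, so your write-up serves as a valid self-contained derivation, and its strategy (extend a chain attaining $\ell_{\rm poly}(\bbK)$ by two extra faces below the last, non-polyhedral member) is in the same spirit as the cited source. The degenerate case is handled correctly, since $\bbK\cap(-\bbK)\subsetneq\bbK$ exactly because $\bbK$ is not a subspace. In the main case, the two steps you label as routine do deserve explicit justification in a full write-up: (i) the identification of $\bbF_l/L$ with the pointed closed cone $\bbF_l\cap L^\perp$ via the orthogonal decomposition $\bbF_l = L \oplus (\bbF_l\cap L^\perp)$, which simultaneously gives the pointedness and closedness of the quotient, the dimension count $\dim\bbF_l = \dim L + \dim(\bbF_l\cap L^\perp)$, and the polyhedrality equivalence (pullback uses the saturation $\bbF_l + L = \bbF_l$, valid because $L$ is a subspace contained in the convex cone $\bbF_l$; pushforward uses that linear images of finitely generated cones are finitely generated); and (ii) the fact that the preimage of a face of $\bbF_l/L$ under the quotient map, equivalently $L+R$ for a face $R$ of $\bbF_l\cap L^\perp$, is again a face of $\bbF_l$, and hence of $\bbK$ by transitivity. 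The existence of an extreme ray of the nonzero pointed closed cone $\bbF_l\cap L^\perp$ follows from \cite[Corollary~18.5.2]{Rockafellar1970}, which the paper already invokes in the proof of Lemma~\ref{lem:basis_in_partialK}, and the dimension-gap claim $\dim\bbF_l - \dim L \ge 3$ is indeed forced by non-polyhedrality, since a closed convex cone of dimension at most $2$ is polyhedral, as noted in the preliminaries. With these details filled in, the chain $L \subsetneq G \subsetneq \bbF_l \subsetneq \cdots \subsetneq \bbF_1$ has length $\ell_{\rm poly}(\bbK)+2$, yielding $\ell_{\rm poly}(\bbK) \le \ell_{\bbK} - 2$ as required.
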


The following lemmas concern bases for subspaces of $\bbR^n$.
First, we show that if a subspace intersects with the interior of a convex cone, then we can take a basis for the subspace such that the basis is included in the interior of the cone.

\begin{lemma}\label{lem:basis_in_K}
Let $\bbK$ be a convex cone in $\bbR^n$ and $\bbX$ be a subspace of $\bbR^n$.
If $\bbX \cap \setint\bbK \neq \emptyset$, then there exist elements in $\bbX \cap \setint\bbK$ such that they compose a basis for $\bbX$.
\end{lemma}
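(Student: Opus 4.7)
The plan is to fix any point $\bm{x}_0 \in \bbX \cap \setint\bbK$ (which exists by assumption) together with any basis $\bm{v}_1, \ldots, \bm{v}_d$ of $\bbX$, where $d = \dim \bbX$, and then to produce the desired basis as a perturbation of $\bm{x}_0$ along these directions, namely $\{\bm{x}_0 + t \bm{v}_i\}_{i=1}^d$ for a suitably chosen small scalar $t > 0$. The membership part of the claim will follow from openness of $\setint\bbK$; the only genuinely nontrivial point is linear independence, which I will obtain by a determinant computation.

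For the membership step, I would argue that since $\setint\bbK$ is open in $\bbR^n$ and contains $\bm{x}_0$, there exists $t_0 > 0$ such that $\bm{x}_0 + t \bm{v}_i \in \setint\bbK$ for every $i = 1,\ldots,d$ and every $t \in (0, t_0]$. Moreover, each $\bm{x}_0 + t \bm{v}_i$ belongs to $\bbX$ because both $\bm{x}_0$ and $\bm{v}_i$ do. Hence, regardless of how $t \in (0,t_0]$ is chosen, all $d$ perturbed vectors lie in $\bbX \cap \setint\bbK$.

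To verify linear independence, I would write $\bm{x}_0 = \sum_{j=1}^d \beta_j \bm{v}_j$ in coordinates relative to $(\bm{v}_1,\ldots,\bm{v}_d)$; then the matrix whose $i$-th column is the coordinate vector of $\bm{x}_0 + t\bm{v}_i$ equals $t \bm{I}_d + \bm{\beta}\bm{1}^\top$, where $\bm{\beta} = (\beta_1,\dots,\beta_d)^\top$ and $\bm{1}$ is the all-ones vector. The matrix determinant lemma gives its determinant as $t^{d-1}\bigl(t + \sum_{j=1}^d \beta_j\bigr)$, which can vanish for at most one value of $t > 0$. Choosing $t \in (0, t_0]$ distinct from this value produces $d$ linearly independent elements of $\bbX \cap \setint\bbK$, which therefore form a basis of $\bbX$. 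The entire argument is essentially routine; the only subtle point is the linear-independence verification, and the determinant above is the cleanest way I see to dispatch it without resorting to a density-of-bases argument.
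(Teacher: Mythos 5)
Your proof is correct, and it takes a genuinely different route from the paper's. The paper's argument is a one-liner that invokes a general fact about convex sets (Soltan's theorem that $\setspan(C_1 \cap C_2) = \setspan C_1 \cap \setspan C_2$ whenever the relative interiors of the convex sets $C_1, C_2$ meet), applied to $C_1 = \bbX$ and $C_2 = \setint\bbK$, to conclude directly that $\setspan(\bbX \cap \setint\bbK) = \bbX$ and hence that a basis can be extracted. Your argument is instead a self-contained, explicit construction: perturb a single interior point $\bm{x}_0$ along a fixed basis of $\bbX$, use openness of $\setint\bbK$ for membership, and verify linear independence via the determinant $\det(t\bm{I}_d + \bm{\beta}\bm{1}^\top) = t^{d-1}(t + \sum_j \beta_j)$, which vanishes for at most one positive $t$. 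Both proofs are complete; yours trades the citation for a short computation and is in fact slightly more general, since it never uses convexity or the cone property of $\bbK$ — only that $\setint\bbK$ is open — whereas the paper's appeal to Soltan's theorem does rely on convexity. The paper's route has the advantage of brevity and of reusing machinery already cited elsewhere; yours has the advantage of being elementary and exhibiting the basis concretely.
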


\begin{proof}
By the assumption of $\bbX \cap \setint\bbK \neq \emptyset$, we see that the intersection of the relative interior of $\bbX$ and that of $\setint\bbK$ is nonempty.
In addition, the sets $\bbX$ and $\setint\bbK$ are convex.
Therefore, it follows from \cite[Theorem~\mbox{3.38}]{Soltan2020} that
\begin{equation*}
\setspan(\bbX \cap \setint\bbK) = \setspan\bbX \cap \setspan(\setint\bbK) = \bbX,
\end{equation*}
so we have the desired result.
\end{proof}

Next, we show that if a subspace intersects with the interior of a pointed closed convex cone, then we can take a basis for the subspace composed of elements in the boundary of the cone.

\begin{lemma}\label{lem:basis_in_partialK}
Let $\bbK$ be a pointed closed convex cone in $\bbR^n$ and $\bbX$ be a subspace of $\bbR^n$ whose dimension is greater than or equal to $2$.
If $\bbX \cap \setint\bbK \neq \emptyset$, then there exist elements in $\bbX \cap \partial\bbK$ such that they compose a basis for $\bbX$.
\end{lemma}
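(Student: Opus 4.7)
The plan is to work inside $\bbX$ by studying the restricted cone $\tilde{\bbK} \coloneqq \bbX \cap \bbK$, which inherits all the structural properties of $\bbK$ needed to extract boundary basis vectors. Specifically, $\tilde{\bbK}$ is a pointed (as a subcone of a pointed cone) closed convex cone in $\bbX$. Moreover, the hypothesis $\bbX \cap \setint\bbK \neq \emptyset$ together with the standard convex analysis identity $\ri(\bbX \cap \bbK) = \ri\bbX \cap \ri\bbK$ (valid whenever the right-hand side is nonempty) yields $\ri(\tilde{\bbK}) = \bbX \cap \setint\bbK \neq \emptyset$, so $\tilde{\bbK}$ has full dimension $k \coloneqq \dim \bbX$ in $\bbX$.

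Next, I would identify the relative boundary $\partial_{\bbX}\tilde{\bbK}$ (in $\bbX$) with a subset of $\bbX \cap \partial\bbK$. If $y \in \tilde{\bbK} \setminus \ri(\tilde{\bbK})$, then $y \notin \bbX \cap \setint\bbK$, so $y \in \bbK \setminus \setint\bbK = \partial\bbK$. Hence any element on the relative boundary of $\tilde{\bbK}$ automatically lies in $\bbX \cap \partial\bbK$, which is exactly the set from which we need to draw our basis.

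Finally, I would invoke the classical Minkowski-type representation theorem: a pointed closed convex cone in a finite-dimensional space equals the conic hull of its extreme rays. Applied to $\tilde{\bbK}$ inside $\bbX$, this gives $\tilde{\bbK} = \cone(\Ext \tilde{\bbK})$. Because $\tilde{\bbK}$ has full dimension $k$ in $\bbX$, the generators of its extreme rays cannot all lie in a proper subspace of $\bbX$; thus they span $\bbX$, and we can select $k$ linearly independent generators $z_1,\dots,z_k$. Since $k \geq 2$, no generator of an extreme ray can lie in $\ri(\tilde{\bbK})$ (an interior point would admit a two-sided perturbation inside $\tilde{\bbK}$, contradicting extremality), so each $z_i \in \partial_{\bbX}\tilde{\bbK} \subseteq \bbX \cap \partial\bbK$, producing the desired basis.

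The only delicate step is the second paragraph, where one must be careful to distinguish the relative boundary of $\tilde{\bbK}$ inside $\bbX$ from the topological boundary of $\bbK$ in $\bbR^n$; everything else is a direct application of named results. I do not anticipate any serious obstacle—the hypothesis $\dim \bbX \geq 2$ is used solely to ensure that the extreme ray generators themselves lie on the relative boundary rather than coinciding with an interior generator (which is only a possibility in the degenerate one-dimensional case, where $\tilde{\bbK}$ is itself a ray).
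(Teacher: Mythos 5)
Your proposal is correct and follows essentially the same route as the paper's proof: both arguments show that the generators of extreme rays of $\bbX \cap \bbK$ must lie in $\bbX \cap \partial\bbK$ (via the same two-sided perturbation argument, which is where $\dim\bbX \ge 2$ enters), and then invoke the Minkowski-type theorem that a pointed closed convex cone is the conical hull of its extreme rays, together with the full-dimensionality of $\bbX \cap \bbK$ in $\bbX$ coming from $\bbX \cap \setint\bbK \neq \emptyset$, to conclude that these generators span $\bbX$. The only cosmetic difference is that you phrase the boundary containment through the relative interior identity $\ri(\bbX \cap \bbK) = \bbX \cap \setint\bbK$, whereas the paper argues directly with $\setint\bbK$ and $\partial\bbK$.
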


\begin{proof}
First, we show that $\Ext(\bbX \cap \bbK) \subseteq \bbX \cap \partial\bbK$.
We assume that there exists $\bm{x} \in \Ext(\bbX \cap \bbK)$ such that $\bm{x} \not\in \bbX \cap \partial\bbK$.
It must follow that $\bm{x} \in \setint\bbK$.
Let $\bm{d} \in \bbX$ be a nonzero vector that is orthogonal to $\bm{x}$ and satisfies $\bm{x} \pm \bm{d} \in \bbX \cap \bbK$.
(Here, we use the assumption that $\dim\bbX \ge 2$.)
Then we have $(\bm{x} + \bm{d}) + (\bm{x} - \bm{d}) = 2\bm{x} \in \bbR_+\bm{x}$.
It follows from $\bm{x} \in \Ext(\bbX \cap \bbK)$ that $\bm{x} \pm \bm{d} \in \bbR_+\bm{x}$.
Therefore, there exists $\alpha \ge 0$ such that $\bm{x} + \bm{d} = \alpha\bm{x}$.
Taking the inner product of $\bm{d}$ with this yields $\lVert\bm{d}\rVert^2 = 0$, which is a contradiction.

Next, we show that $\setspan(\bbX \cap \partial\bbK) = \setspan(\bbX \cap \bbK)$.
Let $\bm{x} \in \setspan(\bbX \cap \bbK)$.
Then there exist a positive integer $k$, $c_1,\dots,c_k \in \bbR$, and $\bm{x}_1,\dots,\bm{x}_k \in \bbX \cap \bbK$ such that $\bm{x} = \sum_{i=1}^kc_i\bm{x}_i$.
We note that $\bbX \cap \bbK$ is a pointed closed convex cone containing points other than the origin by assumption.
Therefore, it follows from \cite[Corollary~\mbox{18.5.2}]{Rockafellar1970} that $\bbX \cap \bbK$ is the conical hull of $\Ext(\bbX \cap \bbK)$.
For each $i = 1,\dots,k$, there exist a positive integer $k_i$, $c_{i1},\dots,c_{ik_i} \in \bbR_+$, and $\bm{x}_{i1},\dots,\bm{x}_{ik_i} \in \Ext(\bbX \cap \bbK)$, which are inluded in $\bbX \cap \partial\bbK$ by the above discussion, such that $\bm{x}_i = \sum_{j=1}^{k_i}c_{ij}\bm{x}_{ij}$.
Hence, we see that
\begin{equation*}
\bm{x} = \sum_{i=1}^k\sum_{j=1}^{k_i}c_ic_{ij}\bm{x}_{ij} \in \setspan(\bbX \cap \partial\bbK).
\end{equation*}

Therefore, it follows that $\setspan(\bbX \cap \partial\bbK) = \setspan(\bbX \cap \bbK) = \bbX$, where we use the assumption and Lemma~\ref{lem:basis_in_K} to derive the second equation.
\end{proof}

\subsection{Copositive and completely positive cones}\label{subsec:COP_CP}
Recall that the copositive cone $\COP(\bbK)$ and the completely positive cone $\CP(\bbK)$ over a closed cone $\bbK$ in the Euclidean space $\bbR^n$ are denoted by \eqref{eq:COP} and \eqref{eq:CP}, respectively.
The copositive cone $\COP(\bbK)$ and the completely positive cone $\CP(\bbK)$ are dual to each other~\cite[Section~2]{SZ2003}.

In what follows, we make extensive use of the following lemma.

\begin{lemma}\label{lem:HV}
For a nonempty finite subset $\calV$ of $\bbR^n$, we define $\bm{H}(\calV) \coloneqq \sum_{\bm{h}\in \calV}\bm{h}^{\otimes 2}$.
Let $\bbK$ be a closed cone in $\bbR^n$.
Then we have the following statements.
\begin{enumerate}[(i)]
\item $\bm{H}(\calV)$ is positive semidefinite.
In particular, $\bm{H}(\calV) \in \COP(\bbK)$ holds. \label{enum:MV_psd}
\item If $\calV \subseteq \bbK$, then $\bm{H}(\calV) \in \CP(\bbK)$.\label{enum:MV_cp}
\item Let $\bm{A}\in \CP(\bbK)$.
Then $\langle\bm{A},\bm{H}(\calV)\rangle = 0$ holds if and only if $\bm{A}\in \CP(\calV^\perp \cap \bbK)$.\label{enum:AinCP_perp}
\item Let $\bm{A}\in \COP(\bbK)$, $\calV \subseteq \bbK$, and $\calV \cap \setint\bbK \neq \emptyset$.
Then $\langle\bm{A},\bm{H}(\calV)\rangle = 0$ holds if and only if $\bm{A}\in \bbS_+(\calV^\perp)$.\label{enum:AinCOP_perp}
\end{enumerate}
\end{lemma}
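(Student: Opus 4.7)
The plan is to address the four items in order, since they all rest on the same identity $\langle \bm{a}\bm{a}^\top, \bm{h}\bm{h}^\top\rangle = (\bm{a}^\top\bm{h})^2$ governing how $\bm{H}(\calV)$ interacts with rank-one matrices.

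First, for (\ref{enum:MV_psd}) I would note that each $\bm{h}^{\otimes 2}$ is rank-one positive semidefinite, so $\bm{H}(\calV)\in \bbS_+^n$ as a sum of such matrices; since $\bbS_+^n\subseteq \COP(\bbK)$ for any closed cone $\bbK$, the copositivity assertion is then automatic. Part (\ref{enum:MV_cp}) is a direct consequence of the definition \eqref{eq:CP} once one notes $\calV\subseteq \bbK$. For (\ref{enum:AinCP_perp}), given any representation $\bm{A}=\sum_i \bm{a}_i^{\otimes 2}$ with $\bm{a}_i\in \bbK$, I would expand
\begin{equation*}
\langle \bm{A}, \bm{H}(\calV)\rangle = \sum_{i}\sum_{\bm{h}\in \calV}(\bm{a}_i^\top \bm{h})^2,
\end{equation*}
and observe that the vanishing of this sum of squares is equivalent to $\bm{a}_i\in \calV^\perp\cap \bbK$ for every $i$, which rearranges $\bm{A}$ as a member of $\CP(\calV^\perp\cap \bbK)$; the reverse implication uses the same identity.

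The substantive content is (\ref{enum:AinCOP_perp}). The forward direction is just another application of the sum-of-squares rewriting. For the converse, my plan is first to expand $\langle \bm{A}, \bm{H}(\calV)\rangle=\sum_{\bm{h}\in \calV}\bm{h}^\top \bm{A}\bm{h}$; since $\bm{A}\in \COP(\bbK)$ and $\calV\subseteq \bbK$, each summand is nonnegative, so the hypothesis forces $\bm{h}^\top \bm{A}\bm{h}=0$ for every $\bm{h}\in \calV$. From there, the goal is to upgrade $\bm{A}$ from merely copositive to positive semidefinite, and simultaneously verify $\calV\subseteq \Ker \bm{A}$, after which a spectral decomposition will express $\bm{A}$ as a sum of rank-one matrices supported in $\calV^\perp$, giving $\bm{A}\in \bbS_+(\calV^\perp)$.

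The positive semidefiniteness upgrade is where the hypothesis $\calV\cap \setint \bbK\neq \emptyset$ becomes essential, and this will be the main obstacle. Fixing $\bm{h}^\ast\in \calV\cap \setint \bbK$ and an arbitrary direction $\bm{y}\in \bbR^n$, interiority allows $\bm{h}^\ast+t\bm{y}\in \bbK$ on a \emph{two-sided} neighborhood of $t=0$. Therefore the scalar quadratic
\begin{equation*}
p(t) = (\bm{h}^\ast + t\bm{y})^\top \bm{A}(\bm{h}^\ast + t\bm{y}) = 2t(\bm{y}^\top \bm{A}\bm{h}^\ast) + t^2(\bm{y}^\top \bm{A}\bm{y})
\end{equation*}
is nonnegative near $0$ with $p(0)=0$; its linear coefficient must vanish, giving $\bm{A}\bm{h}^\ast=0$, and its quadratic coefficient must be nonnegative, giving $\bm{y}^\top \bm{A}\bm{y}\ge 0$ for all $\bm{y}\in \bbR^n$. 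Once $\bm{A}$ is known to be positive semidefinite, the standard implication $\bm{h}^\top \bm{A}\bm{h}=0\Rightarrow \bm{A}\bm{h}=\bm{0}$ applies at each $\bm{h}\in \calV$, so $\calV\subseteq \Ker \bm{A}$ and the range of $\bm{A}$ sits inside $\calV^\perp$, completing the argument via the spectral decomposition. Without the interior assumption one would only have $p(t)\ge 0$ on the one-sided interval $t\ge 0$, which is not enough to kill the linear coefficient or to conclude positive semidefiniteness; this is precisely why the hypothesis $\calV\cap \setint\bbK\neq \emptyset$ cannot be dropped.
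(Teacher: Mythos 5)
Your proposal is correct and follows essentially the same route as the paper: items (i)--(iii) via the identity $\langle\bm{a}^{\otimes 2},\bm{h}^{\otimes 2}\rangle=(\bm{a}^\top\bm{h})^2$, and the hard direction of (iv) by perturbing an interior point $\bm{h}^\ast\in\calV\cap\setint\bbK$ two-sidedly to kill the linear term and extract $\bm{x}^\top\bm{A}\bm{x}\ge 0$, which is exactly the paper's argument. The only cosmetic difference is that you finish (iv) through $\calV\subseteq\Ker\bm{A}$ and the spectral decomposition, whereas the paper reapplies the sum-of-squares identity to an arbitrary rank-one decomposition of the now positive semidefinite $\bm{A}$; the two are interchangeable.
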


\begin{proof}
It follows from the definition of $\bm{H}(\calV)$ that \eqref{enum:MV_psd} and \eqref{enum:MV_cp} hold.
We prove \eqref{enum:AinCP_perp}.
By the assumption of $\bm{A}\in \CP(\bbK)$, there exist $\bm{a}_1,\dots,\bm{a}_k \in \bbK$ such that $\bm{A} = \sum_{i=1}^k\bm{a}_i^{\otimes 2}$.
If $\langle\bm{A},\bm{H}(\calV)\rangle = 0$ holds, then we have $\sum_{i=1}^k\sum_{\bm{h}\in\calV}(\bm{a}_i^\top\bm{h})^2 = 0$.
This implies that $\bm{a}_i^\top\bm{h} = 0$ for all $i = 1,\dots,k$ and $\bm{h}\in \calV$.
Therefore, $\bm{a}_i \in \calV^\perp$ for each $i = 1,\dots,k$, so we obtain $\bm{A}\in \CP(\calV^\perp \cap \bbK)$.
The converse is straightforward.

Next, we prove \eqref{enum:AinCOP_perp}.
Let $\bm{A}\in \COP(\bbK)$.
We assume that $\calV \subseteq \bbK$, $\calV \cap \setint\bbK \neq \emptyset$, and $\langle\bm{A},\bm{H}(\calV)\rangle = 0$.
Then we have
\begin{equation}
\sum_{i=1}^k\sum_{\bm{h}\in\calV}\bm{h}^\top\bm{A}\bm{h} = 0. \label{eq:A_dot_MV}
\end{equation}
Since $\bm{A} \in \COP(\bbK)$ and $\calV \subseteq \bbK$, the value $\bm{h}^\top\bm{A}\bm{h}$ is nonnegative for all $\bm{h}\in \calV$.
Combining this with \eqref{eq:A_dot_MV} yields $\bm{h}^\top\bm{A}\bm{h} = 0$ for all $\bm{h}\in \calV$.
In particular, for an arbitrary element $\bm{h}_0 \in \calV \cap \setint\bbK$, we have $\bm{h}_0^\top\bm{A}\bm{h}_0 = 0$.
This implies that $\bm{A}$ is positive semidefinite.
To see this, fix $\bm{x} \in \bbR^n$ arbitrarily.
By $\bm{h}_0 \in \setint\bbK$, there exists $\delta > 0$ such that $\bm{h}_0 \pm t\bm{x} \in \bbK$ for all $t \in (0,\delta]$.
It follows from $\bm{A} \in \COP(\bbK)$ and $\bm{h}_0^\top\bm{A}\bm{h}_0 = 0$ that
\begin{equation}
0 \le (\bm{h}_0 \pm t\bm{x})^\top\bm{A}(\bm{h}_0 \pm t\bm{x}) = \pm2t\bm{h}_0^\top\bm{A}\bm{x} + t^2\bm{x}^\top\bm{A}\bm{x}. \label{eq:proof_A_psd}
\end{equation}
Dividing both sides of \eqref{eq:proof_A_psd} by $2t$ and letting $t \downarrow 0$, we have $\pm\bm{h}_0^\top\bm{A}\bm{x} \ge 0$, i.e., $\bm{h}_0^\top\bm{A}\bm{x}$ = 0.
Substituting this into \eqref{eq:proof_A_psd} and dividing it by $t^2$, we obtain $\bm{x}^\top\bm{A}\bm{x} \ge 0$.
Since $\bm{x} \in \bbR^n$ is arbitrary, we see that $\bm{A}$ is positive semidefinite.\footnote{The proof of the positive semidefiniteness of $\bm{A}$ follows \cite[Theorem~5]{Diananda1961} and \cite[Theorem~\mbox{1.1.\Rnum{6}}]{Dickinson2013}, which proved a similar result in the case where $\bbK$ is a nonnegative orthant.}
We write $\bm{A}$ as $\sum_{i=1}^k\bm{a}_i^{\otimes 2}$ for some $\bm{a}_1,\dots,\bm{a}_k \in \bbR^n$.
Then, by \eqref{eq:A_dot_MV}, we have $\sum_{i=1}^k\sum_{\bm{h}\in \calV}(\bm{a}_i^\top\bm{h})^2 = 0$.
This implies that $\bm{a}_i \in \calV^\perp$ for each $i = 1,\dots,k$, so we obtain $\bm{A}\in \bbS_+(\calV^\perp)$.
The converse is straightforward.
\end{proof}

In the following lemma, we describe the exposed rays of $\CP(\bbK)$.

\begin{lemma}\label{lem:exp_ext_closed_cone}
Let $\mathbb{K}$ be a nonzero closed cone in $\bbR^n$.
Then the matrices $\bm{a}^{\otimes 2}$ with $\bm{a}\in\mathbb{K} \setminus \{\bm{0}\}$ are exactly the generators of extreme rays of $\CP(\bbK)$.
In addition, all the extreme rays are exposed.
\end{lemma}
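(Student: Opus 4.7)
The plan is to prove the lemma in three parts: (i) every $\bm{a}^{\otimes 2}$ with $\bm{a}\in\bbK\setminus\{\bm{0}\}$ generates an extreme ray; (ii) these exhaust the generators; (iii) each such ray is exposed. Throughout, the key structural fact is the sandwich
\begin{equation*}
\CP(\bbK) \;\subseteq\; \bbS_+^n \;\subseteq\; \COP(\bbK),
\end{equation*}
where the first inclusion is immediate from Lemma~\ref{lem:HV}\eqref{enum:MV_psd} (or directly from the definition of $\CP(\bbK)$), and the second holds because any PSD matrix $\bm{S}$ satisfies $\bm{x}^\top\bm{S}\bm{x}\ge 0$ for every $\bm{x}\in\bbR^n$, hence for every $\bm{x}\in\bbK$.

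For (i), suppose $\bm{a}^{\otimes 2} = \bm{B}+\bm{C}$ with $\bm{B},\bm{C}\in\CP(\bbK)$. Both $\bm{B}$ and $\bm{C}$ are PSD and satisfy $\bm{0}\preceq \bm{B}\preceq \bm{a}^{\otimes 2}$, so the range of $\bm{B}$ is contained in the range of $\bm{a}^{\otimes 2}$, namely $\bbR\bm{a}$. It follows that $\bm{B} = \beta\bm{a}^{\otimes 2}$ for some $\beta\ge 0$, and analogously for $\bm{C}$; this is the standard rank-one argument in $\bbS_+^n$.

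For (ii), let $\bm{A}\neq\bm{O}$ generate an extreme ray of $\CP(\bbK)$. By definition of $\CP(\bbK)$ we can write $\bm{A} = \sum_{i=1}^{k}\bm{a}_i^{\otimes 2}$ with $\bm{a}_i\in\bbK$. Since each $\bm{a}_i^{\otimes 2}\in\CP(\bbK)$ and $\bbR_+\bm{A}$ is a face, each summand must lie in $\bbR_+\bm{A}$. Picking any $i$ with $\bm{a}_i\neq\bm{0}$ gives $\bm{a}_i^{\otimes 2} = \lambda\bm{A}$ for some $\lambda>0$, so $\bm{A} = \bm{a}^{\otimes 2}$ with $\bm{a} \coloneqq \lambda^{-1/2}\bm{a}_i\in\bbK\setminus\{\bm{0}\}$, using that $\bbK$ is a cone.

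For (iii), I would exhibit the exposing functional $\bm{X}\coloneqq \bm{I} - \|\bm{a}\|^{-2}\bm{a}\bm{a}^\top$, the orthogonal projector onto $(\bbR\bm{a})^\perp$. Being PSD, $\bm{X}$ lies in $\bbS_+^n\subseteq\COP(\bbK) = \CP(\bbK)^*$, and $\langle\bm{X},\bm{a}^{\otimes 2}\rangle = 0$ is immediate. Conversely, any $\bm{B}\in\CP(\bbK)$ with $\langle\bm{X},\bm{B}\rangle = 0$ is PSD with $\tr(\bm{X}\bm{B}) = 0$, which forces the range of $\bm{B}$ into $\ker\bm{X} = \bbR\bm{a}$, so $\bm{B}\in\bbR_+\bm{a}^{\otimes 2}$. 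Hence $\bbR_+\bm{a}^{\otimes 2} = \{\bm{X}\}^\perp\cap\CP(\bbK)$, proving exposedness. There is no real obstacle here: the whole argument is an exercise in transferring the rank-one facial structure of $\bbS_+^n$ through the sandwich above, and the only point needing attention is confirming that the exposing functional chosen from $\bbS_+^n$ indeed belongs to $\COP(\bbK)$, which is automatic.
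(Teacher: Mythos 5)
Your proposal is correct, and it simply supplies in full the standard arguments that the paper delegates to citations: the paper proves this lemma by referring to \cite[Proposition~7]{GST2013} for extremality and to the method of \cite[Theorem~4.2]{Dickinson2011} for exposedness, and your rank-one range argument together with the exposing functional $\bm{I} - \lVert\bm{a}\rVert^{-2}\bm{a}\bm{a}^\top$ (which is positive semidefinite, hence lies in $\CP(\bbK)^* = \COP(\bbK)$) is exactly that method. No gaps; the only cosmetic omission is that in part (i) you should treat a general element $t\bm{a}^{\otimes 2}$ of the ray rather than only $\bm{a}^{\otimes 2}$ itself, which is handled by rescaling (and by positive semidefiniteness when $t=0$).
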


\begin{proof}
The extremality of the matrices $\bm{a}^{\otimes 2}$ with $\bm{a}\in\mathbb{K} \setminus \{\bm{0}\}$ is shown in \cite[Proposition~7]{GST2013}.
The exposedness of the extreme rays can be shown in a manner similar to that used in \cite[Theorem~\mbox{4.2}]{Dickinson2011}.
\end{proof}

For certain families of cones, there are relations between the faces $\bbF$ of $\bbK$ and those of $\COP(\bbF)$ and $\CP(\bbF)$.
For example, for a face $\bbF$ of a symmetric cone $\bbK$, the set
\begin{equation}
\COP(\bbF) \cap \setspan\{\bm{f}^{\otimes 2} \mid \bm{f} \in \setspan\bbF\} \label{eq:COPF_cap_span}
\end{equation}
 is a face of $\COP(\bbK)$~\cite[Corollary~\mbox{3.2}]{NL2025}.
Moreover, we can also characterize its exposedness, which is summarized in the following theorem.

\begin{theorem}[Simplified version of Corollary~\ref{cor:COP_sym_face}]\label{thm:COP_sym_face_simplified}
Let $\bbK$ be a symmetric cone in $\bbR^n$ and $\bbF$ be a face of $\bbK$.
Then the set in \eqref{eq:COPF_cap_span} is a face of $\COP(\bbK)$.
Furthermore, it is exposed if and only if the face $\bbF$ is either the symmetric cone $\bbK$ or the singleton $\{\bm{0}\}$.
\end{theorem}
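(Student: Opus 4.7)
The face property is established by \cite[Corollary~3.2]{NL2025}, so my plan concentrates on the exposedness characterization. Write $\calF$ for the set in \eqref{eq:COPF_cap_span} and let $V \coloneqq \setspan \bbF$. The easy direction dispatches the two trivial faces at once: when $\bbF = \bbK$, full-dimensionality of the symmetric cone yields $\calF = \COP(\bbK)$, exposed by the zero matrix; when $\bbF = \{\bm{0}\}$, $\calF = \{\bm{O}\}$, exposed by any element in the relative interior of $\CP(\bbK)$ (using pointedness of $\COP(\bbK)$).

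For the converse, the plan is to assume that $\bbF$ is a proper nonzero face and derive a contradiction from exposedness. Suppose $\calF = \{\bm{A} \in \COP(\bbK) \mid \langle \bm{A}, \bm{X}\rangle = 0\}$ for some $\bm{X} = \sum_i \bm{a}_i^{\otimes 2} \in \CP(\bbK)$. Since $\bm{f}^{\otimes 2} \in \calF$ whenever $\bm{f} \in \bbF$, the identity $\sum_i (\bm{a}_i^\top \bm{f})^2 = \langle \bm{X}, \bm{f}^{\otimes 2}\rangle = 0$ forces each $\bm{a}_i$ to lie in $V^\perp \cap \bbK$. I would then exhibit a witness in $\COP(\bbK) \setminus \calF$ on which $\bm{X}$ still vanishes; my candidate is
\begin{equation*}
\bm{A} \coloneqq \bm{v}\bm{c}^\top + \bm{c}\bm{v}^\top
\end{equation*}
for any $\bm{v} \in \bbF \setminus \{\bm{0}\}$ and $\bm{c} \in (V^\perp \cap \bbK) \setminus \{\bm{0}\}$. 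Copositivity follows from self-duality: $\bm{x}^\top \bm{A}\bm{x} = 2(\bm{v}^\top \bm{x})(\bm{c}^\top \bm{x}) \ge 0$ on $\bbK$. The hyperplane condition $\langle \bm{A}, \bm{X}\rangle = 0$ is immediate from $\bm{v}^\top \bm{a}_i = 0$ (since $\bm{v} \in V$ and $\bm{a}_i \in V^\perp$). Finally, $\bm{A}\bm{c} = \|\bm{c}\|^2 \bm{v} \ne \bm{0}$ while $\bm{c} \in V^\perp$, so $\bm{A}$ fails to annihilate $V^\perp$, whereas every element of $\setspan\{\bm{f}^{\otimes 2} \mid \bm{f} \in V\}$ must; hence $\bm{A} \notin \calF$, contradicting the assumed exposedness.

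The main obstacle is guaranteeing the existence of the vector $\bm{c} \in (V^\perp \cap \bbK) \setminus \{\bm{0}\}$, and this is exactly where the symmetric-cone hypothesis is used essentially. Self-duality $\bbK = \bbK^*$ combined with the standard duality fact that the conjugate face $V^\perp \cap \bbK^*$ of any proper face is nonzero---obtainable from a supporting functional at any point in $\ri \bbF$---yields $V^\perp \cap \bbK \ne \{\bm{0}\}$. For non-self-dual cones this intersection can easily collapse to $\{\bm{0}\}$, so the argument truly relies on the symmetry of $\bbK$. Once $\bm{c}$ is in hand, the remaining verifications are short algebraic manipulations.
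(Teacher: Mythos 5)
Your proof is correct and takes essentially the same route as the paper's: your witness $\bm{v}\bm{c}^\top+\bm{c}\bm{v}^\top$ is precisely the paper's $\calB=\sum_{i}(c_i\otimes d+d\otimes c_i)$, a symmetrized product of an element of the face with a nonzero element of the conjugate face, shown to be copositive by self-duality, orthogonal to the putative exposing element, yet outside the face because it does not annihilate $(\setspan\bbF)^\perp$. The only difference is presentational: the paper works in Euclidean Jordan algebra terms and obtains the conjugate-face element from the complementary idempotent, whereas you obtain it from a supporting hyperplane at a relative interior point of $\bbF$; both are valid.
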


See Appendix~\ref{apdx:EJA} for the detailed statement of Theorem~\ref{thm:COP_sym_face_simplified} and its proof.
Note that this statement does not necessarily hold for general $\bbK$ as shown in Example~\ref{ex:counter_ex_general_K}.
In contrast, faces of $\bbK$ always induce exposed faces of $\CP(\bbK)$ as shown next.

\begin{proposition}\label{prop:clconv_cone_to_CP}
Let $\bbK$ be a closed convex cone and $\bbF$ be a face of $\bbK$.
Then $\CP(\bbF)$ is an exposed face of $\CP(\bbK)$.
\end{proposition}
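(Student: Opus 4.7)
The plan is to exhibit an explicit vector in $\COP(\bbK) = \CP(\bbK)^*$ that exposes $\CP(\bbF)$. A natural candidate is the matrix $\bm{A} \coloneqq \bm{I} - \bm{P}$, where $\bm{P}$ is the orthogonal projection onto the subspace $\setspan \bbF$. Since $\bm{A}$ is positive semidefinite (it is itself the orthogonal projection onto $(\setspan \bbF)^\perp$), it clearly lies in $\COP(\bbK)$, and for any $\bm{a} \in \bbR^n$ we have $\bm{a}^\top \bm{A}\bm{a} = \lVert \bm{a} - \bm{P}\bm{a}\rVert^2$, which vanishes precisely when $\bm{a} \in \setspan \bbF$.

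The key set-theoretic fact I need is the identity
\begin{equation*}
\bbK \cap \setspan \bbF = \bbF.
\end{equation*}
The inclusion $\supseteq$ is immediate. For $\subseteq$, observe that $\setspan \bbF = \bbF - \bbF$ since $\bbF$ is a convex cone, so any $\bm{x} \in \bbK \cap \setspan \bbF$ can be written $\bm{x} = \bm{f}_1 - \bm{f}_2$ with $\bm{f}_1,\bm{f}_2 \in \bbF$; then $\bm{f}_1 = \bm{x} + \bm{f}_2 \in \bbF$ is a sum of two elements of $\bbK$, and the face property of $\bbF$ forces $\bm{x} \in \bbF$.

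With this in hand, the two inclusions defining the exposed face fall out quickly. If $\bm{X} \in \CP(\bbF)$, write $\bm{X} = \sum_i \bm{a}_i^{\otimes 2}$ with $\bm{a}_i \in \bbF \subseteq \setspan \bbF$; then each $\bm{a}_i^\top \bm{A}\bm{a}_i = 0$, and summing gives $\langle \bm{A},\bm{X}\rangle = 0$, while $\bm{X} \in \CP(\bbK)$ is obvious. Conversely, if $\bm{X} \in \CP(\bbK)$ satisfies $\langle \bm{A},\bm{X}\rangle = 0$, decompose $\bm{X} = \sum_i \bm{a}_i^{\otimes 2}$ with $\bm{a}_i \in \bbK$; then $\sum_i \lVert \bm{a}_i - \bm{P}\bm{a}_i\rVert^2 = 0$ forces each $\bm{a}_i$ into $\setspan \bbF$, whence $\bm{a}_i \in \bbK \cap \setspan \bbF = \bbF$ and therefore $\bm{X} \in \CP(\bbF)$.

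There is no real obstacle here beyond choosing the right exposing vector; the only subtlety worth flagging is the identity $\bbK \cap \setspan \bbF = \bbF$, which uses the face property of $\bbF$ in an essential way and fails for arbitrary subcones of $\bbK$. Once that identity is in place the proof is a one-line computation with the squared distance to $\setspan \bbF$.
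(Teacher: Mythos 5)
Your proof is correct and is essentially the paper's argument: your exposing matrix $\bm{I}-\bm{P}$ is exactly the paper's $\bm{H}(\calV)=\sum_{\bm{h}\in\calV}\bm{h}^{\otimes 2}$ when $\calV$ is chosen as an orthonormal basis of $(\setspan\bbF)^\perp$, and your inline computation is precisely what Lemma~\ref{lem:HV}\eqref{enum:AinCP_perp} packages. The only difference is that you explicitly justify the identity $\bbK\cap\setspan\bbF=\bbF$ (correctly, via $\setspan\bbF=\bbF-\bbF$ and the face property), which the paper merely asserts.
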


\begin{proof}
Let $\calV$ be a basis for $(\setspan\bbF)^\perp$.
Note that $\calV^\perp = \setspan\bbF$ and $\bbF = (\setspan\bbF) \cap \bbK$.
Therefore, by \eqref{enum:AinCP_perp} of Lemma~\ref{lem:HV}, we obtain the desired result.
\end{proof}

In subsequent sections, we focus on the case where the underlying cone $\bbK$ is the second-order cone $\bbL^n$.
The S-lemma~\cite[Theorem~\mbox{2.2}]{PT2007} implies that the copositive cone $\COP(\bbL^n)$ and the completely positive cone $\CP(\bbL^n)$ can be expressed using a semidefinite constraint, respectively:
\begin{align}
\COP(\bbL^n) &= \bbS_+^n + \bbR_+\bm{J}_n, \label{eq:COP_Ln}\\
\CP(\bbL^n) &= \bbS_+^n \cap \{\bm{A}\in \bbS^n \mid \langle\bm{A},\bm{J}_n\rangle \ge 0\}, \label{eq:CP_Ln}
\end{align}
where the matrix $\bm{J}_n$ is the $n\times n$ diagonal matrix with the $(1,1)$th element $1$ and the other diagonal elements $-1$.
See also Theorem~1 and Corollary~5 of \cite{SZ2003}.
Note that all the faces of $\CP(\bbL^n)$ are exposed since the faces of $\bbS_+^n$ and the polyhedral cone $\{\bm{A} \in \bbS^n \mid \langle\bm{A},\bm{J}_n\rangle \ge 0\}$ are exposed; see \cite[Example~3.1]{BW1981_regularizing} and \cite[page~459]{Tam1976}.

As we will discuss in Appendix~\ref{apdx:EJA}, every symmetric cone $\bbK$ with $\dim \bbK \geq 2$ has a face that is linearly isomorphic to some second-order cone $\bbL^m$ with $m \geq 2$.
In view of Theorem~\ref{thm:COP_sym_face_simplified} and Proposition~\ref{prop:clconv_cone_to_CP}, this creates a strong incentive for carefully examining the facial structure of $\COP(\bbL^m)$ and $\CP(\bbL^m)$ since faces isomorphic to $\COP(\bbL^m)$ and $\CP(\bbL^m)$ appear in the facial structure of $\COP(\bbK)$ and $\CP(\bbK)$, respectively.
Moreover, because non-exposed faces of $\COP(\bbL^m)$ induce non-exposed faces of $\COP(\bbK)$, we would like to investigate the exposedness of the faces of $\COP(\bbL^m)$.

\section{Facial structure of $\COP(\bbL^n)$}\label{sec:COP}
In this section, we classify the faces of the copositive cone $\COP(\bbL^n)$ and investigate their exposedness and dimensions.
We will see that $\COP(\bbL^n)$ has the following four types of faces.
\begin{enumerate}[(i)]
\item The cone $\COP(\bbL^n)$.
\item The family of $T_d$-dimensional exposed faces of the form of $\bbS_+(\bbX)$ where $\bbX$ is a $d$-dimensional subspace of $\{(1;\bm{v})\}^\perp$ and $\bm{v} \in \bbR^{n-1}$ satisfies $\lVert\bm{v}\rVert < 1$ (Theorem~\ref{thm:v<1}). \label{enum:v<1}
\item The family of $T_d$-dimensional faces of the form of $\bbS_+(\bbX)$ where $\bbX$ is a $d$-dimensional subspace of $\{(1;\bm{v})\}^\perp$ and $\bm{v} \in \bbR^{n-1}$ satisfies $\lVert\bm{v}\rVert = 1$. Its exposedness is characterized in Theorem~\ref{thm:face_intersection_zero}.\label{enum:v=1_noJ}
\item The family of $(T_d+1)$-dimensional faces of the form of $\bbS_+(\bbX) + \bbR_+\bm{J}_n$ where $\bbX$ is a $d$-dimensional subspace of $\{(1;\bm{v})\}^\perp$ and $\bm{v} \in \bbR^{n-1}$ satisfies $\lVert\bm{v}\rVert = 1$.
Its exposedness is characterized in Theorem~\ref{thm:face_intersection_nonzero}.
\end{enumerate}
Using these results, we also compute the length of a longest chain of faces and the distance to polyhedrality of $\COP(\bbL^n)$.

To classify the faces of $\COP(\bbL^n)$, we first identify the maximal faces.
Then we describe their subfaces.
The starting point of the analysis is the observation that maximal faces of a pointed closed convex cone with nonempty interior must be exposed by extreme rays of the dual cone.
Furthermore, \emph{exposed} rays of the dual cone must expose maximal faces.
These two points are summarized in Theorems~\mbox{2.17} and \mbox{2.20} of \cite{Dickinson2011} (see also \cite[Theorem~\mbox{2.6}]{YZ2018}).
As shown in Lemma~\ref{lem:exp_ext_closed_cone}, all extreme rays of $\CP(\bbL^n)$ are exposed.
Therefore, a face of $\COP(\mathbb{L}^n)$ is maximal if and only if it can be written as $\COP(\bbL^n) \cap \{\bm{a}^{\otimes 2}\}^\perp$ for some $\bm{a} \in \bbL^n \setminus \{\bm{0}\}$.
Without loss of generality, we may assume that $a_1 = 1$, so that there exists $\bm{v} \in \bbR^{n-1}$ such that $\lVert\bm{v}\rVert \le 1$ and $\bm{a} = (1;\bm{v})$.
In the following subsection, by considering the two cases  $\|\bm{v}\| < 1$ and $\|\bm{v}\| = 1$, we classify the subfaces of the maximal face
\begin{equation}
\COP(\bbL^n) \cap \{(1;\bm{v})^{\otimes 2}\}^\perp. \label{eq:maximal_face_COP}
\end{equation}

\subsection{Faces of $\COP(\bbL^n)$}\label{subsec:faces_COP}

First, we consider the case where the vector $\bm{v}\in\mathbb{R}^{n-1}$ satisfies $\|\bm{v}\| < 1$ and provide the subfaces of the maximal face in \eqref{eq:maximal_face_COP}.

\begin{theorem}\label{thm:v<1}
Suppose that $\bm{v}\in\mathbb{R}^{n-1}$ satisfies $\|\bm{v}\| < 1$.
For any subface $\bbF$ of the maximal face in \eqref{eq:maximal_face_COP}, there exists a subspace $\bbX$ of $\{(1;\bm{v})\}^\perp$ such that
\begin{equation*}
\bbF = \mathbb{S}_+(\bbX).
\end{equation*}
Conversely, for any $d$-dimensional subspace $\bbX$ of $\{(1;\bm{v})\}^\perp$, the set $\mathbb{S}_+(\bbX)$ is a $T_d$-dimensional face of $\COP(\mathbb{L}^n)$.
Furthermore, all the subfaces of the maximal face in \eqref{eq:maximal_face_COP} are exposed faces of $\COP(\bbL^n)$.
\end{theorem}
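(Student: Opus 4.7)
The plan is to pin down the maximal face in \eqref{eq:maximal_face_COP} explicitly, then invoke Lemma~\ref{lem:face_PSD} to enumerate its subfaces, and finally exhibit exposing vectors for each claimed face using Lemma~\ref{lem:HV}\eqref{enum:AinCOP_perp}.

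First, I would observe that $\|\bm{v}\| < 1$ means $(1;\bm{v}) \in \setint \bbL^n$, so the singleton $\calV_0 \coloneqq \{(1;\bm{v})\}$ satisfies the hypotheses $\calV_0 \subseteq \bbL^n$ and $\calV_0 \cap \setint \bbL^n \neq \emptyset$ of Lemma~\ref{lem:HV}\eqref{enum:AinCOP_perp}. Applying that lemma to $\bbK = \bbL^n$, for $\bm{A} \in \COP(\bbL^n)$ we have $\langle \bm{A}, (1;\bm{v})^{\otimes 2}\rangle = 0$ iff $\bm{A} \in \bbS_+(\{(1;\bm{v})\}^\perp)$. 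Consequently the maximal face \eqref{eq:maximal_face_COP} is \emph{equal} to the face $\bbS_+(\{(1;\bm{v})\}^\perp)$ of $\bbS_+^n$, which by Lemma~\ref{lem:face_PSD} is linearly isomorphic to $\bbS_+^{n-1}$.

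Since any subface of a face is itself a face, Lemma~\ref{lem:face_PSD} (applied with the ambient space $\bbX = \{(1;\bm{v})\}^\perp$) gives both the ``any subface has the form $\bbS_+(\bbY)$'' direction and the dimension count $T_d$. Conversely, for any subspace $\bbX \subseteq \{(1;\bm{v})\}^\perp$ of dimension $d$, Lemma~\ref{lem:face_PSD} says $\bbS_+(\bbX)$ is a face of $\bbS_+(\{(1;\bm{v})\}^\perp)$, hence, by transitivity of the ``is a face of'' relation, a $T_d$-dimensional face of $\COP(\bbL^n)$.

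For exposedness I would argue directly: fix a subspace $\bbX \subseteq \{(1;\bm{v})\}^\perp$. Since $(1;\bm{v})$ lies in $\bbX^\perp \cap \setint \bbL^n$, the subspace $\bbX^\perp$ of $\bbR^n$ meets $\setint \bbL^n$, and Lemma~\ref{lem:basis_in_K} yields a basis $\calV \subseteq \bbX^\perp \cap \setint \bbL^n$ of $\bbX^\perp$. Set $\bm{B} \coloneqq \bm{H}(\calV)$. Then $\bm{B} \in \CP(\bbL^n) = \COP(\bbL^n)^*$ by Lemma~\ref{lem:HV}\eqref{enum:MV_cp}, and because $\calV \subseteq \bbL^n$ and $\calV \cap \setint \bbL^n \neq \emptyset$, Lemma~\ref{lem:HV}\eqref{enum:AinCOP_perp} gives, for $\bm{A} \in \COP(\bbL^n)$,
\begin{equation*}
\langle \bm{A},\bm{B}\rangle = 0 \iff \bm{A} \in \bbS_+(\calV^\perp) = \bbS_+(\bbX),
\end{equation*}
so $\bbS_+(\bbX) = \{\bm{B}\}^\perp \cap \COP(\bbL^n)$ is an exposed face.

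I do not anticipate a serious obstacle here: once the identification of the maximal face as $\bbS_+(\{(1;\bm{v})\}^\perp)$ is made via Lemma~\ref{lem:HV}\eqref{enum:AinCOP_perp}, all that remains is an application of the standard face structure of the positive semidefinite cone and a construction of a single exposing vector. The mildly delicate point is ensuring the exposing matrix $\bm{B}$ lies in the dual cone $\CP(\bbL^n)$ \emph{and} annihilates exactly $\bbS_+(\bbX)$, which is why it is important to choose $\calV$ inside $\setint \bbL^n$ rather than merely in $\bbL^n$, so that the ``only if'' direction of Lemma~\ref{lem:HV}\eqref{enum:AinCOP_perp} is available.
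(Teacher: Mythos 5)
Your proposal is correct and follows essentially the same route as the paper's proof: identify the maximal face as $\bbS_+(\{(1;\bm{v})\}^\perp)$ via Lemma~\ref{lem:HV}\eqref{enum:AinCOP_perp}, classify its subfaces with Lemma~\ref{lem:face_PSD}, and expose each $\bbS_+(\bbX)$ by $\bm{H}(\calV)$ for a basis $\calV \subseteq \bbX^\perp \cap \setint\bbL^n$ obtained from Lemma~\ref{lem:basis_in_K}. The only cosmetic difference is your explicit appeal to transitivity of the face relation, which is in any case subsumed by the exposedness construction.
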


\begin{proof}
Recall that $\COP(\bbL^n)$ can be represented as in \eqref{eq:COP_Ln}.
Since $(1;\bm{v}) \in \setint\bbL^n$, \eqref{enum:AinCOP_perp} of Lemma~\ref{lem:HV} implies that the maximal face in \eqref{eq:maximal_face_COP} is $\mathbb{S}_+(\{(1;\bm{v})\}^\perp)$.
By Lemma~\ref{lem:face_PSD}, the subfaces of the maximal face are the sets $\mathbb{S}_+(\bbX)$ where $\bbX$ is a subspace of $\{(1;\bm{v})\}^\perp$, and $\dim\mathbb{S}_+(\bbX) = T_d$ if $\dim\bbX = d$.
In what follows, we show that the subface $\mathbb{S}_+(\bbX)$ is exposed for any subspace $\bbX$ of $\{(1;\bm{v})\}^\perp$.

Since $(1;\bm{v})\in \bbX^\perp \cap \setint\bbL^n$, we see that $\bbX^\perp \cap \setint\bbL^n \neq \emptyset$.
Therefore, by Lemma~\ref{lem:basis_in_K}, we can take a basis $\calV$ for $\bbX^\perp$ such that $\calV \subseteq \bbX^\perp \cap \setint\bbL^n$.
Note that the set $\calV$ satisfies $\calV \subseteq \bbL^n$, $\calV \cap \setint\bbL^n \neq \emptyset$, and $\calV^\perp = \bbX$.
Therefore, by \eqref{enum:AinCOP_perp} of Lemma~\ref{lem:HV}, we have
$\COP(\mathbb{L}^n) \cap \{\bm{H}(\calV)\}^\perp = \mathbb{S}_+(\bbX)$.
Since $\bm{H}(\calV) \in \CP(\bbL^n)$ by \eqref{enum:MV_cp} of Lemma~\ref{lem:HV}, the face $\mathbb{S}_+(\bbX)$ is exposed.
\end{proof}

Next, we consider the case where the vector $\bm{v}\in\mathbb{R}^{n-1}$ satisfies $\|\bm{v}\| = 1$ and provide the subfaces of the maximal face in \eqref{eq:maximal_face_COP}.
Since $(1;\bm{v})^\top\bm{J}_n(1;\bm{v}) = 0$, the maximal face in \eqref{eq:maximal_face_COP} is
\begin{equation}
\mathbb{S}_+(\{(1;\bm{v})\}^\perp) + \mathbb{R}_+\bm{J}_n. \label{eq:maximal_face_COP_v=1}
\end{equation}

As shown in the following lemma, any face of the Minkowski sum of two closed convex cones can be written as the Minkowski sum of faces of the respective cones.
Analogous results for compact convex sets can be found in \cite[Theorem~\mbox{\RNum{4}.1.5.b}]{Ewald1996} and \cite[Lemma~1]{RSY2018}, for example.

\begin{lemma}\label{lem:face_intersection}
Let $\bbK_1$ and $\bbK_2$ be closed convex cones, let $\bbK \coloneqq \bbK_1 + \bbK_2$, and let $\bbF$ be a face of $\bbK$.
Then the following statements hold.
\begin{enumerate}[(i)]
\item For $i = 1,2$, the set $\bbF\cap \bbK_i$ is a face of $\bbK_i$. \label{enum:face_Ki}
\item $\bbF = (\bbF\cap \bbK_1) + (\bbF\cap \bbK_2)$ holds.
In particular, by \eqref{enum:face_Ki}, the face $\bbF$ can be written as the Minkowski sum of a face of $\bbK_1$ and that of $\bbK_2$. \label{enum:face_minkowki_sum}
\end{enumerate}
\end{lemma}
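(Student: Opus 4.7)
The plan is to verify each item by directly applying the defining face axiom of $\bbF \subseteq \bbK = \bbK_1 + \bbK_2$, exploiting the fact that any element of $\bbK_i$ automatically lies in $\bbK$, so the implication ``$a+b \in \bbF$, $a,b \in \bbK$ $\Rightarrow$ $a,b\in \bbF$'' is available whenever $a,b$ come from one of the summands. Both parts are routine once this observation is in place.

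For part (\ref{enum:face_Ki}), I would first check that $\bbF \cap \bbK_i$ is a nonempty convex subcone: it contains $0$ (since any face of a convex cone, being itself a subcone, contains the origin) and it is the intersection of two convex cones. To verify the face property, fix $a,b \in \bbK_i$ with $a+b \in \bbF \cap \bbK_i$. Because $\bbK_i \subseteq \bbK_1 + \bbK_2 = \bbK$, we have $a,b \in \bbK$ and $a+b \in \bbF$; applying the face property of $\bbF \subseteq \bbK$ yields $a,b \in \bbF$, and hence $a,b \in \bbF \cap \bbK_i$.

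For part (\ref{enum:face_minkowki_sum}), the containment $(\bbF \cap \bbK_1) + (\bbF \cap \bbK_2) \subseteq \bbF$ is immediate since $\bbF$ is a convex cone and therefore closed under addition. For the reverse containment, take any $x \in \bbF \subseteq \bbK = \bbK_1 + \bbK_2$ and write $x = k_1 + k_2$ with $k_i \in \bbK_i \subseteq \bbK$. Since $k_1 + k_2 = x \in \bbF$, the face property of $\bbF$ in $\bbK$ forces $k_1, k_2 \in \bbF$, so $k_i \in \bbF \cap \bbK_i$ and $x$ lies in the claimed sum.

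The main (and only mild) conceptual point is recognizing that the face property of $\bbF$ in $\bbK$ can be invoked on decompositions coming from $\bbK_1 + \bbK_2$, because each summand $k_i \in \bbK_i$ is automatically an element of $\bbK$; once this is spelled out, both (\ref{enum:face_Ki}) and (\ref{enum:face_minkowki_sum}) follow without additional work, and no topological assumption on $\bbK_1 + \bbK_2$ (such as closedness of the sum) is needed for the argument.
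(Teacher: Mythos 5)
Your proof is correct and follows essentially the same route as the paper's: both parts are handled by noting that each summand of an element of $\bbK_1+\bbK_2$ lies in $\bbK$ (since $0$ belongs to the other cone) and then invoking the face property of $\bbF$ in $\bbK$. No gaps.
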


\begin{proof}
To prove \eqref{enum:face_Ki}, it suffices to show that $\bbF \cap \bbK_1$ is a face of $\bbK_1$.
The set $\bbF \cap \bbK_1$ is a nonempty convex subcone of $\bbK_1$.
For $a,b\in \bbK_1$, we assume that $a+b\in \bbF\cap \bbK_1$.
It follows from $0 \in \bbK_2$ that $a = a + 0\in \bbK$ and $b = b+0 \in \bbK$.
Since $\bbF$ is a face of $\bbK$, we have $a,b\in \bbF$, so $a,b\in \bbF \cap \bbK_1$ holds.

We next prove \eqref{enum:face_minkowki_sum}.
The inclusion ``$\supseteq$'' holds since $\bbF$ is a convex cone.
To show the reverse inclusion, let $x\in \bbF$.
Since $\bbF \subseteq \bbK$, there exist $x_1 \in \bbK_1$ and $x_2\in \bbK_2$ such that $x = x_1 + x_2 \in \bbF$.
Since $0\in \bbK_1,\bbK_2$, both $x_1$ and $x_2$ belong to $\bbK$.
Because $\bbF$ is a face of $\bbK$, we see that $x_1$ and $x_2$ belong to $\bbF$.
Therefore, $x_i \in \bbF\cap \bbK_i$ for each $i=1,2$.
\end{proof}

\begin{lemma}\label{lem:face_COP_Ln}
Let $\bbF$ be a subface  of the maximal face in \eqref{eq:maximal_face_COP_v=1}.
Then the following statements hold.
\begin{enumerate}[(i)]
\item If $\bbF \cap \mathbb{R}_+\bm{J}_n = \{\bm{O}\}$, then there exists a subspace $\bbX$ of $\{(1;\bm{v})\}^\perp$ such that $\bbF = \mathbb{S}_+(\bbX)$. \label{enum:face_intersection_zero}
\item If $\bbF \cap \mathbb{R}_+\bm{J}_n \supsetneq \{\bm{O}\}$, or equivalently, $\bbF \cap \mathbb{R}_+\bm{J}_n = \mathbb{R}_+\bm{J}_n$,
then there exists a subspace $\bbX$ of $\{(1;\bm{v})\}^\perp$ such that $\bbF = \mathbb{S}_+(\bbX) + \mathbb{R}_+\bm{J}_n$. \label{enum:face_intersection_nonzero}
\end{enumerate}
\end{lemma}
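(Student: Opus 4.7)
The plan is to reduce the classification to two well-understood pieces: faces of the positive semidefinite face $\mathbb{S}_+(\{(1;\bm{v})\}^\perp)$ and faces of the ray $\mathbb{R}_+\bm{J}_n$. The key tool is Lemma~\ref{lem:face_intersection}, which decomposes any face of a Minkowski sum of two closed convex cones as the Minkowski sum of faces of the respective summands.

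First, I observe that since $\calF$ is a face of the maximal face $\mathbb{S}_+(\{(1;\bm{v})\}^\perp) + \mathbb{R}_+\bm{J}_n$, Lemma~\ref{lem:face_intersection}(ii) with $\bbK_1 \coloneqq \mathbb{S}_+(\{(1;\bm{v})\}^\perp)$ and $\bbK_2 \coloneqq \mathbb{R}_+\bm{J}_n$ yields the decomposition
\begin{equation*}
\calF = \bigl(\calF \cap \mathbb{S}_+(\{(1;\bm{v})\}^\perp)\bigr) + \bigl(\calF \cap \mathbb{R}_+\bm{J}_n\bigr),
\end{equation*}
where each summand is a face of its respective cone by Lemma~\ref{lem:face_intersection}(i).

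Next, I analyze each summand. The ray $\mathbb{R}_+\bm{J}_n$ has only two faces, $\{\bm{O}\}$ and $\mathbb{R}_+\bm{J}_n$ itself; this immediately establishes the equivalence stated in (ii), since $\calF \cap \mathbb{R}_+\bm{J}_n \supsetneq \{\bm{O}\}$ forces $\calF \cap \mathbb{R}_+\bm{J}_n = \mathbb{R}_+\bm{J}_n$. For the other summand, Lemma~\ref{lem:face_PSD} applied to $\mathbb{S}_+(\{(1;\bm{v})\}^\perp)$ (which is itself a face of $\mathbb{S}_+^n$ via the subspace $\{(1;\bm{v})\}^\perp$) shows that every face of $\mathbb{S}_+(\{(1;\bm{v})\}^\perp)$ has the form $\mathbb{S}_+(\bbX)$ for some subspace $\bbX$ of $\{(1;\bm{v})\}^\perp$.

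Combining these, in case (i) the ray summand vanishes and we obtain $\calF = \mathbb{S}_+(\bbX)$, while in case (ii) the ray summand equals $\mathbb{R}_+\bm{J}_n$ and we obtain $\calF = \mathbb{S}_+(\bbX) + \mathbb{R}_+\bm{J}_n$. There is no serious obstacle here since both Lemma~\ref{lem:face_intersection} and Lemma~\ref{lem:face_PSD} are already established; the proof is essentially a matter of assembling these ingredients and noting the trivial facial structure of a ray.
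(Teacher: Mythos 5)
Your proposal is correct and follows essentially the same route as the paper: decompose $\calF$ via Lemma~\ref{lem:face_intersection}, identify the $\bbS_+$-summand as $\bbS_+(\bbX)$ via Lemma~\ref{lem:face_PSD}, and use the trivial facial structure of the ray $\bbR_+\bm{J}_n$ to split into the two cases. No gaps.
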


\begin{proof}
By \eqref{enum:face_minkowki_sum} of Lemma~\ref{lem:face_intersection}, we have
\begin{equation*}
\bbF =  (\bbF \cap \mathbb{S}_+(\{(1;\bm{v})\}^\perp)) + (\bbF \cap \mathbb{R}_+\bm{J}_n).
\end{equation*}
It follows from \eqref{enum:face_Ki} of Lemma~\ref{lem:face_intersection} that $\bbF \cap \mathbb{S}_+(\{(1;\bm{v})\}^\perp)$ is a face of $\mathbb{S}_+(\{(1;\bm{v})\}^\perp)$.
By Lemma~\ref{lem:face_PSD}, there exists a subspace $\bbX$ of $\{(1;\bm{v})\}^\perp$ such that $\bbF \cap \mathbb{S}_+(\{(1;\bm{v})\}^\perp) = \mathbb{S}_+(\bbX)$.
Therefore, \eqref{enum:face_intersection_zero} and \eqref{enum:face_intersection_nonzero} hold from their respective assumptions.
\end{proof}

In the upcoming two subsubsections, based on Lemma~\ref{lem:face_COP_Ln}, we consider two subcases depending on how a subface $\bbF$ of \eqref{eq:maximal_face_COP_v=1} intersects with $\mathbb{R}_+\bm{J}_n$.

\subsubsection{Case 1: \texorpdfstring{$\bbF \cap \mathbb{R}_+\bm{J}_n = \{\bm{O}\}$}{TEXT}} \label{subsec:v=1_intersection_zero}
In this case, by \eqref{enum:face_intersection_zero} of Lemma~\ref{lem:face_COP_Ln}, there exists a subspace $\bbX$ of $\{(1;\bm{v})\}^\perp$ such that $\bbF = \mathbb{S}_+(\bbX)$.
In what follows, we prove the converse: for any subspace $\bbX$ of $\{(1;\bm{v})\}^\perp$, the set $\mathbb{S}_+(\bbX)$ is a subface of the maximal face in \eqref{eq:maximal_face_COP_v=1} and, in particular, a face of $\COP(\mathbb{L}^n)$.
In addition, we show that it is exposed if and only if $(1;-\bm{v})\not\in \bbX$.

We first show that $\mathbb{S}_+(\bbX)$ is an exposed face of $\COP(\bbL^n)$ if $(1;-\bm{v})\not\in \bbX$.

\begin{lemma}\label{lem:intL_cap_Wperp_nonempty}
Let $\bbX$ be a subspace of $\{(1;\bm{v})\}^\perp$.
If $(1;-\bm{v})\not\in \bbX$, then we have $\bbX^\perp \cap \setint\bbL^n \neq \emptyset$.
\end{lemma}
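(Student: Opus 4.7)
My plan is to produce an interior point of $\bbL^n$ inside $\bbX^\perp$ by perturbing the boundary vector $(1;\bm{v})$ along a carefully chosen direction that also lies in $\bbX^\perp$. The hypothesis $\bbX \subseteq \{(1;\bm{v})\}^\perp$ immediately gives $(1;\bm{v}) \in \bbX^\perp$, and since $\|\bm{v}\|=1$ we have $(1;\bm{v}) \in \partial\bbL^n$ by \eqref{eq:bdry_Ln}, so we are exactly one perturbation away from success. To check whether a perturbed vector lies in $\setint\bbL^n$, I will use the characterization that $\bm{x} \in \setint\bbL^n$ iff $x_1 > 0$ and $\bm{x}^\top \bm{J}_n \bm{x} > 0$, where $\bm{J}_n = \diag(1,-1,\ldots,-1)$, and look for $\bm{w} \in \bbX^\perp$ such that $(1;\bm{v}) + \epsilon \bm{w}$ satisfies both strict inequalities for some small $\epsilon > 0$.

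Expanding the quadratic form around the base point, the constant term vanishes because $(1;\bm{v})^\top \bm{J}_n (1;\bm{v}) = 1 - \|\bm{v}\|^2 = 0$, leaving the first-order contribution $2\epsilon\, \bm{w}^\top \bm{J}_n (1;\bm{v}) = 2\epsilon\, \bm{w}^\top (1;-\bm{v})$ plus an $O(\epsilon^2)$ remainder. Hence it suffices to exhibit one $\bm{w} \in \bbX^\perp$ with $\bm{w}^\top (1;-\bm{v}) > 0$: for such $\bm{w}$, the quadratic $2\epsilon\, \bm{w}^\top (1;-\bm{v}) + \epsilon^2 \bm{w}^\top \bm{J}_n \bm{w}$ is strictly positive for sufficiently small $\epsilon > 0$, and the first coordinate $1 + \epsilon w_1$ is also strictly positive on a possibly smaller interval of $\epsilon$. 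Therefore $(1;\bm{v}) + \epsilon \bm{w} \in \bbX^\perp \cap \setint\bbL^n$ for $\epsilon>0$ small enough.

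The required $\bm{w}$ is supplied precisely by the hypothesis $(1;-\bm{v}) \not\in \bbX$: since $\bbX$ is a subspace with $\bbX = (\bbX^\perp)^\perp$, the negation of $(1;-\bm{v}) \in \bbX$ is exactly the existence of some $\bm{w}_0 \in \bbX^\perp$ with $\bm{w}_0^\top (1;-\bm{v}) \neq 0$, and replacing $\bm{w}_0$ by $-\bm{w}_0$ if necessary produces one with positive inner product. In this sense the assumption $(1;-\bm{v}) \not\in \bbX$ is exactly the first-order condition needed to push $(1;\bm{v})$ off $\partial\bbL^n$ while staying in $\bbX^\perp$, which is why I do not expect any significant obstacle; the only routine verification is that the $\epsilon^2$ correction cannot spoil either strict inequality for sufficiently small $\epsilon$, which is immediate from continuity.
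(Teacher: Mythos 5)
Your proof is correct, and it reaches the same family of witness points as the paper but by a noticeably leaner route. The paper first invokes Lemma~\ref{lem:Peirce} to decompose $\{(1;\bm{v})\}^\perp = \bbR(1;-\bm{v}) \oplus \{(0;\bm{u}) \mid \bm{u}^\top\bm{v}=0\}$ and then runs an argument by contradiction, comparing two orthogonal decompositions of $\bbR^n$, just to produce a vector $\bm{y} \in \bbX^\perp$ with a nonzero $(1;-\bm{v})$-component; you get the analogous vector $\bm{w}$ in one line from biduality, since $(1;-\bm{v}) \notin \bbX = (\bbX^\perp)^\perp$ is literally the statement that some $\bm{w}_0 \in \bbX^\perp$ has $\bm{w}_0^\top(1;-\bm{v}) \neq 0$. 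The final constructions coincide up to positive scaling: your $(1;\bm{v}) + \epsilon\bm{w}$ with $\epsilon$ small is, after rescaling by $1/\epsilon$, the paper's $\bm{y} + \beta(1;\bm{v})$ with $\beta$ large. Where the paper verifies interiority by an explicit norm computation with the quantitative threshold $\beta > \lVert\bm{u}\rVert^2/(4\alpha)$, you verify it via the characterization $x_1 > 0$, $\bm{x}^\top\bm{J}_n\bm{x} > 0$ and a first-order/continuity argument, exploiting $(1;\bm{v})^\top\bm{J}_n(1;\bm{v}) = 1 - \lVert\bm{v}\rVert^2 = 0$. Both uses of $\lVert\bm{v}\rVert = 1$ are legitimate since that is the standing assumption of the subsection in which the lemma sits (for $\lVert\bm{v}\rVert < 1$ the claim is trivial anyway, as $(1;\bm{v})$ itself is an interior point of $\bbL^n$ lying in $\bbX^\perp$). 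What your approach buys is brevity and independence from Lemma~\ref{lem:Peirce}; what the paper's buys is an explicit, fully quantitative witness rather than an unspecified ``sufficiently small $\epsilon$.''
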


\begin{proof}
Let $\bbU \coloneqq \{(0;\bm{u}) \mid \bm{u}^\top\bm{v} = 0\}$.
Then it follows that
\begin{equation*}
\bbX^\perp \cap \{(1;\bm{v})\}^\perp \subseteq \{(1;\bm{v})\}^\perp =  \bbR(1;-\bm{v}) \oplus \bbU,
\end{equation*}
where the last equality comes from Lemma~\ref{lem:Peirce}.
We claim that $\bbX^\perp \cap \{(1;\bm{v})\}^\perp \not\subseteq \bbU$.
Indeed, if $\bbX^\perp \cap \{(1;\bm{v})\}^\perp \subseteq \bbU$, the space $\bbU$ decomposes into $((\bbX^\perp \cap \{(1;\bm{v})\}^\perp)^\perp \cap \bbU) \oplus (\bbX^\perp \cap \{(1;\bm{v})\}^\perp)$.
Then we have
\begin{align}
\bbR^n &= \bbR(1;-\bm{v}) \oplus \bbU \oplus \bbR(1;\bm{v}) \nonumber\\
&= \bbR(1;-\bm{v}) \oplus ((\bbX^\perp \cap \{(1;\bm{v})\}^\perp)^\perp \cap \bbU) \oplus (\bbX^\perp \cap \{(1;\bm{v})\}^\perp) \oplus \bbR(1;\bm{v}). \label{eq:Rn_decompose_Wperp}
\end{align}
In addition, it follows that
\begin{equation}
\bbR^n = \{(1;\bm{v})\}^\perp \oplus \bbR(1;\bm{v}) = \bbX \oplus (\bbX^\perp \cap \{(1;\bm{v})\}^\perp) \oplus \bbR(1;\bm{v}). \label{eq:Rn_another_decomposition}
\end{equation}
Since \eqref{eq:Rn_decompose_Wperp} and \eqref{eq:Rn_another_decomposition} are orthogonal decompositions, we have
$\bbX = \bbR(1;-\bm{v}) \oplus ((\bbX^\perp \cap  \{(1;\bm{v})\}^\perp)^\perp \cap \bbU)$.
However, this contradicts $(1;-\bm{v}) \not\in \bbX$.

Therefore, there exists $\bm{y} \in \bbX^\perp \cap \{(1;\bm{v})\}^\perp$ such that $\bm{y} = \alpha(1;-\bm{v}) + (0;\bm{u})$, where $\alpha$ and $\bm{u}$ satisfy $\alpha \neq 0$ and $\bm{u}^\top\bm{v} = 0$.
Since $\bbX^\perp \cap \{(1;\bm{v})\}^\perp$ is a subspace, without loss of generality, we may assume that $\alpha > 0$.
Let $\beta$ be such that $\beta > \frac{\lVert\bm{u}\rVert^2}{4\alpha}$ and
\begin{equation*}
\bm{x} \coloneqq \bm{y} + \beta(1;\bm{v}) = (\alpha + \beta; (-\alpha + \beta)\bm{v} + \bm{u})\in \bbX^\perp.
\end{equation*}
Then we see that
\begin{equation*}
\lVert(-\alpha + \beta)\bm{v} + \bm{u}\rVert^2 = (-\alpha + \beta)^2 + \lVert\bm{u}\rVert^2 < (\alpha + \beta)^2.
\end{equation*}
Combining this with $\alpha + \beta > 0$, we obtain $\bm{x}\in \setint\bbL^n$.
This shows that $\bbX^\perp \cap \setint\bbL^n \neq \emptyset$.
\end{proof}

\begin{proposition}\label{prop:face_intersection_zero_exposed}
Let $\bbX$ be a subspace of $\{(1;\bm{v})\}^\perp$.
If $(1;-\bm{v})\not\in \bbX$, then $\mathbb{S}_+(\bbX)$ is an exposed face of $\COP(\bbL^n)$.
\end{proposition}

\begin{proof}
By Lemma~\ref{lem:intL_cap_Wperp_nonempty} and Lemma~\ref{lem:basis_in_K}, there exists a basis $\calV$ for $\bbX^\perp$ such that $\calV \subseteq \bbX^\perp \cap \setint\bbL^n$.
Since the set $\calV$ satisfies $\calV \subseteq \bbL^n$, $\calV \cap \setint\bbL^n \neq \emptyset$, and $\calV^\perp = \bbX$, it follows from \eqref{enum:AinCOP_perp} of Lemma~\ref{lem:HV} that $\COP(\bbL^n) \cap \{\bm{H}(\calV)\}^\perp = \bbS_+(\bbX)$.
Since $\bm{H}(\calV) \in \CP(\bbL^n)$ by \eqref{enum:MV_cp} of Lemma~\ref{lem:HV}, $\mathbb{S}_+(\bbX)$ is an exposed face of $\COP(\bbL^n)$.
\end{proof}

Next we show that $\bbS_+(\bbX)$ is a non-exposed face of $\COP(\bbL^n)$ if $(1;-\bm{v})\in\bbX$.

\begin{lemma}\label{lem:not_PSD}
Let $\bm{A}\in \mathbb{S}_+(\{(1;\bm{v})\}^\perp)$ and let $t$ be a nonzero real number.
Then the matrix $\bm{P} \coloneqq \bm{A} - t\bm{J}_n$ is not positive semidefinite.
\end{lemma}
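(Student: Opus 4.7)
The plan is to evaluate the quadratic form $\bm{x}^\top \bm{P}\bm{x}$ at the specific vector $\bm{x} = (1;\bm{v})$, exploiting the fact that this vector is simultaneously annihilated by $\bm{A}$ (as a member of $\mathbb{S}_+(\{(1;\bm{v})\}^\perp)$) and isotropic for the form induced by $\bm{J}_n$ (because we are in the regime $\|\bm{v}\|=1$). This will produce a null vector of $\bm{P}$ in the quadratic sense that $\bm{P}$ nonetheless cannot annihilate as a linear map, which is incompatible with positive semidefiniteness.

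More concretely, first I would note that $\bm{A}\in \mathbb{S}_+(\{(1;\bm{v})\}^\perp)$ is equivalent to saying that $\bm{A}$ is positive semidefinite with range contained in $\{(1;\bm{v})\}^\perp$; in particular $\bm{A}(1;\bm{v}) = \bm{0}$, and hence $(1;\bm{v})^\top \bm{A}(1;\bm{v})=0$. Second, since $\bm{J}_n(1;\bm{v})=(1;-\bm{v})$, the standing assumption $\|\bm{v}\|=1$ gives $(1;\bm{v})^\top \bm{J}_n(1;\bm{v}) = 1-\|\bm{v}\|^2=0$. Combining these,
\begin{equation*}
(1;\bm{v})^\top \bm{P}(1;\bm{v}) \;=\; (1;\bm{v})^\top \bm{A}(1;\bm{v}) - t(1;\bm{v})^\top \bm{J}_n(1;\bm{v}) \;=\; 0.
\end{equation*}

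Now I would argue by contradiction: if $\bm{P}$ were positive semidefinite, then the standard fact that a PSD matrix annihilates every vector on which its associated quadratic form vanishes would force $\bm{P}(1;\bm{v})=\bm{0}$. But
\begin{equation*}
\bm{P}(1;\bm{v}) \;=\; \bm{A}(1;\bm{v}) - t\bm{J}_n(1;\bm{v}) \;=\; -t(1;-\bm{v}),
\end{equation*}
and $(1;-\bm{v})$ is nonzero (its first coordinate is $1$), so $\bm{P}(1;\bm{v})=\bm{0}$ would force $t=0$, contradicting the hypothesis $t\neq 0$.

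There is no real obstacle here; the entire argument hinges on recognizing $(1;\bm{v})$ as the single test vector that simultaneously lies in $\ker\bm{A}$ and on the light cone of $\bm{J}_n$, which is precisely the feature of the boundary case $\|\bm{v}\|=1$ that distinguishes it from the interior case $\|\bm{v}\|<1$ treated in Theorem~\ref{thm:v<1}.
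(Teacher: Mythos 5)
Your proof is correct, and it takes a genuinely different route from the paper's. The paper reduces to the special case $\bm{v}=\bm{e}_1$ by conjugating with a Householder-type orthogonal matrix and then exhibits a $2\times 2$ principal submatrix of $\bm{P}$ with determinant $-t^2<0$; you instead observe that $(1;\bm{v})$ is an isotropic vector for $\bm{P}$ (since $\bm{A}(1;\bm{v})=\bm{0}$ by the definition of $\mathbb{S}_+(\{(1;\bm{v})\}^\perp)$ and $(1;\bm{v})^\top\bm{J}_n(1;\bm{v})=1-\|\bm{v}\|^2=0$) that $\bm{P}$ fails to annihilate, because $\bm{P}(1;\bm{v})=-t(1;-\bm{v})\neq\bm{0}$, and you invoke the standard fact that a positive semidefinite matrix must send to zero every vector on which its quadratic form vanishes. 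Both arguments are sound; yours is shorter, coordinate-free, and dispenses with the case reduction, at the cost of relying on the kernel characterization of positive semidefinite matrices rather than an explicit minor computation. You also correctly isolate the standing hypothesis $\|\bm{v}\|=1$ as the point where the argument uses the boundary case (and indeed the lemma fails for $\|\bm{v}\|<1$, e.g.\ $\bm{A}=a\,\bm{e}_2^{\otimes 2}$ and $t<0$ with $a\ge|t|$ in $\mathbb{S}^2$), so the proposal is a valid alternative proof.
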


\begin{proof}
We first consider the case where $\bm{v} = \bm{e}_1$.
Then $(1;-\bm{e}_1),(0;\bm{e}_2),\dots,(0;\bm{e}_{n-1})$ forms a basis for $\{(1;\bm{e}_1)\}^\perp$.
Since $\bm{A}\in \mathbb{S}_+(\{(1;\bm{e}_1)\}^\perp)$, there exist $\bm{a}_1,\dots,\bm{a}_k\in \{(1;\bm{e}_1)\}^\perp$ such that $\bm{A} = \sum_{i=1}^k\bm{a}_i^{\otimes 2}$.
In addition, for each $i = 1,\dots,k$, there exist $c_{i,1},\dots,c_{i,n-1}\in \bbR$ such that
\begin{equation*}
\bm{a}_i = c_{i,1}(1;-\bm{e}_1) + \sum_{j=2}^{n-1}c_{i,j}(0;\bm{e}_j) = (c_{i,1},-c_{i,1},c_{i,2},\dots,c_{i,n-1})^\top.
\end{equation*}
For brevity, we define $\bm{c}_i \coloneqq (c_{i,2},\dots,c_{i,n-1})^\top \in \bbR^{n-2}$.
Then we have
\begin{equation*}
\bm{A} = \sum_{i=1}^k\bm{a}_i^{\otimes 2} = \sum_{i=1}^k\begin{pmatrix}
c_{i,1}^2 & -c_{i,1}^2 & c_{i,1}\bm{c}_i^\top\\
-c_{i,1}^2 & c_{i,1}^2 & -c_{i,1}\bm{c}_i^\top\\
c_{i,1}\bm{c}_i & -c_{i,1}\bm{c}_i & \bm{c}_i^{\otimes 2}
\end{pmatrix}.
\end{equation*}
Letting $c \coloneqq \sum_{i=1}^kc_{i,1}^2$, we see that the upper-left corner $2\times 2$ submatrix of $\bm{P}$ is
\begin{equation*}
\begin{pmatrix}
c-t & -c\\
-c &  c+t
\end{pmatrix},
\end{equation*}
whose determinant is $-t^2 < 0$.
Therefore, $\bm{P}$ is not positive semidefinite.

Next we consider the general case.
Take a symmetric orthogonal matrix $\bm{Q}$ of order $n-1$ such that $\bm{Q}\bm{e}_1 = \bm{v}$.
For example, we can utilize a Householder matrix to obtain such $\bm{Q}$~\cite[page~\mbox{88}]{HJ2013}.
Since $\bm{A}\in \mathbb{S}_+(\{(1;\bm{v})\}^\perp)$, there exist $\bm{a}_1,\dots,\bm{a}_k\in \{(1;\bm{v})\}^\perp$ such that $\bm{A} = \sum_{i=1}^k\bm{a}_i^{\otimes 2}$.
Then we have
\begin{equation}
\begin{pmatrix}
1 & \\
 & \bm{Q}
\end{pmatrix}\bm{P}\begin{pmatrix}
1 & \\
 & \bm{Q}
\end{pmatrix}^\top =  \sum_{i=1}^k\left\{
\begin{pmatrix}
1 & \\
 & \bm{Q}
\end{pmatrix}\bm{a}_i
\right\}^{\otimes 2} - t\begin{pmatrix}
1 & \\
 & \bm{Q}
\end{pmatrix}\bm{J}_n\begin{pmatrix}
1 & \\
 & \bm{Q}
\end{pmatrix}^\top. \label{eq:P_transformation}
\end{equation}
For each $i = 1,\dots,k$, it follows from $\bm{a}_i \in \{(1;\bm{v})\}^\perp$ that
\begin{equation*}
\left\{
\begin{pmatrix}
1 & \\
 & \bm{Q}
\end{pmatrix}\bm{a}_i
\right\}^\top(1;\bm{e}_1) = \bm{a}_i^\top(1;\bm{v}) = 0.
\end{equation*}
This implies that
\begin{equation*}
\begin{pmatrix}
1 & \\
 & \bm{Q}
\end{pmatrix}\bm{a}_i \in \{(1;\bm{e}_1)\}^\perp.
\end{equation*}
In addition, we have
\begin{equation*}
\begin{pmatrix}
1 & \\
 & \bm{Q}
\end{pmatrix}\bm{J}_n\begin{pmatrix}
1 & \\
 & \bm{Q}
\end{pmatrix}^\top = \bm{J}_n.
\end{equation*}
Therefore, by the result for the case where $\bm{v} = \bm{e}_1$, we see that the matrix in \eqref{eq:P_transformation} is not positive semidefinite and neither is the matrix $\bm{P}$.
\end{proof}

\begin{lemma}\label{lem:face_intersection_zero}
The maximal face in \eqref{eq:maximal_face_COP_v=1} is the minimal exposed face of $\COP(\mathbb{L}^n)$ containing the matrix $(1;-\bm{v})^{\otimes 2}$.
\end{lemma}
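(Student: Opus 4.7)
The plan is to verify that $(1;-\bm v)^{\otimes 2}$ sits inside the maximal face in \eqref{eq:maximal_face_COP_v=1} and then show that every exposed face of $\COP(\mathbb{L}^n)$ containing $(1;-\bm v)^{\otimes 2}$ must in fact contain the whole maximal face. Since the maximal face is itself exposed (it is cut out by the extreme ray $(1;\bm v)^{\otimes 2}$ of $\CP(\mathbb{L}^n)$, which lies in $\CP(\mathbb{L}^n)$ because $(1;\bm v)\in\mathbb{L}^n$), this will give the claim.

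\textbf{Membership.} Since $\|\bm v\|=1$, we have $(1;-\bm v)^\top(1;\bm v)=1-\|\bm v\|^2=0$, so $(1;-\bm v)\in\{(1;\bm v)\}^\perp$ and hence $(1;-\bm v)^{\otimes 2}\in\mathbb{S}_+(\{(1;\bm v)\}^\perp)$, which is contained in the maximal face \eqref{eq:maximal_face_COP_v=1}.

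\textbf{Minimality.} Let $\mathcal{F}$ be any exposed face of $\COP(\mathbb{L}^n)$ containing $(1;-\bm v)^{\otimes 2}$. By definition there exists $\bm A\in\CP(\mathbb{L}^n)$ with $\mathcal{F}=\COP(\mathbb{L}^n)\cap\{\bm A\}^\perp$ and $\langle(1;-\bm v)^{\otimes 2},\bm A\rangle=0$. Writing $\bm A=\sum_{i=1}^k\bm a_i^{\otimes 2}$ with $\bm a_i\in\mathbb{L}^n$, the vanishing inner product forces $\bm a_i^\top(1;-\bm v)=0$ for every $i$. For $\bm a_i=(a_{i,1};\bm a_{i,2:n})\in\mathbb{L}^n$ this says $a_{i,1}=\bm a_{i,2:n}^\top\bm v$, and combining the second-order cone inequality $a_{i,1}\ge\|\bm a_{i,2:n}\|$ with Cauchy--Schwarz (using $\|\bm v\|=1$) gives $a_{i,1}\ge\|\bm a_{i,2:n}\|\ge\bm a_{i,2:n}^\top\bm v=a_{i,1}$, so equality holds throughout. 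The equality case of Cauchy--Schwarz then yields $\bm a_{i,2:n}=t_i\bm v$ for some $t_i\ge 0$, hence $\bm a_i=t_i(1;\bm v)$. Consequently $\bm A=c\,(1;\bm v)^{\otimes 2}$ for $c:=\sum_it_i^2\ge 0$.

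If $c=0$ then $\bm A=\bm O$ and $\mathcal{F}=\COP(\mathbb{L}^n)$, which trivially contains the maximal face; if $c>0$ then $\mathcal{F}=\COP(\mathbb{L}^n)\cap\{(1;\bm v)^{\otimes 2}\}^\perp$ is exactly the maximal face in \eqref{eq:maximal_face_COP_v=1}. In either case $\mathcal{F}$ contains the maximal face, proving minimality.

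The main substantive step is the second one: extracting from the single orthogonality condition $\bm A^\top(1;-\bm v)^{\otimes 2}=0$ the very strong conclusion that $\bm A$ must be a nonnegative multiple of $(1;\bm v)^{\otimes 2}$. This is precisely where $\|\bm v\|=1$ is essential, because it is only on the boundary of $\bbL^n$ that Cauchy--Schwarz forces every generator $\bm a_i$ to be proportional to $(1;\bm v)$; the same argument would fail when $\|\bm v\|<1$.
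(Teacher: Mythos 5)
Your proposal is correct and follows essentially the same route as the paper: decompose the exposing element $\bm{A}\in\CP(\bbL^n)$ into rank-one terms, use $\langle(1;-\bm{v})^{\otimes 2},\bm{A}\rangle=0$ to force each generator to be orthogonal to $(1;-\bm{v})$, and then use membership in $\bbL^n$ to conclude each generator is a nonnegative multiple of $(1;\bm{v})$, so that $\calF$ is either $\COP(\bbL^n)$ or the maximal face. The only cosmetic difference is that you invoke the equality case of Cauchy--Schwarz where the paper uses the orthogonal decomposition of Lemma~\ref{lem:Peirce}; these are interchangeable.
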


\begin{proof}
Let $\bbF$ be an exposed face of $\COP(\mathbb{L}^n)$ containing $(1;-\bm{v})^{\otimes 2}$.
Then there exists $\bm{H}\in\CP(\mathbb{L}^n)$ such that $\bbF = \COP(\mathbb{L}^n) \cap \{\bm{H}\}^\perp$.
We decompose $\bm{H}$ into $\sum_{i=1}^k\bm{h}_i^{\otimes 2}$ where $\bm{h}_i \in \mathbb{L}^n$ for all $i = 1,\dots,k$.
Since $(1;-\bm{v})^{\otimes 2} \in \bbF \subseteq  \{\bm{H}\}^\perp$, it follows that
\begin{equation*}
0 = \langle (1;-\bm{v})^{\otimes 2},\bm{H}\rangle = \sum_{i=1}^k\{(1;-\bm{v})^\top\bm{h}_i\}^2.
\end{equation*}
This implies that $(1;-\bm{v})^\top\bm{h}_i = 0$ for each $i = 1,\dots,k$.
By Lemma~\ref{lem:Peirce}, there exist $\alpha_i \in \bbR$ and $\bm{u}_i \in \bbR^{n-1}$ that is orthogonal to $-\bm{v}$ such that
\begin{equation*}
\bm{h}_i = \alpha_i(1;\bm{v}) + (0;\bm{u}_i) = (\alpha_i;\alpha_i\bm{v} + \bm{u}_i).
\end{equation*}
It follows from $\bm{h}_i \in \bbL^n$ that $\alpha_i \ge 0$.
In addition, $\bm{u}_i$ equals $\bm{0}$.
Otherwise, we have $\lVert\alpha_i\bm{v} + \bm{u}_i\rVert = \sqrt{\alpha_i^2 + \lVert\bm{u}_i\rVert^2} > \alpha_i$, which contradicts $\bm{h}_i \in \bbL^n$.
Therefore, we have $\bm{h}_i = \alpha_i(1;\bm{v})$.
Letting $\alpha \coloneqq \sum_{i=1}^k\alpha_i^2$, we have
\begin{equation}
\bbF = \COP(\mathbb{L}^n) \cap \{\alpha(1;\bm{v})^{\otimes 2}\}^\perp. \label{eq:exposed_face_including_1v}
\end{equation}
If $\alpha = 0$, then \eqref{eq:exposed_face_including_1v} equals $\COP(\mathbb{L}^n)$.
Otherwise, \eqref{eq:exposed_face_including_1v} equals $\COP(\mathbb{L}^n) \cap \{(1;\bm{v})^{\otimes 2}\}^\perp$, which equals \eqref{eq:maximal_face_COP_v=1}.
\end{proof}

\begin{proposition}\label{prop:face_intersection_zero_not_exposed}
Let $\bbX$ be a subspace of $\{(1;\bm{v})\}^\perp$.
If $(1;-\bm{v})\in \bbX$, then $\mathbb{S}_+(\bbX)$ is a non-exposed face of $\COP(\bbL^n)$.
\end{proposition}

\begin{proof}
We first show that $\mathbb{S}_+(\bbX)$ is a face of $\COP(\mathbb{L}^n)$.
The set $\mathbb{S}_+(\bbX)$ is a nonempty convex subcone of $\COP(\mathbb{L}^n)$.
For $\bm{A}_1,\bm{A}_2\in \COP(\bbL^n)$, we assume that
\begin{equation}
\bm{A}_1 + \bm{A}_2 \in \mathbb{S}_+(\bbX). \label{eq:A1_plus_A2_assumption}
\end{equation}
For each $i = 1,2$, there exist $\bm{P}_i\in \bbS_+^n$ and $t_i \ge 0$ such that $\bm{A}_i = \bm{P}_i + t_i\bm{J}_n$.
Then we have
\begin{equation}
\bm{P}_1 + \bm{P}_2 =  \bm{A}_1 + \bm{A}_2 - (t_1 + t_2)\bm{J}_n. \label{eq:P1_plus_P2}
\end{equation}
If $t_1 + t_2 > 0$, since $\bm{A}_1 + \bm{A}_2 \in \mathbb{S}_+(\bbX) \subseteq \mathbb{S}_+(\{(1;\bm{v})\}^\perp)$, it follows from Lemma~\ref{lem:not_PSD} that the right-hand side of \eqref{eq:P1_plus_P2} is not positive semidefinite.
However, $\bm{P}_1$ and $\bm{P}_2$ are positive semidefinite and so is the left-hand side of \eqref{eq:P1_plus_P2}, which is a contradiction.
Therefore, combining the nonnegativity of $t_1$ and $t_2$, we see that $t_1 = t_2 = 0$, so $\bm{A}_1$ and $\bm{A}_2$ are positive semidefinite.
Since $\mathbb{S}_+(\bbX)$ is a face of $\bbS_+^n$, \eqref{eq:A1_plus_A2_assumption} implies that $\bm{A}_1,\bm{A}_2\in \bbS_+(\bbX)$.
Therefore, $\bbS_+(\bbX)$ is a face of $\COP(\bbL^n)$.

Next, we have
\begin{equation*}
(1;-\bm{v})^{\otimes 2} \in \bbS_+(\bbX) \subsetneq \bbS_+(\{(1;\bm{v})\}^\perp) + \bbR_+\bm{J}_n,
\end{equation*}
where we use the assumption that $(1;-\bm{v})\in \bbX$.
By Lemma~\ref{lem:face_intersection_zero}, the face $\bbS_+(\bbX)$ is not exposed.
\end{proof}

Combining the above results, we obtain the following theorem.

\begin{theorem}\label{thm:face_intersection_zero}
Suppose that $\bm{v} \in \bbR^{n-1}$ satisfies $\lVert\bm{v}\rVert = 1$ and $\bbF$ is a subface of the maximal face in \eqref{eq:maximal_face_COP_v=1}.
If $\bbF \cap \bbR_+\bm{J}_n = \{\bm{O}\}$, then there exists a subspace $\bbX$ of  $\{(1;\bm{v})\}^\perp$ such that \[\bbF = \bbS_+(\bbX).\]
Conversely, for any $d$-dimensional subspace $\bbX$ of $\{(1;\bm{v})\}^\perp$, the set $\bbS_+(\bbX)$ is a $T_d$-dimensional face of $\COP(\bbL^n)$.
Furthermore, it is exposed if and only if $(1;-\bm{v})\not\in \bbX$.
\end{theorem}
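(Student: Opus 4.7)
The plan is to assemble the building blocks already established in this subsection rather than prove anything substantially new. The first implication is simply Lemma~\ref{lem:face_COP_Ln}\eqref{enum:face_intersection_zero} applied under the hypothesis $\calF \cap \bbR_+\bm{J}_n = \{\bm{O}\}$: that lemma already produces a subspace $\bbX$ of $\{(1;\bm{v})\}^\perp$ for which $\calF = \bbS_+(\bbX)$, so this direction requires no additional argument.

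For the converse, I would fix a $d$-dimensional subspace $\bbX$ of $\{(1;\bm{v})\}^\perp$ and verify three things in order: (i) $\bbS_+(\bbX)$ is a face of $\COP(\bbL^n)$, (ii) its dimension equals $T_d$, and (iii) it is exposed if and only if $(1;-\bm{v}) \notin \bbX$. Point (ii) is immediate from Lemma~\ref{lem:face_PSD}, since $\bbS_+(\bbX)$ is linearly isomorphic to $\bbS_+^d$. For (i) and (iii) I would split on whether $(1;-\bm{v})$ lies in $\bbX$. If it does not, Proposition~\ref{prop:face_intersection_zero_exposed} constructs an explicit $\bm{H}(\calV) \in \CP(\bbL^n)$ such that $\COP(\bbL^n) \cap \{\bm{H}(\calV)\}^\perp = \bbS_+(\bbX)$, which simultaneously supplies the face property and exposedness. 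If $(1;-\bm{v})$ does lie in $\bbX$, Proposition~\ref{prop:face_intersection_zero_not_exposed} establishes the face property by splitting $\bm{A}_1,\bm{A}_2 \in \COP(\bbL^n)$ through the decomposition \eqref{eq:COP_Ln} and ruling out a positive coefficient of $\bm{J}_n$ via Lemma~\ref{lem:not_PSD}; the same proposition then denies exposedness by comparing $\bbS_+(\bbX)$ against Lemma~\ref{lem:face_intersection_zero}, which identifies the maximal face in \eqref{eq:maximal_face_COP_v=1} as the smallest exposed face containing $(1;-\bm{v})^{\otimes 2}$.

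Because all the technical effort has already been absorbed into the lemmas and propositions of this subsection, there is no genuine obstacle in writing the theorem itself; the task is one of bookkeeping. The real content was localized in Lemma~\ref{lem:not_PSD} and in the exposing construction of Proposition~\ref{prop:face_intersection_zero_exposed}: the former reduces to the case $\bm{v} = \bm{e}_1$ via a Householder conjugation and a $2\times 2$ determinant, while the latter relies on Lemma~\ref{lem:intL_cap_Wperp_nonempty} to secure $\bbX^\perp \cap \setint \bbL^n \neq \emptyset$ and then combines Lemmas~\ref{lem:basis_in_K} and \ref{lem:HV}\eqref{enum:AinCOP_perp} to turn the basis into an exposing functional.
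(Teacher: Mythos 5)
Your proposal is correct and follows essentially the same route as the paper: the forward direction is Lemma~\ref{lem:face_COP_Ln}\eqref{enum:face_intersection_zero}, the converse splits on whether $(1;-\bm{v})\in\bbX$ and invokes Propositions~\ref{prop:face_intersection_zero_exposed} and \ref{prop:face_intersection_zero_not_exposed}, and the dimension count comes from Lemma~\ref{lem:face_PSD}. Your identification of where the real technical content lives (Lemma~\ref{lem:not_PSD} and the exposing construction via Lemmas~\ref{lem:intL_cap_Wperp_nonempty}, \ref{lem:basis_in_K}, and \ref{lem:HV}\eqref{enum:AinCOP_perp}) matches the structure of the paper's argument.
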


\begin{proof}
Recalling that we are under \eqref{enum:face_intersection_zero} of Lemma~\ref{lem:face_COP_Ln}, the statements follow from Propositions~\ref{prop:face_intersection_zero_exposed} and \ref{prop:face_intersection_zero_not_exposed}.
By Lemma~\ref{lem:face_PSD}, we see that $\dim\mathbb{S}_+(\bbX) = T_d$ if $\dim\bbX = d$.
\end{proof}

\subsubsection{Case 2: \texorpdfstring{$\bbF \cap \mathbb{R}_+\bm{J}_n \supsetneq \{\bm{O}\}$}{TEXT}}
In this case, by \eqref{enum:face_intersection_nonzero} of Lemma~\ref{lem:face_COP_Ln}, there exists a subspace $\bbX$ of $\{(1;\bm{v})\}^\perp$ such that $\bbF = \mathbb{S}_+(\bbX) + \mathbb{R}_+\bm{J}_n$.
In what follows, we show results similar to those of Section~\ref{subsec:v=1_intersection_zero}:
for any subspace $\bbX$ of $\{(1;\bm{v})\}^\perp$, the set $\mathbb{S}_+(\bbX) + \mathbb{R}_+\bm{J}_n$ is a subface of the maximal face in \eqref{eq:maximal_face_COP_v=1}.
In addition, we discuss its exposedness.

\begin{proposition}\label{prop:face_intersection_nonzero_exposed}
Let $\bbX$ be a subspace of $\{(1;\bm{v})\}^\perp$.
If $(1;-\bm{v})\not\in \bbX$, then $\mathbb{S}_+(\bbX) + \mathbb{R}_+\bm{J}_n$ is an exposed face of $\COP(\mathbb{L}^n)$.
\end{proposition}

\begin{proof}
Let $l \coloneqq \dim\bbX^\perp$.
If $l \le 1$, then $\dim\bbX \ge n-1$ and we have $\bbX = \{(1;\bm{v})\}^\perp$, which contradicts $(1;-\bm{v})\not\in \bbX$.
Therefore, we see that $l \ge 2$.
By Lemmas~\ref{lem:intL_cap_Wperp_nonempty} and \ref{lem:basis_in_partialK}, there exists a basis $\calV \coloneqq \{\bm{h}_1,\dots,\bm{h}_l\}$ for $\bbX^\perp$ such that $\calV \subseteq \bbX^\perp \cap \partial\bbL^n$.
Without loss of generality, in view of the formula for $\partial\bbL^n$ provided in \eqref{eq:bdry_Ln}, for each $j = 1,\dots,l$, we may write $\bm{h}_j$ as $(1;\bm{v}_j)$ with $\lVert\bm{v}_j\rVert = 1$.
In what follows, we show that $\bbS_+(\bbX) + \bbR_+\bm{J}_n= \COP(\bbL^n) \cap \{\bm{H}(\calV)\}^\perp$.

Let $\bm{A}\in \COP(\bbL^n) \cap \{\bm{H}(\calV)\}^\perp$.
Then, since $\bm{A}\in \COP(\bbL^n)$, there exist $\bm{P}\in \bbS_+^n$ and $t\ge 0$ such that $\bm{A} = \bm{P} + t\bm{J}_n$.
Then it follows from $\bm{A}\in \{\bm{H}(\calV)\}^\perp$ that
\begin{equation*}
0  = \langle\bm{A},\bm{H}(\calV)\rangle = \langle\bm{P},\bm{H}(\calV)\rangle,
\end{equation*}
where we use $\langle\bm{J}_n,(1;\bm{v}_j)^{\otimes 2}\rangle = 1-\lVert\bm{v}_j\rVert^2 = 0$ for all $j=1,\dots,l$ to derive the second equation.
By \eqref{enum:AinCP_perp} of Lemma~\ref{lem:HV} and $\calV^\perp = \bbX$, this implies that $\bm{P}\in \bbS_+(\bbX)$.
Therefore, we obtain the inclusion $\COP(\bbL^n) \cap \{\bm{H}(\calV)\}^\perp \subseteq \bbS_+(\bbX) + \bbR_+\bm{J}_n$.
The reverse inclusion also holds since $\bbS_+(\bbX) + \bbR_+\bm{J}_n \subseteq \COP(\bbL^n)$ and $\calV^\perp = \bbX$.
\end{proof}

\begin{lemma}\label{lem:v=1_proper_inclusion}
Let $\bbX$ be a proper subspace  of $\{(1;\bm{v})\}^\perp$.
Then it follows that $\bbS_+(\bbX) + \bbR_+\bm{J}_n \subsetneq \bbS_+(\{(1;\bm{v})\}^\perp) + \bbR_+\bm{J}_n$.
\end{lemma}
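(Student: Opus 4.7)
The plan is to exhibit a concrete element of $\bbS_+(\{(1;\bm{v})\}^\perp)$ that is not contained in $\bbS_+(\bbX) + \bbR_+\bm{J}_n$. Since $\bbX$ is a proper subspace of $\{(1;\bm{v})\}^\perp$, I would pick any $\bm{y} \in \{(1;\bm{v})\}^\perp \setminus \bbX$ and consider the rank-one matrix $\bm{y}^{\otimes 2}$, which clearly belongs to $\bbS_+(\{(1;\bm{v})\}^\perp)$.

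To derive a contradiction, suppose $\bm{y}^{\otimes 2} = \bm{A} + t\bm{J}_n$ for some $\bm{A} \in \bbS_+(\bbX)$ and $t \ge 0$. I would split into two cases according to the value of $t$. If $t > 0$, rearranging gives $\bm{A} = \bm{y}^{\otimes 2} - t\bm{J}_n$. Since $\bm{y}^{\otimes 2} \in \bbS_+(\{(1;\bm{v})\}^\perp)$, Lemma~\ref{lem:not_PSD} applies and shows that this difference fails to be positive semidefinite, contradicting $\bm{A} \in \bbS_+(\bbX) \subseteq \bbS_+^n$.

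If instead $t = 0$, then $\bm{y}^{\otimes 2} = \bm{A} \in \bbS_+(\bbX)$. Writing $\bm{A} = \sum_{i=1}^k \bm{a}_i^{\otimes 2}$ with $\bm{a}_i \in \bbX$ and comparing ranges yields $\bbR \bm{y} = \mathrm{range}(\bm{y}^{\otimes 2}) = \mathrm{range}(\bm{A}) = \setspan\{\bm{a}_1,\dots,\bm{a}_k\} \subseteq \bbX$, so $\bm{y} \in \bbX$, contradicting the choice of $\bm{y}$. (Equivalently, one could invoke Lemma~\ref{lem:face_PSD}, which identifies $\bbS_+(\bbX)$ as the face of $\bbS_+^n$ consisting of positive semidefinite matrices whose range is contained in $\bbX$.)

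The principal leverage of the argument is Lemma~\ref{lem:not_PSD}, which rules out any positive multiple of $\bm{J}_n$ appearing in a decomposition of a matrix in $\bbS_+(\{(1;\bm{v})\}^\perp)$; once the $t > 0$ case is eliminated, the remaining case reduces to a simple range/rank observation. I expect the $t > 0$ case to require the most care, since it is the only place where the geometry forced by $\|\bm{v}\| = 1$ (which makes $\bbR_+\bm{J}_n$ a genuine extra component of $\COP(\bbL^n)$ exposed at $(1;\bm{v})$) plays a role.
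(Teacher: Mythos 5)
Your proof is correct, but it takes a different route from the paper's. The paper chooses $\bm{y}$ not merely outside $\bbX$ but orthogonal to it, i.e., $\bm{y} \in \bbX^\perp \cap \{(1;\bm{v})\}^\perp \setminus \{\bm{0}\}$, and then argues by a single inner-product computation: pairing the putative decomposition $\bm{y}^{\otimes 2} = \bm{H}(\calV) + t\bm{J}_n$ (with $\calV \subseteq \bbX$) against $\bm{y}^{\otimes 2}$ kills the $\bm{H}(\calV)$ term by orthogonality and forces $\lVert\bm{y}\rVert^4 = t\,\bm{y}^\top\bm{J}_n\bm{y} > 0$, while the Peirce decomposition of Lemma~\ref{lem:Peirce} together with $\lVert\bm{v}\rVert = 1$ gives $\bm{y}^\top\bm{J}_n\bm{y} = -\lVert\bm{u}\rVert^2 \le 0$, a contradiction that handles both the $t>0$ and $t=0$ possibilities at once. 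You instead take an arbitrary $\bm{y} \in \{(1;\bm{v})\}^\perp \setminus \bbX$ and split on $t$: Lemma~\ref{lem:not_PSD} eliminates $t>0$ (this is legitimate here since the lemma lives in the same $\lVert\bm{v}\rVert=1$ regime), and a range argument eliminates $t=0$. Your version is more in the spirit of the neighboring Propositions~\ref{prop:face_intersection_zero_not_exposed} and \ref{prop:face_intersection_nonzero_nonexposed}, which use Lemma~\ref{lem:not_PSD} in exactly this way, and it proves the marginally stronger statement that $\bm{y}^{\otimes 2} \notin \bbS_+(\bbX) + \bbR_+\bm{J}_n$ for \emph{every} $\bm{y} \in \{(1;\bm{v})\}^\perp \setminus \bbX$; the paper's version is self-contained and avoids invoking Lemma~\ref{lem:not_PSD}, at the cost of requiring the more special choice of $\bm{y}$.
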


\begin{proof}
Since $\bbX \subsetneq \{(1;\bm{v})\}^\perp$, we have that $\bbR(1;\bm{v}) \subsetneq \bbX^\perp$.
In particular, we can decompose $\bbX^\perp $ as
$\bbR(1;\bm{v}) \oplus (\bbX^\perp \cap \{(1;\bm{v})\}^\perp)$ and
$\bbX^\perp \cap \{(1;\bm{v})\}^\perp \neq \{\bm{0}\}$.
Let $\bm{y}$ be a nonzero element in $\bbX^\perp \cap \{(1;\bm{v})\}^\perp$, so that $\bm{y}^{\otimes 2} \in \bbS_+(\{(1;\bm{v})\}^\perp)$.
It suffices to show that $\bm{y}^{\otimes 2} \not\in \bbS_+(\bbX) + \bbR_+\bm{J}_n$.

We assume that $\bm{y}^{\otimes 2} \in \bbS_+(\bbX) + \bbR_+\bm{J}_n$.
Then there exist a finite subset $\calV$ of $\bbX$ and $t \ge 0$ such that $\bm{y}^{\otimes 2} = \bm{H}(\calV) + t\bm{J}_n$.
Taking the inner product of $\bm{y}^{\otimes 2}$ with this yields $\lVert\bm{y}\rVert^4 = t\bm{y}^\top\bm{J}_n\bm{y}$.
Since $\bm{y}$ is nonzero, we have $t\bm{y}^\top\bm{J}_n\bm{y} > 0$.

On the other hand, since $\bm{y}\in \{(1;\bm{v})\}^\perp$, by Lemma~\ref{lem:Peirce}, there exist $\alpha \in \bbR$ and $\bm{u} \in \bbR^{n-1}$ such that $\bm{u}^\top\bm{v} = 0$ and $\bm{y} = \alpha(1;-\bm{v}) + (0;\bm{u}) = (\alpha;-\alpha\bm{v}+\bm{u})$.
Then it follows from $\lVert\bm{v}\rVert = 1$ and $\bm{u}^\top\bm{v} = 0$ that $\bm{y}^\top\bm{J}_n\bm{y} = \alpha^2 - \lVert-\alpha\bm{v} + \bm{u}\rVert^2 = -\lVert\bm{u}\rVert^2$.
Therefore, we get $t\bm{y}^\top\bm{J}_n\bm{y} \le 0$, which is a contradiction.
\end{proof}

\begin{proposition}\label{prop:face_intersection_nonzero_nonexposed}
Let $\bbX$ be a proper subspace of $\{(1;\bm{v})\}^\perp$.
If $(1;-\bm{v})\in \bbX$, then $\mathbb{S}_+(\bbX) + \mathbb{R}_+\bm{J}_n$ is a non-exposed face of $\COP(\bbL^n)$.
\end{proposition}

\begin{proof}
We first show that $\mathbb{S}_+(\bbX) + \mathbb{R}_+\bm{J}_n$ is a face of $\COP(\bbL^n)$.
The set $\mathbb{S}_+(\bbX) + \mathbb{R}_+\bm{J}_n$ is a nonempty convex subcone of $\COP(\bbL^n)$.
For any $\bm{A}_1,\bm{A}_2 \in \COP(\bbL^n)$, we assume that $\bm{A}_1 + \bm{A}_2 \in \bbS_+(\bbX) + \bbR_+\bm{J}_n$.
Therefore, there exist $\bm{P} \in \bbS_+(\bbX)$ and $t \ge 0$ such that $\bm{A}_1 + \bm{A}_2 = \bm{P} + t\bm{J}_n$.
For every $i=1,2$, since $\bm{A}_i \in \COP(\bbL^n)$, there exist $\bm{P}_i\in \bbS_+^n$ and $t_i \ge 0$ such that $\bm{A}_i = \bm{P}_i + t_i\bm{J}_n$.
Then we have
\begin{equation}
\bm{P}_1 + \bm{P}_2 = \bm{P} + (t-t_1-t_2)\bm{J}_n. \label{eq:P1_plus_P2_intersection_nonzero}
\end{equation}
If $t - t_1 -t_2 \neq 0$, by $\bm{P} \in \bbS_+(\bbX) \subseteq \bbS_+(\{(1;\bm{v})\}^\perp)$ and Lemma~\ref{lem:not_PSD}, we see that the left-hand side of \eqref{eq:P1_plus_P2_intersection_nonzero} is not positive semidefinite.
On the other hand, $\bm{P}_1 + \bm{P}_2 \in \bbS_+^n$ holds, which is a contradiction.
Therefore, $t - t_1 -t_2 = 0$ and then $\bm{P}_1 + \bm{P}_2 = \bm{P} \in \bbS_+(\bbX)$.
Since $\bm{P}_1,\bm{P}_2\in \bbS_+^n$ and $\bbS_+(\bbX)$ is a face of $\bbS_+^n$, we have $\bm{P}_1,\bm{P}_2\in \bbS_+(\bbX)$.
This implies that $\bm{A}_i \in \mathbb{S}_+(\bbX) + \mathbb{R}_+\bm{J}_n$ for each $i = 1,2$.
Thus, $\mathbb{S}_+(\bbX) + \mathbb{R}_+\bm{J}_n$ is a face of $\COP(\bbL^n)$.

Next, by the assumption on $\bbX$, it follows from Lemma~\ref{lem:v=1_proper_inclusion} that
\begin{equation*}
(1;-\bm{v})^{\otimes 2} \in \bbS_+(\bbX) + \bbR_+\bm{J}_n \subsetneq \bbS_+(\{(1;\bm{v})\}^\perp) + \bbR_+\bm{J}_n.
\end{equation*}
By Lemma~\ref{lem:face_intersection_zero}, the face $\bbS_+(\bbX) + \bbR_+\bm{J}_n$ is not exposed.
\end{proof}

Combining the above results, we obtain the following theorem.

\begin{theorem}\label{thm:face_intersection_nonzero}
Suppose that $\bm{v} \in \bbR^{n-1}$ satisfies $\lVert\bm{v}\rVert = 1$ and $\bbF$ is a subface of the maximal face in \eqref{eq:maximal_face_COP_v=1}.
If $\bbF \cap \bbR_+\bm{J}_n \supsetneq \{\bm{O}\}$, then there exists a subspace $\bbX \subseteq \{(1;\bm{v})\}^\perp$ such that
\begin{equation*}
\bbF = \bbS_+(\bbX) + \bbR_+\bm{J}_n.
\end{equation*}
Conversely, for any $d$-dimensional subspace $\bbX\subseteq  \{(1;\bm{v})\}^\perp$, the set $\bbS_+(\bbX) + \bbR_+\bm{J}_n$ is a $(T_d + 1)$-dimensional face of $\COP(\bbL^n)$.
Furthermore, it is exposed if and only if $\bbX = \{(1;\bm{v})\}^\perp$ or $(1;-\bm{v})\not\in \bbX$.
\end{theorem}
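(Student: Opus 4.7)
The plan is to dispatch the four pieces of the statement by assembling the results already proved in this subsection. The forward implication is immediate from part~\eqref{enum:face_intersection_nonzero} of Lemma~\ref{lem:face_COP_Ln}: under the hypothesis $\calF \cap \bbR_+\bm{J}_n \supsetneq \{\bm{O}\}$, that lemma directly supplies a subspace $\bbX\subseteq\{(1;\bm{v})\}^\perp$ with $\calF = \bbS_+(\bbX) + \bbR_+\bm{J}_n$, giving the first half of the theorem.

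For the converse, I would fix an arbitrary subspace $\bbX\subseteq\{(1;\bm{v})\}^\perp$ and split on whether $(1;-\bm{v})\in\bbX$. If $(1;-\bm{v})\notin\bbX$, Proposition~\ref{prop:face_intersection_nonzero_exposed} already asserts that $\bbS_+(\bbX)+\bbR_+\bm{J}_n$ is an exposed face of $\COP(\bbL^n)$. If $(1;-\bm{v})\in\bbX$ and $\bbX\subsetneq\{(1;\bm{v})\}^\perp$, Proposition~\ref{prop:face_intersection_nonzero_nonexposed} shows that it is a face and that the face is not exposed. The only remaining case is $\bbX = \{(1;\bm{v})\}^\perp$, in which $\bbS_+(\bbX)+\bbR_+\bm{J}_n$ is precisely the maximal face \eqref{eq:maximal_face_COP_v=1}, that is, $\COP(\bbL^n)\cap\{(1;\bm{v})^{\otimes 2}\}^\perp$, and hence exposed by construction. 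Collecting these three cases yields exactly the characterization ``exposed if and only if $\bbX=\{(1;\bm{v})\}^\perp$ or $(1;-\bm{v})\notin\bbX$''.

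For the dimension count, I would use that $\bm{J}_n\notin\setspan\bbS_+(\bbX)$. Indeed, every matrix in $\setspan\bbS_+(\bbX)$ has column space contained in $\bbX\subseteq\{(1;\bm{v})\}^\perp$, so it annihilates $(1;\bm{v})$, whereas $\bm{J}_n(1;\bm{v})=(1;-\bm{v})\neq\bm{0}$. Hence $\setspan(\bbS_+(\bbX)+\bbR_+\bm{J}_n) = \setspan\bbS_+(\bbX)\oplus\bbR\bm{J}_n$, which by Lemma~\ref{lem:face_PSD} has dimension $T_d+1$, as claimed.

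I expect no significant obstacle to arise at the theorem level itself. The genuinely delicate content, namely, that any exposing functional which vanishes on $(1;-\bm{v})^{\otimes 2}$ must in fact vanish on the whole maximal face in \eqref{eq:maximal_face_COP_v=1}, has already been isolated in Lemma~\ref{lem:face_intersection_zero} and is then consumed by Proposition~\ref{prop:face_intersection_nonzero_nonexposed}. The present theorem is therefore essentially an assembly argument: beyond those preceding results, the only new verifications it requires are the short dimension computation above and the trivial boundary case $\bbX=\{(1;\bm{v})\}^\perp$.
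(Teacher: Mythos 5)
Your proposal is correct and takes essentially the same route as the paper: the forward direction is read off from part~\eqref{enum:face_intersection_nonzero} of Lemma~\ref{lem:face_COP_Ln}, and the facehood/exposedness claims are settled by the same three-way split into Proposition~\ref{prop:face_intersection_nonzero_exposed}, Proposition~\ref{prop:face_intersection_nonzero_nonexposed}, and the trivial case $\bbX = \{(1;\bm{v})\}^\perp$ where the set is the exposed maximal face \eqref{eq:maximal_face_COP_v=1}. The only (harmless) difference is the dimension count: you show $\bm{J}_n \not\in \setspan\bbS_+(\bbX)$ directly, obtaining $\setspan(\bbS_+(\bbX)+\bbR_+\bm{J}_n) = \setspan\bbS_+(\bbX) \oplus \bbR\bm{J}_n$, whereas the paper sandwiches $\dim(\bbS_+(\bbX)+\bbR_+\bm{J}_n)$ strictly between $T_d$ and $T_d+1$; both arguments are valid.
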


\begin{proof}
From \eqref{eq:maximal_face_COP_v=1} and Propositions~\ref{prop:face_intersection_nonzero_exposed} and \ref{prop:face_intersection_nonzero_nonexposed}, we see that $\bbS_+(\bbX) + \bbR_+\bm{J}_n$ is a face of $\COP(\bbL^n)$ for any subspace $\bbX$ of $\{(1;\bm{v})\}^\perp$.
If $\bbX = \{(1;\bm{v})\}^\perp$, the face $\bbS_+(\{(1;\bm{v})\}^\perp) + \bbR_+\bm{J}_n$ is the maximal face in \eqref{eq:maximal_face_COP_v=1}, which is exposed.
If $(1;-\bm{v})\not\in \bbX$, Proposition~\ref{prop:face_intersection_nonzero_exposed} implies that the face $\bbS_+(\bbX) + \bbR_+\bm{J}_n$ is exposed.
Otherwise, i.e., if $\bbX \subsetneq \{(1;\bm{v})\}^\perp$ and $(1;-\bm{v}) \in \bbX$, Proposition~\ref{prop:face_intersection_nonzero_nonexposed} implies that the face $\bbS_+(\bbX) + \bbR_+\bm{J}_n$ is not exposed.
When $\dim\bbX = d$, since $\bbS_+(\bbX) \subsetneq \bbS_+(\bbX) + \bbR_+\bm{J}_n$ and
\begin{equation*}
T_d = \dim\bbS_+(\bbX) <  \dim(\bbS_+(\bbX) + \bbR_+\bm{J}_n) \le \dim\bbS_+(\bbX) + \dim\bbR_+\bm{J}_n =  T_d + 1,
\end{equation*}
we obtain $\dim(\bbS_+(\bbX) + \bbR_+\bm{J}_n) = T_d + 1$.
\end{proof}

\subsection{Extreme rays of $\COP(\bbL^n)$}
A consequence of the discussion so far is that we are able to identify the extreme rays of  $\COP(\bbL^n)$.
However, regarding the four types of faces described previously,
it is not clear whether the same extreme ray can be realized uniquely as a face of the form \eqref{enum:v<1} or \eqref{enum:v=1_noJ}.
In this subsection, we clarify this situation and classify the extreme rays of $\COP(\bbL^n)$ into three disjoint families of rays.

\begin{lemma}\label{lem:exist_u_v}
Let $n\ge 3$ and let $\bm{w} \in \bbR^{n-1}$ be such that $\lVert\bm{w}\rVert > 1$.
Then there exist $\bm{u}, \bm{v} \in \bbR^{n-1}$ satisfying the following system of equations:
\begin{equation}
\begin{cases}
\bm{w} = -\bm{v} + \bm{u},\\
\bm{u}^\top\bm{v} = 0,\\
\lVert\bm{v}\rVert = 1.
\end{cases} \label{eq:exist_u_v}
\end{equation}
\end{lemma}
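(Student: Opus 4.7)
The plan is to reduce the system \eqref{eq:exist_u_v} to a single existence problem for $\bm{v}$. If we set $\bm{u} \coloneqq \bm{w} + \bm{v}$, the first equation holds tautologically, while the orthogonality condition $\bm{u}^\top \bm{v} = 0$ rewrites as $\bm{w}^\top \bm{v} + \lVert\bm{v}\rVert^2 = 0$; combined with the unit-norm requirement $\lVert\bm{v}\rVert = 1$, this collapses to the single scalar constraint $\bm{w}^\top \bm{v} = -1$. So the whole lemma reduces to producing a unit vector $\bm{v} \in \bbR^{n-1}$ satisfying $\bm{w}^\top \bm{v} = -1$, after which $\bm{u}$ is defined by the above relation and automatically fulfills all three required conditions.

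To construct such a $\bm{v}$, I would decompose it into components parallel and perpendicular to $\bm{w}$, writing $\bm{v} = -\bm{w}/\lVert\bm{w}\rVert^2 + \bm{z}$ with $\bm{z}^\top \bm{w} = 0$. The coefficient $-1/\lVert\bm{w}\rVert^2$ in front of $\bm{w}$ is forced by the scalar constraint $\bm{w}^\top \bm{v} = -1$, and the unit-norm requirement then reads $\lVert\bm{z}\rVert^2 = 1 - \lVert\bm{w}\rVert^{-2}$. The hypothesis $\lVert\bm{w}\rVert > 1$ is exactly what makes this right-hand side strictly positive, so all that remains is to pick any $\bm{z}$ in the orthogonal complement $\bm{w}^\perp \subseteq \bbR^{n-1}$ of the prescribed norm.

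The only subtle point, and in fact the main obstacle, is dimensional: the complement $\bm{w}^\perp$ inside $\bbR^{n-1}$ has dimension $n - 2$, so a nonzero $\bm{z}$ of any prescribed positive norm exists only when $n \ge 3$. For $n = 2$ the conclusion actually fails, since $\lVert\bm{v}\rVert = 1$ forces $\bm{v} = \pm 1$, then $\bm{u}^\top \bm{v} = 0$ forces $\bm{u} = 0$, giving $\lVert\bm{w}\rVert = 1$ and contradicting the hypothesis. I therefore expect either an implicit restriction to $n \ge 3$ in the downstream applications of this lemma, or an added hypothesis at the point of invocation; apart from this dimensional caveat, the proof reduces to the routine linear-algebraic verification outlined above.
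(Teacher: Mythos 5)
Your proof is correct and essentially the same as the paper's: both reduce the system to a norm condition plus an orthogonality condition and then build the solution from a vector orthogonal to $\bm{w}$ (you eliminate $\bm{u}$ via $\bm{u}=\bm{w}+\bm{v}$ and solve for $\bm{v}$, while the paper eliminates $\bm{v}$ via $\bm{v}=\bm{u}-\bm{w}$ and solves for $\bm{u}$; this is a trivial symmetry). Your dimensional caveat is a genuine catch rather than a defect of your argument: the lemma as stated is indeed false for $n=2$, and the paper's own proof has the same implicit restriction, since it requires a unit vector $\bm{x}\in\bbR^{n-1}$ with $\bm{x}^\top\bm{w}=0$, which does not exist when $n-1=1$ and $\bm{w}\neq\bm{0}$.
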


\begin{proof}
We see that
\begin{equation}
\eqref{eq:exist_u_v} \Longleftrightarrow \begin{cases}
\bm{w}^\top\bm{u} = \lVert\bm{u}\rVert^2,\\
\lVert\bm{u} - \bm{w}\rVert = 1,
\end{cases} \Longleftrightarrow \begin{cases}
\lVert\bm{u}\rVert = \sqrt{\lVert\bm{w}\rVert^2-1},\\
\lVert\bm{u} - \bm{w}\rVert = 1,
\end{cases} \label{eq:exist_u_v_equiv}
\end{equation}
where we replace $\bm{v}$ with $\bm{u}-\bm{w}$ to derive the first equivalence and use
\begin{equation*}
1 =  \lVert\bm{u} - \bm{w}\rVert^2 = \lVert\bm{w}\rVert^2 - 2\bm{w}^\top\bm{u} + \lVert\bm{u}\rVert^2 = \lVert\bm{w}\rVert^2 - \lVert\bm{u}\rVert^2
\end{equation*}
to derive the second equivalence.
In conclusion, it is enough to obtain a solution to the last system in
\eqref{eq:exist_u_v_equiv}, which we do next.
Let $\bm{x} \in \bbR^{n-1}$ be a vector such that $\lVert\bm{x}\rVert = 1$ and $\bm{x}^\top\bm{w} = 0$.
(Here, we use the assumption that $n \ge 3$.)
Then we see that
\begin{equation*}
\bm{u} = \left(1 - \frac{1}{\lVert\bm{w}\rVert^2}\right)\bm{w} + \sqrt{1 - \frac{1}{\lVert\bm{w}\rVert^2}}\bm{x}
\end{equation*}
is a solution of \eqref{eq:exist_u_v_equiv}.
\end{proof}

\begin{lemma}\label{lem:v<1_family}
The set
\begin{equation}
\bigcup_{\bm{v}\in \bbR^{n-1}:\lVert\bm{v}\rVert < 1}\{\bm{x}\in\bbR^n \mid \bm{x}\in\{(1;\bm{v})\}^\perp \setminus \{\bm{0}\}\} \label{eq:COP_extreme_vnorm_less_1}
\end{equation}
equals $\bbR^n \setminus (\bbL^n \cup (-\bbL^n))$.
\end{lemma}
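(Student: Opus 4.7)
The plan is to rewrite the condition $\bm{x}\in\{(1;\bm{v})\}^\perp$ as the scalar equation $x_1 + \bm{v}^\top \bm{x}_{2:n} = 0$, and then characterize exactly when such a $\bm{v}$ with $\|\bm{v}\|<1$ exists. Recall that $\bbL^n \cup (-\bbL^n) = \{\bm{x}\in\bbR^n \mid |x_1| \ge \|\bm{x}_{2:n}\|\}$, so $\bbR^n \setminus (\bbL^n \cup (-\bbL^n))$ is precisely the set of $\bm{x}$ with $|x_1| < \|\bm{x}_{2:n}\|$ (which, in particular, forces $\bm{x} \neq \bm{0}$).

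For the inclusion ``$\subseteq$'', I would take a nonzero $\bm{x}$ in the set on the left-hand side, together with a witness $\bm{v}$ satisfying $\|\bm{v}\|<1$ and $x_1 = -\bm{v}^\top \bm{x}_{2:n}$. The Cauchy--Schwarz inequality then yields $|x_1| \le \|\bm{v}\|\cdot\|\bm{x}_{2:n}\| < \|\bm{x}_{2:n}\|$; here I should briefly note that $\bm{x}_{2:n}\neq \bm{0}$, since otherwise $x_1=0$ forces $\bm{x}=\bm{0}$, contradicting the choice of $\bm{x}$. This places $\bm{x}$ in $\bbR^n \setminus (\bbL^n \cup (-\bbL^n))$.

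For the reverse inclusion ``$\supseteq$'', I would take $\bm{x}\in\bbR^n$ with $|x_1|<\|\bm{x}_{2:n}\|$ (automatically nonzero, as $\bm{x}_{2:n}\neq \bm{0}$) and exhibit an explicit witness
\begin{equation*}
\bm{v} \;\coloneqq\; -\frac{x_1}{\|\bm{x}_{2:n}\|^2}\,\bm{x}_{2:n}.
\end{equation*}
Then $\bm{v}^\top \bm{x}_{2:n} = -x_1$, so $(1;\bm{v})^\top \bm{x} = 0$, and $\|\bm{v}\| = |x_1|/\|\bm{x}_{2:n}\| < 1$. This shows $\bm{x}$ belongs to the union in \eqref{eq:COP_extreme_vnorm_less_1}.

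There is no real obstacle here; the only thing to watch is the degenerate case $\bm{x}_{2:n}=\bm{0}$, which is handled automatically on both sides because it forces either $\bm{x}=\bm{0}$ (excluded by the nonzero condition in the union) or $|x_1|<0$ (impossible). The whole argument is essentially a one-line use of Cauchy--Schwarz together with an explicit construction of $\bm{v}$ as a scalar multiple of $\bm{x}_{2:n}$.
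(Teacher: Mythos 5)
Your proof is correct. The reverse inclusion is verbatim the paper's argument: the same explicit witness $\bm{v} = -\frac{x_1}{\lVert\bm{x}_{2:n}\rVert^2}\bm{x}_{2:n}$ with the same two verifications. The forward inclusion, however, is genuinely different and simpler than what the paper does. The paper invokes its Lemma~\ref{lem:Peirce} to decompose $\bm{x} = \alpha(\lVert\bm{v}\rVert^2;-\bm{v}) + (0;\bm{u})$ with $\bm{u}^\top\bm{v}=0$, computes $x_1^2 - \lVert\bm{x}_{2:n}\rVert^2 = \alpha^2\lVert\bm{v}\rVert^2(\lVert\bm{v}\rVert^2-1) - \lVert\bm{u}\rVert^2$, and then runs a two-case analysis (using $\bm{x}\neq\bm{0}$) to conclude this quantity is negative. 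You instead read off $x_1 = -\bm{v}^\top\bm{x}_{2:n}$ and apply Cauchy--Schwarz, $\lvert x_1\rvert \le \lVert\bm{v}\rVert\,\lVert\bm{x}_{2:n}\rVert < \lVert\bm{x}_{2:n}\rVert$, after correctly noting that $\bm{x}_{2:n}\neq\bm{0}$ (so the second inequality is strict even when $\bm{v}=\bm{0}$). That one-line argument is airtight here precisely because $\lVert\bm{v}\rVert<1$ makes the bound strict; the paper's decomposition machinery is the uniform tool it reuses for the companion Lemma~\ref{lem:v=1_family}, where $\lVert\bm{v}\rVert=1$ kills the strictness of Cauchy--Schwarz and one must track the orthogonal component $\bm{u}$ explicitly. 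So your route buys brevity for this particular lemma, while the paper's buys uniformity with the $\lVert\bm{v}\rVert=1$ case.
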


\begin{proof}
Let $\bm{x} \in \bbR^n$ be an element of the set in \eqref{eq:COP_extreme_vnorm_less_1}.
Then there exists $\bm{v}\in \bbR^{n-1}$ with $\lVert\bm{v}\rVert < 1$ such that $\bm{x}\in\{(1;\bm{v})\}^\perp \setminus \{\bm{0}\}$.
From Lemma~\ref{lem:Peirce}, there exist $\alpha \in \bbR$ and $\bm{u}\in \bbR^{n-1}$ such that $\bm{u}^\top\bm{v} = 0$ and
\begin{equation*}
\bm{x} = \alpha(\lVert\bm{v}\rVert^2;-\bm{v}) + (0;\bm{u}) = (\alpha\lVert\bm{v}\rVert^2;-\alpha\bm{v}+\bm{u}).
\end{equation*}
We have
\begin{equation}
x_1^2 - \lVert\bm{x}_{2:n}\rVert^2 = \alpha^2\lVert\bm{v}\rVert^2(\lVert\bm{v}\rVert^2 - 1) - \lVert\bm{u}\rVert^2. \label{eq:w_in_SOC_check}
\end{equation}
Since $\bm{x} \neq \bm{0}$, this implies
\begin{equation*}
0 < \lVert\bm{x}\rVert^2 = \alpha^2\lVert\bm{v}\rVert^2(\lVert\bm{v}\rVert^2+1) + \lVert\bm{u}\rVert^2.
\end{equation*}
Therefore, at least one of the two conditions  below must hold:
\begin{enumerate}[(i)]
\item $\alpha \neq 0$ and $\bm{v} \neq \bm{0}$, \label{enum:alpha_v_neq_0}
\item $\bm{u} \neq \bm{0}$. \label{enum:u_neq_0}
\end{enumerate}
If \eqref{enum:alpha_v_neq_0} holds, then since $\alpha^2\lVert\bm{v}\rVert^2 > 0$, $\lVert\bm{v}\rVert^2 - 1 < 0$, and $\lVert\bm{u}\rVert^2 \ge 0$, \eqref{eq:w_in_SOC_check} is less than $0$.
If \eqref{enum:u_neq_0} holds, then since $\alpha^2\lVert\bm{v}\rVert^2(\lVert\bm{v}\rVert^2 - 1) \le 0$ and $\lVert\bm{u}\rVert^2 > 0$, \eqref{eq:w_in_SOC_check} is less than $0$.
Overall, we obtain $\bm{x} \not\in \bbL^n \cup (-\bbL^n)$.

Conversely, suppose that $\bm{x} \not\in \bbL^n \cup (-\bbL^n)$, i.e., $\lvert x_1 \rvert <  \lVert\bm{x}_{2:n}\rVert$.
Then the vector $\bm{v} \coloneqq -\frac{x_1}{\lVert\bm{x}_{2:n}\rVert^2}\bm{x}_{2:n}$ satisfies $\lVert\bm{v}\rVert < 1$ and $\bm{x}^\top(1;\bm{v}) = 0$.
Therefore, we see that $\bm{x}$ belongs to the set in \eqref{eq:COP_extreme_vnorm_less_1}.
\end{proof}

\begin{lemma}\label{lem:v=1_family}
Let $n\ge 3$.
Then the set
\begin{equation}
\bigcup_{\bm{v}\in \bbR^{n-1}:\lVert\bm{v}\rVert=1}\{\bm{x}\in\bbR^n \mid \bm{x}\in\{(1;\bm{v})\}^\perp \setminus \{\bm{0}\},\ (1;-\bm{v})\not\in \bbR\bm{x}\} \label{eq:COP_extreme_vnorm1}
\end{equation}
equals $\bbR^n \setminus (\bbL^n \cup (-\bbL^n))$.
\end{lemma}

\begin{proof}
Let $\bm{x} \in \bbR^n$ be an element of the set in \eqref{eq:COP_extreme_vnorm1}.
Then there exists $\bm{v}\in\bbR^{n-1}$ with $\lVert\bm{v}\rVert = 1$ such that $\bm{x}\in\{(1;\bm{v})\}^\perp \setminus \{\bm{0}\}$ and $(1;-\bm{v})\not\in \bbR\bm{x}$.
From Lemma~\ref{lem:Peirce} and $\lVert\bm{v}\rVert = 1$, there exist $\alpha \in \bbR$ and $\bm{u}\in \bbR^{n-1}$ such that $\bm{u}^\top\bm{v} = 0$ and
\begin{equation*}
\bm{x} = \alpha(1;-\bm{v}) + (0;\bm{u}) = (\alpha;-\alpha\bm{v}+\bm{u}).
\end{equation*}
Note that $\bm{u} \neq \bm{0}$ by the assumption that $(1;-\bm{v})\not\in \bbR\bm{x}$.
Then, since $\alpha^2 - \lVert-\alpha\bm{v}+\bm{u}\rVert^2 = -\lVert\bm{u}\rVert^2 < 0$, we have $\bm{x} \not\in \bbL^n \cup (-\bbL^n)$.

Conversely, suppose that $\bm{x}$ satisfies $\bm{x} \not\in \bbL^n \cup (-\bbL^n)$, i.e., $\lvert x_1\rvert < \lVert\bm{x}_{2:n}\rVert$.
We consider two cases.

If $x_1 = 0$, we let $\bm{v}\in \bbR^{n-1}$ be such that $\lVert\bm{v}\rVert = 1$ and $\bm{v}^\top\bm{x}_{2:n} = 0$.
(Here, we use the assumption that $n \ge 3$.)
Then we see that $\bm{x}\in\{(1;\bm{v})\}^\perp \setminus \{\bm{0}\}$ and $(1;-\bm{v})\not\in \bbR\bm{x}$, so $\bm{x}$ belongs to \eqref{eq:COP_extreme_vnorm1}.

If $x_1 \neq 0$, we have $\bm{x} = x_1(1;\frac{\bm{x}_{2:n}}{x_1})$ and $\lVert\frac{\bm{x}_{2:n}}{x_1}\rVert > 1$.
By $n\ge 3$ and Lemma~\ref{lem:exist_u_v}, there exist $\bm{u},\bm{v} \in \bbR^{n-1}$ such that $\frac{\bm{x}_{2:n}}{x_1} = -\bm{v} + \bm{u}$, $\bm{u}^\top\bm{v} = 0$, and $\lVert\bm{v}\rVert = 1$.
Note that $\bm{u} \neq \bm{0}$.
Then it follows that
\begin{equation*}
\bm{x} = x_1(1;-\bm{v}+\bm{u}) = x_1(1;-\bm{v}) + (0;x_1\bm{u}).
\end{equation*}
Since this implies that $\bm{x}^\top(1;\bm{v}) = 0$, we see that $\bm{x} \in \{(1;\bm{v})\}^\perp \setminus \{\bm{0}\}$.
In addition, since $x_1\bm{u} \neq \bm{0}$, we obtain $(1;-\bm{v}) \not\in \bbR\bm{x}$.
Therefore, $\bm{x}$ belongs to \eqref{eq:COP_extreme_vnorm1}.
\end{proof}

By using Lemmas~\ref{lem:v<1_family} and \ref{lem:v=1_family}, we have the following three disjoint families of extreme rays of $\COP(\bbL^n)$.

\begin{proposition}\label{prop:COP_ext}
Every extreme ray of $\COP(\bbL^n)$ falls into one of the following three families:
\begin{enumerate}[(i)]
\item $\bbR_+\bm{x}^{\otimes 2}$ with $\bm{x} \in \bbR^n \setminus (\bbL^n \cup (-\bbL^n))$, \label{enum:ext_notLn}
\item $\bbR_+\bm{J}_n$, \label{enum:ext_Jn}
\item $\bbR_+\bm{x}^{\otimes 2}$ with $\bm{x} \in \partial\bbL^n \setminus \{\bm{0}\}$. \label{enum:ext_partialLn}
\end{enumerate}
Conversely, all the rays written in the form of \eqref{enum:ext_notLn}, \eqref{enum:ext_Jn}, or \eqref{enum:ext_partialLn} are extreme rays of $\COP(\bbL^n)$.
Furthermore, \eqref{enum:ext_notLn} and \eqref{enum:ext_Jn} are exposed and \eqref{enum:ext_partialLn} is not.
\end{proposition}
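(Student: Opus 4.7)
The plan is to read off all one-dimensional faces of $\COP(\bbL^n)$ from the classification established in Theorems~\ref{thm:v<1}, \ref{thm:face_intersection_zero}, and \ref{thm:face_intersection_nonzero}, and then use Lemmas~\ref{lem:v<1_family} and \ref{lem:v=1_family} to re-describe the resulting rays intrinsically in terms of their generator $\bm{x}$ rather than in terms of the auxiliary vector $\bm{v}$ labeling the enclosing maximal face.

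First, every extreme ray is a one-dimensional proper face and hence lies inside some maximal face, which by the discussion preceding \eqref{eq:maximal_face_COP} has the form $\COP(\bbL^n) \cap \{(1;\bm{v})^{\otimes 2}\}^\perp$ with $\lVert\bm{v}\rVert \le 1$. Combining the three classification theorems, any such $1$-dimensional subface must be either of the form $\bbS_+(\bbR\bm{x}) = \bbR_+\bm{x}^{\otimes 2}$ with a nonzero $\bm{x} \in \{(1;\bm{v})\}^\perp$, or equal to $\bbR_+\bm{J}_n$ (the case $\bbX = \{\bm{0}\}$ in Theorem~\ref{thm:face_intersection_nonzero}, which forces $\lVert\bm{v}\rVert = 1$).

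I would then split the first shape into two subcases. When $\lVert\bm{v}\rVert < 1$, Theorem~\ref{thm:v<1} gives exposedness for free, and Lemma~\ref{lem:v<1_family} identifies the set of admissible generators as exactly $\bbR^n \setminus (\bbL^n \cup (-\bbL^n))$, producing the exposed family \eqref{enum:ext_notLn}. When $\lVert\bm{v}\rVert = 1$, the exposedness criterion in Theorem~\ref{thm:face_intersection_zero} is $(1;-\bm{v}) \not\in \bbR\bm{x}$, and Lemma~\ref{lem:v=1_family} translates this into the same condition $\bm{x} \in \bbR^n \setminus (\bbL^n \cup (-\bbL^n))$, so this subcase merges back into family \eqref{enum:ext_notLn}. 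The complementary case $(1;-\bm{v}) \in \bbR\bm{x}$ forces $\bm{x}$ to be a nonzero multiple of $(1;-\bm{v})$ with $\lVert\bm{v}\rVert = 1$, hence $\bm{x} \in \partial\bbL^n \setminus \{\bm{0}\}$ by \eqref{eq:bdry_Ln}; this is family \eqref{enum:ext_partialLn}, non-exposed by Theorem~\ref{thm:face_intersection_zero}. The remaining ray $\bbR_+\bm{J}_n$ is family \eqref{enum:ext_Jn}, and Theorem~\ref{thm:face_intersection_nonzero} applied with $\bbX = \{\bm{0}\}$ (so that $(1;-\bm{v}) \notin \bbX$ trivially) confirms it is exposed.

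For the converse, each ray in \eqref{enum:ext_notLn}--\eqref{enum:ext_partialLn} is realized as an actual $1$-dimensional face by running the same lemmas in reverse. Pairwise disjointness of the three families is then a quick check: \eqref{enum:ext_notLn} and \eqref{enum:ext_partialLn} are disjoint because $\bm{x}^{\otimes 2} = \bm{y}^{\otimes 2}$ forces $\bm{y} = \pm\bm{x}$ and the sets $\bbR^n \setminus (\bbL^n \cup (-\bbL^n))$ and $\pm(\partial\bbL^n \setminus \{\bm{0}\})$ are disjoint, while $\bm{J}_n$ has a negative eigenvalue and hence is never of the form $\bm{x}^{\otimes 2}$. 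The main technical hurdle, already absorbed into Lemmas~\ref{lem:v<1_family} and \ref{lem:v=1_family}, is that the parameter $\bm{v}$ is highly non-unique, so the classification must be reindexed by the intrinsic generator $\bm{x}$; once this reparameterization is in hand, everything else is bookkeeping across the finitely many cases of the face classification.
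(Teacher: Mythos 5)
Your proposal is correct and follows essentially the same route as the paper's own proof: place each extreme ray inside a maximal face $\COP(\bbL^n)\cap\{(1;\bm{v})^{\otimes 2}\}^\perp$, read off the one-dimensional subfaces and their exposedness from Theorems~\ref{thm:v<1}, \ref{thm:face_intersection_zero}, and \ref{thm:face_intersection_nonzero}, and reindex the generators intrinsically via Lemmas~\ref{lem:v<1_family} and \ref{lem:v=1_family}, handling $\bbR_+\bm{J}_n$ through the case $\bbX=\{\bm{0}\}$ of Theorem~\ref{thm:face_intersection_nonzero}. The only cosmetic point is that when $(1;-\bm{v})\in\bbR\bm{x}$ the generator could be a negative multiple of $(1;-\bm{v})$, but since $\bm{x}^{\otimes 2}=(-\bm{x})^{\otimes 2}$ the ray is still $\bbR_+(1;-\bm{v})^{\otimes 2}$ and lies in the third family, so nothing is lost.
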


\begin{proof}
Let $\calR$ be an extreme ray of $\COP(\bbL^n)$ and $\bbF$ be a maximal face of $\COP(\bbL^n)$ as in \eqref{eq:maximal_face_COP} containing $\calR$, so that there exists $\bm{v}\in \bbR^{n-1}$ such that $\lVert\bm{v}\rVert \le 1$ and
\begin{equation*}
\bbF = \COP(\bbL^n) \cap \{(1;\bm{v})^{\otimes 2}\}^\perp.
\end{equation*}

We first consider the case where $\lVert\bm{v}\rVert < 1$.
Then, by Theorem~\ref{thm:v<1} and $\dim\calR = 1$, there exists $\bm{x} \in \{(1;\bm{v})\}^\perp \setminus \{\bm{0}\}$ such that $\calR = \bbS_+(\bbR\bm{x}) = \bbR_+\bm{x}^{\otimes 2}$.
From Lemma~\ref{lem:v<1_family}, we see that $\bm{x} \in \bbR^n \setminus (\bbL^n \cup (-\bbL^n))$.
Therefore, $\calR$ falls into \eqref{enum:ext_notLn}.

Next, we consider the case where $\lVert\bm{v}\rVert = 1$ and $\calR \cap \bbR_+\bm{J}_n = \{\bm{O}\}$.
Then, by Theorem~\ref{thm:face_intersection_zero}, there exists $\bm{x} \in \{(1;\bm{v})\}^\perp \setminus \{\bm{0}\}$ such that $\calR = \bbS_+(\bbR\bm{x}) = \bbR_+\bm{x}^{\otimes 2}$.
If $(1;-\bm{v}) \in \bbR\bm{x}$, then we have $\calR = \bbR_+(1;-\bm{v})^{\otimes 2}$.
By the formula for $\partial\bbL^n$ shown in \eqref{eq:bdry_Ln}, we see that $\calR$ falls into \eqref{enum:ext_partialLn}.
Otherwise, we have $n\ge 3$ since $(1;-\bm{v}) \not\in \bbR\bm{x}$ is impossible if $n = 2$.
From Lemma~\ref{lem:v=1_family}, we see that $\bm{x} \in \bbR^n \setminus (\bbL^n \cup (-\bbL^n))$.
Therefore, $\calR$ falls into \eqref{enum:ext_notLn}.

Finally, we consider the case where $\lVert\bm{v}\rVert = 1$ and $\calR \cap \bbR_+\bm{J}_n \supsetneq \{\bm{O}\}$.
Then it follows from Theorem~\ref{thm:face_intersection_nonzero} that $\calR = \bbR_+\bm{J}_n$, which falls into \eqref{enum:ext_Jn}.

Conversely, let $\bm{x} \in \bbR^n \setminus (\bbL^n \cup (-\bbL^n))$.
By Lemma~\ref{lem:v<1_family}, there exists $\bm{v} \in \bbR^{n-1}$ such that $\lVert\bm{v}\rVert  < 1$ and $\bm{x} \in \{(1;\bm{v})\}^\perp \setminus \{\bm{0}\}$.
It follows from Theorem~\ref{thm:v<1} that $\bbS_+(\bbR\bm{x}) = \bbR_+\bm{x}^{\otimes 2}$ is an exposed ray of $\COP(\bbL^n)$.
Therefore, any ray as in \eqref{enum:ext_notLn} is an exposed ray of $\COP(\bbL^n)$.

Next, let $\bm{x} \in \partial\bbL^n\setminus \{\bm{0}\}$.
Setting $\bm{v} \coloneqq -\frac{\bm{x}_{2:n}}{x_1}$, we see that $\lVert\bm{v}\rVert = 1$, $\bbR\bm{x}$ is a subspace of $\{(1;\bm{v})\}^\perp$, and $(1;-\bm{v}) \in \bbR\bm{x}$.
It then follows from Theorem~\ref{thm:face_intersection_zero} that $\bbS_+(\bbR\bm{x}) = \bbR_+\bm{x}^{\otimes 2}$ is a non-exposed extreme ray of $\COP(\bbL^n)$.
Therefore, any ray with the form of \eqref{enum:ext_partialLn} is a non-exposed extreme ray of $\COP(\bbL^n)$.

Finally, the exposedness of $\bbR_+\bm{J}_n$ follows from Theorem~\ref{thm:face_intersection_nonzero} applied to $\bbX \coloneqq \{\bm{0}\}$.
\end{proof}

\subsection{The length of a longest chain of faces and the distance to polyhedrality of $\COP(\bbL^n)$}\label{subsec:length_COP}
In this subsection, we compute the length $\ell_{\COP(\bbL^n)}$ of a longest chain of faces and the distance $\ell_{\rm poly}(\COP(\bbL^n))$ to polyhedrality of $\COP(\bbL^n)$.

\begin{proposition}\label{prop:length_COPLn}
For $n\ge 2$, we have $\ell_{\COP(\mathbb{L}^n)} = n + 2$.
\end{proposition}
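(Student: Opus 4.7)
The plan is to establish $\ell_{\COP(\bbL^n)} = n+2$ via matching lower and upper bounds, both leveraging the classification of faces of $\COP(\bbL^n)$ from Section~\ref{subsec:faces_COP}.

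For the lower bound, I fix any $\bm{v}\in\bbR^{n-1}$ with $\lVert\bm{v}\rVert=1$ and a flag of subspaces $\{\bm{0}\}=\bbX_0\subsetneq\bbX_1\subsetneq\cdots\subsetneq\bbX_{n-1}=\{(1;\bm{v})\}^\perp$ with $\dim\bbX_d=d$. By Theorem~\ref{thm:face_intersection_nonzero}, each $\bbS_+(\bbX_d)+\bbR_+\bm{J}_n$ is a face of $\COP(\bbL^n)$ of dimension $T_d+1$, and these are nested monotonically in $d$. Prepending $\{\bm{O}\}$ and appending $\COP(\bbL^n)$ then yields the explicit chain
\[
\{\bm{O}\}\subsetneq\bbR_+\bm{J}_n\subsetneq\bbS_+(\bbX_1)+\bbR_+\bm{J}_n\subsetneq\cdots\subsetneq\bbS_+(\bbX_{n-1})+\bbR_+\bm{J}_n\subsetneq\COP(\bbL^n)
\]
of $n+2$ faces.

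For the upper bound, consider any chain $\bbF_l\subsetneq\cdots\subsetneq\bbF_1$. Theorems~\ref{thm:v<1}, \ref{thm:face_intersection_zero}, and \ref{thm:face_intersection_nonzero} show that each face is of one of three types: (A) $\COP(\bbL^n)$; (B) $\bbS_+(\bbY)$ for a subspace $\bbY$ of dimension at most $n-1$; or (C) $\bbS_+(\bbX)+\bbR_+\bm{J}_n$ for a subspace $\bbX\subseteq\{(1;\bm{v})\}^\perp$ with $\lVert\bm{v}\rVert=1$. Since $\bm{J}_n$ is not positive semidefinite, the indices $i$ with $\bm{J}_n\in\bbF_i$ form an initial segment $\{1,\dots,k\}$. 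I may assume $\bbF_1=\COP(\bbL^n)$ (otherwise prepending $\COP(\bbL^n)$ strictly lengthens the chain), so that $\bbF_2,\dots,\bbF_k$ are of type (C) with underlying subspaces of dimensions $e_i$, and $\bbF_{k+1},\dots,\bbF_l$ are of type (B) with subspaces of dimensions $d_i$.

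The crucial step is to verify that $e_2>\cdots>e_k\ge d_{k+1}>\cdots>d_l$. Adapting the argument from the proof of Proposition~\ref{prop:face_intersection_nonzero_nonexposed}: if $\bbS_+(\bbY)\subseteq\bbS_+(\bbX)+\bbR_+\bm{J}_n$ with $\bbX\subseteq\{(1;\bm{v})\}^\perp$ and $\lVert\bm{v}\rVert=1$, then each $\bm{a}\in\bbY$ satisfies $\bm{a}^{\otimes 2}=\bm{Q}+t\bm{J}_n$ with $\bm{Q}\in\bbS_+(\bbX)$ and $t\ge 0$; pairing with $(1;\bm{v})^{\otimes 2}$ gives $\bm{a}\in\{(1;\bm{v})\}^\perp$, after which Lemma~\ref{lem:not_PSD} forces $t=0$ and hence $\bm{a}\in\bbX$. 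The analogous argument handles type-(C)-to-type-(C) descents and yields strict subspace inclusion (since equal subspaces produce equal type-(C) faces). Consequently, the sequence $(e_2,\dots,e_k,d_{k+1},\dots,d_l)$ has length $l-1$, takes values in $\{0,1,\dots,n-1\}$, and is strictly decreasing except possibly for one tie at the type-(C)-to-type-(B) boundary, where $\bbS_+(\bbX_{e_k})\subsetneq\bbS_+(\bbX_{e_k})+\bbR_+\bm{J}_n$ is a strict face drop with the same underlying subspace. The sequence therefore has length at most $n+1$, so $l\le n+2$. I expect the main technical obstacle to be this single ``free step'' at the boundary: it is precisely what raises the bound from $n+1$ to $n+2$, and must be justified by distinguishing strict face inclusion at the subspace level from the strict drop caused by removing $\bbR_+\bm{J}_n$.
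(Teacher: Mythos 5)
Your proof is correct and follows essentially the same strategy as the paper's: an explicit chain of length $n+2$ built from the face classification for the lower bound (you thread $\bbR_+\bm{J}_n$ through the bottom of the chain, whereas the paper appends it only at the top --- both are valid), and a counting argument for the upper bound in which the faces containing $\bm{J}_n$ form an initial segment of the chain and contribute exactly one extra step beyond a strictly decreasing ladder of dimensions. The paper organizes the upper bound by case-splitting on $\lVert\bm{v}\rVert<1$ versus $\lVert\bm{v}\rVert=1$ and counts the face dimensions $T_d$ and $T_d+1$ directly, while you count the underlying subspace dimensions with one permitted tie at the boundary; the combinatorial content is identical.
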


\begin{proof}
Let $\bm{v}_0 \in \mathbb{R}^{n-1}$ be any element satisfying $\|\bm{v}_0\|=1$.
Let
\begin{align*}
\bbX_{n-1} &\coloneqq \{(1;\bm{v}_0)\}^\perp = \mathbb{R}(1;-\bm{v}_0) \oplus
\{(0;\bm{u})\mid \bm{u}^\top\bm{v}_0 = 0\},\\
\bbX_{n-2} &\coloneqq \{(0;\bm{u}) \mid \bm{u}^\top\bm{v}_0 = 0\}
\end{align*}
and $\bbX_1,\dots,\bbX_{n-3}$ be subspaces of $\bbR^n$ such that $\{\bm{0}\} \subsetneq \bbX_1 \subsetneq \cdots \subsetneq \bbX_{n-3} \subsetneq \bbX_{n-2}$.
Note that $\dim\bbX_i = i$ for each $i = 1,\dots,n-1$.
Then we obtain the following chain of faces of $\COP(\mathbb{L}^n)$ with length $n + 2$:
\begin{equation}
\{\bm{O}\} \subsetneq \mathbb{S}_+(\bbX_1) \subsetneq \mathbb{S}_+(\bbX_2) \subsetneq \cdots \subsetneq \mathbb{S}_+(\bbX_{n-1}) \subsetneq \mathbb{S}_+(\bbX_{n-1}) + \mathbb{R}_+\bm{J}_n \subsetneq \COP(\mathbb{L}^n), \label{eq:longest_chain_COPLn}
\end{equation}
where Theorem~\ref{thm:face_intersection_zero} ensures that $\mathbb{S}_+(\bbX_1),\dots,\mathbb{S}_+(\bbX_{n-1})$ are faces of $\COP(\bbL^n)$ and Theorem~\ref{thm:face_intersection_nonzero} ensures that $\mathbb{S}_+(\bbX_{n-1}) + \mathbb{R}_+\bm{J}_n$ is a face of $\COP(\bbL^n)$.
\eqref{eq:longest_chain_COPLn} implies that $\ell_{\COP(\mathbb{L}^n)} \ge n + 2$.

To show $\ell_{\COP(\mathbb{L}^n)} \le n + 2$, let $\bbF_l \subsetneq \cdots \subsetneq \bbF_1$ be a chain of faces of $\COP(\mathbb{L}^n)$.
As we now consider the length of a longest chain of faces of $\COP(\mathbb{L}^n)$, without loss of generality, we assume that $\bbF_1 = \COP(\mathbb{L}^n)$ and $\bbF_2$ is a maximal face of $\COP(\mathbb{L}^n)$ as in \eqref{eq:maximal_face_COP}.
Thus, there exists $\bm{v} \in \mathbb{R}^{n-1}$ with $\|\bm{v}\| \le 1$ such that $\bbF_2 = \COP(\mathbb{L}^n) \cap \{(1;\bm{v})^{\otimes 2}\}^\perp$.

We first consider the case  $\|\bm{v}\| < 1$.
As mentioned in the proof of Theorem~\ref{thm:v<1}, $\bbF_2 = \mathbb{S}_+(\{(1;\bm{v})\}^\perp)$, which is linearly isomorphic to $\bbS_+^{n-1}$, and $\bbF_3,\dots,\bbF_l$ are faces of $\mathbb{S}_+(\{(1;\bm{v})\}^\perp)$.
It then follows from Example~\ref{ex:length_faces} that the chain $\bbF_l \subsetneq \cdots \subsetneq \bbF_2$ has length at most $n$.
Therefore, we have $l \le n + 1$.

Next, we consider the case $\|\bm{v}\| = 1$.
Let $i^* \in \{1,\dots,l\}$ be the maximum among the $i$ satisfying $\bm{J}_n \in \bbF_i$.
Then, by Theorems~\ref{thm:face_intersection_zero} and \ref{thm:face_intersection_nonzero}, the dimensions of $\bbF_2,\dots,\bbF_{i^*}$ are in $\{T_{n-1}+1,T_{n-2} + 1,\dots,T_0 +1\}$ and those of $\bbF_{i^*+1},\dots,\bbF_l$ are in $\{T_{n-1},T_{n-2},\dots,T_0\}$.
Therefore, we have $\dim\bbF_{i^*} \le T_{n+1-i^*} + 1$, $\dim\bbF_{i^*+ 1} \le T_{n+1-i^*}$, and $\dim\bbF_l \le T_{n-l+2}$.
Since $\dim\bbF_l \ge 0$, we see that $n-l+2 \ge 0$, or equivalently, $l \le n+2$.
\end{proof}

\begin{proposition}\label{prop:dpcopl}
The distance $\ell_{\rm poly}(\COP(\mathbb{L}^n))$ to polyhedrality of $\COP(\mathbb{L}^n)$ satisfies
\begin{equation*}
\ell_{\rm poly}(\COP(\mathbb{L}^n)) = \begin{cases}
1 & (n = 2),\\
n & (n \ge 3).
\end{cases}
\end{equation*}
\end{proposition}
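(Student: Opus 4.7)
The plan is to separate the cases $n \ge 3$ and $n = 2$, since the generic upper bound from Lemma~\ref{lem:bound_lpolyK_lK} turns out to be tight in the former but loose in the latter.

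For the upper bound when $n \ge 3$, I would simply combine Lemma~\ref{lem:bound_lpolyK_lK} with Proposition~\ref{prop:length_COPLn}: since $\COP(\bbL^n)$ is pointed and hence not a subspace,
\[
\ell_{\rm poly}(\COP(\bbL^n)) \le \ell_{\COP(\bbL^n)} - 2 = (n+2) - 2 = n.
\]

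For the matching lower bound when $n \ge 3$, I would recycle the chain \eqref{eq:longest_chain_COPLn} from the proof of Proposition~\ref{prop:length_COPLn} and truncate its low-dimensional (polyhedral) tail. By Lemma~\ref{lem:face_PSD}, each $\bbS_+(\bbX_i)$ is linearly isomorphic to $\bbS_+^i$, which is non-polyhedral for every $i \ge 2$, since its boundary $\{\bm{A} \in \bbS_+^i \mid \det \bm{A} = 0\}$ carries a continuum of rank-one extreme rays. Moreover, the two larger faces $\bbS_+(\bbX_{n-1}) + \bbR_+\bm{J}_n$ and $\COP(\bbL^n)$ both contain $\bbS_+(\bbX_{n-1})$ as a subface, so they are non-polyhedral as well (because every subface of a polyhedral cone would be polyhedral). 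Consequently the subchain
\[
\bbS_+(\bbX_2) \subsetneq \cdots \subsetneq \bbS_+(\bbX_{n-1}) \subsetneq \bbS_+(\bbX_{n-1}) + \bbR_+\bm{J}_n \subsetneq \COP(\bbL^n)
\]
has length $n$ and consists entirely of non-polyhedral faces, giving $\ell_{\rm poly}(\COP(\bbL^n)) \ge n$.

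For $n = 2$, Lemma~\ref{lem:bound_lpolyK_lK} only yields the loose bound $\le 2$, so I would instead argue from the full classification: Theorems~\ref{thm:v<1}, \ref{thm:face_intersection_zero}, and \ref{thm:face_intersection_nonzero} imply that every proper face of $\COP(\bbL^2)$ has dimension at most $T_1 + 1 = 2$ and is therefore polyhedral, while $\COP(\bbL^2)$ itself is non-polyhedral (it has infinitely many extreme rays by Proposition~\ref{prop:COP_ext}). Hence the only non-polyhedral face of $\COP(\bbL^2)$ is $\COP(\bbL^2)$ itself, so $\ell_{\rm poly}(\COP(\bbL^2)) = 1$. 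The main subtlety I expect is recognizing that $\bbS_+^i$ is already non-polyhedral at $i = 2$ (not only for $i \ge 3$), which is what allows the chain for $n \ge 3$ to start as early as $\bbS_+(\bbX_2)$ and attain the full length $n$; for $n = 2$ no index $i \ge 2$ is available inside a proper face, which is precisely why that case degenerates.
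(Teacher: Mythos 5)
Your proposal is correct and follows essentially the same route as the paper: the upper bound for $n\ge 3$ via Lemma~\ref{lem:bound_lpolyK_lK} together with Proposition~\ref{prop:length_COPLn}, the lower bound by truncating the chain \eqref{eq:longest_chain_COPLn} at the non-polyhedral face $\bbS_+(\bbX_2)\cong\bbS_+^2$, and for $n=2$ the observation that every proper face has dimension at most $2$ (hence is polyhedral) while $\COP(\bbL^2)$ itself is non-polyhedral by Proposition~\ref{prop:COP_ext}. The only cosmetic difference is that for $n=2$ the paper gets the dimension bound on proper faces directly from $\dim\COP(\bbL^2)=3$ rather than from the classification theorems, which changes nothing of substance.
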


\begin{proof}
We first consider the case $n = 2$.
The cone $\COP(\bbL^2)$ is not polyhedral since, as shown in Proposition~\ref{prop:COP_ext}, it has infinitely many extreme rays.
In addition, since the dimension of $\COP(\bbL^2)$ is $3$, any face strictly contained in $\COP(\bbL^2)$ has dimension less than or equal to $2$, so it is polyhedral.
Therefore, we have $\ell_{\rm poly}(\COP(\bbL^2)) = 1$.

We next consider the case of $n \ge 3$.
We again consider the chain~\eqref{eq:longest_chain_COPLn} and focus on the face $\mathbb{S}_+(\bbX_2)$.
Since $\dim\bbX_2 = 2$, the face $\mathbb{S}_+(\bbX_2)$ is linearly isomorphic to $\mathbb{S}_+^2$, which is not polyhedral.
Therefore, its subchain
\begin{equation*}
\mathbb{S}_+(\bbX_2) \subsetneq \cdots \subsetneq \mathbb{S}_+(\bbX_{n-1}) \subsetneq \mathbb{S}_+(\bbX_{n-1}) + \mathbb{R}_+\bm{J}_n \subsetneq \COP(\mathbb{L}^n)
\end{equation*}
consists of $n$ non-polyhedral faces of $\COP(\bbL^n)$, so we have $\ell_{\rm poly}(\COP(\bbL^n)) \ge n$.
On the other hand, by Lemma~\ref{lem:bound_lpolyK_lK} and Proposition~\ref{prop:length_COPLn}, we have
\begin{equation*}
\ell_{\rm poly}(\COP(\mathbb{L}^n)) \le \ell_{\COP(\mathbb{L}^n)} -2  = n.
\end{equation*}
Hence, we obtain the desired result.
\end{proof}

\section{Facial structure of $\CP(\bbL^n)$}\label{sec:CP}
As shown in \eqref{eq:CP_Ln}, $\CP(\bbL^n)$ can be represented as the intersection of the positive semidefinite cone $\bbS_+^n$ and the polyhedral cone $\{\bm{A} \in \bbS^n \mid \langle\bm{A},\bm{J}_n\rangle \ge 0\}$.
In general, it is known (see, for example, \cite[Theorem~4.3]{Dubins1962} and \cite[Proposition~\mbox{18}]{LMT2018}) that for given closed convex cones $\bbK_1$ and $\bbK_2$, the intersection of a face of $\bbK_1$ and that of $\bbK_2$ is a face of $\bbK_1 \cap \bbK_2$ and vice versa.
The faces of the cone $\bbS_+^n$ are given by Lemma~\ref{lem:face_PSD} and the faces of the polyhedral cone $\{\bm{A} \in \bbS^n \mid \langle\bm{A},\bm{J}_n\rangle \ge 0\}$ are obtained by replacing the symbol ``$\ge$'' with an element of $\{\ge,=\}$; see \cite[Proposition~\mbox{3.4}]{KV2018}, for example.
Therefore, the faces of $\CP(\bbL^n)$ are
\begin{equation}
\bbS_+(\bbX) \cap \{\bm{A} \in \bbS^n \mid \langle\bm{A},\bm{J}_n\rangle \mathbin{\diamondsuit} 0\}, \label{eq:face_CP}
\end{equation}
where $\bbX$ is a subspace of $\bbR^n$ and $\diamondsuit \in \{\ge,=\}$.

We can express the faces in \eqref{eq:face_CP} as completely positive cones over certain sets.
The first step towards this idea is the following lemma, see also \cite[Corollary~4]{SZ2003} for a related result.

\begin{lemma}\label{lem:rank-1_decomposition}
Let $\bbX$ be a subspace of $\bbR^n$, $\bm{A}$ be a rank-$r$ matrix in $\bbS_+(\bbX)$, and $\bm{Q} \in \bbS^n$.
Then there exist $\bm{a}_1,\dots,\bm{a}_r \in \bbX$ such that $\bm{A} = \sum_{i=1}^r\bm{a}_i^{\otimes 2}$ and $\bm{a}_i^\top\bm{Q}\bm{a}_i = \frac{\langle \bm{A},\bm{Q}\rangle}{r}$ for all $i = 1,\dots,r$.
\end{lemma}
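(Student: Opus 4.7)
My approach follows the classical ``equalization by rotation'' argument (in the spirit of Sturm and Zhang). Set $\mu \coloneqq \langle \bm{A},\bm{Q}\rangle/r$. The plan has three steps.

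First, I would produce an initial rank-$r$ decomposition $\bm{A}=\sum_{i=1}^r \bm{b}_i^{\otimes 2}$ with each $\bm{b}_i\in\bbX$. Since $\bm{A}\in\bbS_+(\bbX)$, for any $\bm{x}\in\bbR^n$ the image $\bm{A}\bm{x}=\sum_j(\bm{a}_j^\top \bm{x})\bm{a}_j$ lies in $\bbX$, so the range of $\bm{A}$ is a subspace of $\bbX$. The spectral decomposition of $\bm{A}$ then yields orthonormal eigenvectors $\bm{v}_1,\dots,\bm{v}_r\in\bbX$ with positive eigenvalues $\lambda_1,\dots,\lambda_r$, and taking $\bm{b}_i=\sqrt{\lambda_i}\,\bm{v}_i$ gives the desired initial decomposition. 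Setting $q_i\coloneqq \bm{b}_i^\top\bm{Q}\bm{b}_i$, one has $\sum_{i=1}^r q_i=\langle \bm{A},\bm{Q}\rangle=r\mu$.

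Second, I would establish a rotation lemma: for any two vectors $\bm{u},\bm{v}\in\bbX$ and any scalar $c$ in the closed interval between $\bm{u}^\top\bm{Q}\bm{u}$ and $\bm{v}^\top\bm{Q}\bm{v}$, there exist $\bm{u}',\bm{v}'\in\setspan\{\bm{u},\bm{v}\}\subseteq\bbX$ with $\bm{u}'^{\otimes 2}+\bm{v}'^{\otimes 2}=\bm{u}^{\otimes 2}+\bm{v}^{\otimes 2}$ and $\bm{u}'^\top\bm{Q}\bm{u}'=c$. The proof is by parametrizing $\bm{u}(\theta)\coloneqq \cos\theta\,\bm{u}+\sin\theta\,\bm{v}$ and $\bm{v}(\theta)\coloneqq -\sin\theta\,\bm{u}+\cos\theta\,\bm{v}$: a direct expansion shows $\bm{u}(\theta)^{\otimes 2}+\bm{v}(\theta)^{\otimes 2}=\bm{u}^{\otimes 2}+\bm{v}^{\otimes 2}$ for all $\theta$, while $\theta\mapsto \bm{u}(\theta)^\top\bm{Q}\bm{u}(\theta)$ is continuous and takes the values $\bm{u}^\top\bm{Q}\bm{u}$ at $\theta=0$ and $\bm{v}^\top\bm{Q}\bm{v}$ at $\theta=\pi/2$, so the intermediate value theorem supplies the desired $\theta$.

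Third, I would iteratively equalize. If not all $q_i$ equal $\mu$, the identity $\sum_i q_i=r\mu$ forces some $q_i<\mu$ and some $q_j>\mu$, so $\mu$ lies strictly between them. Apply the rotation lemma to $(\bm{b}_i,\bm{b}_j)$ with $c=\mu$ to obtain $(\bm{b}_i',\bm{b}_j')$ with $\bm{b}_i'^\top\bm{Q}\bm{b}_i'=\mu$, then replace $\bm{b}_i,\bm{b}_j$ by $\bm{b}_i',\bm{b}_j'$ in the decomposition. By the invariance part of the rotation lemma the identity $\sum_i \bm{b}_i^{\otimes 2}=\bm{A}$ is preserved, membership in $\bbX$ is preserved, and the indices not touched keep their previous values $q_k$; in particular, the set of indices already equal to $\mu$ strictly grows by at least one. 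After at most $r-1$ iterations, all but one index satisfies $q_i=\mu$, and conservation $\sum_i q_i=r\mu$ forces the final index to also equal $\mu$.

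The only real obstacle is bookkeeping: ensuring that the rotation preserves the sum of outer products, membership in $\bbX$, and the running count of equalized entries. All three are immediate from the construction in the rotation lemma and the fact that at each step we modify only two vectors in $\setspan\{\bm{b}_i,\bm{b}_j\}\subseteq\bbX$.
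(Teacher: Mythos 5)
Your proof is correct, and the core of it---the pairwise ``equalization by rotation''---is essentially the paper's own argument: your pair $(\bm{u}(\theta),\bm{v}(\theta))$ is exactly the paper's pair $\bar{\bm{a}}_j=(\bm{a}_j+t\bm{a}_k)/\sqrt{1+t^2}$, $\bar{\bm{a}}_k=(\bm{a}_k-t\bm{a}_j)/\sqrt{1+t^2}$ under the substitution $t=\tan\theta$, with the intermediate value theorem playing the role of the paper's nonnegative-discriminant computation, and your counting argument for termination matches the paper's. Where you genuinely diverge is in producing the initial rank-$r$ decomposition with vectors in $\bbX$. The paper routes this through the isomorphism $\phi(\bm{A})=\bm{P}^\top\bm{A}\bm{P}$ of Lemma~\ref{lem:face_PSD} and devotes two auxiliary lemmas (Lemmas~\ref{lem:rank_invariant_1} and~\ref{lem:rank_PtAP}) to verifying that $\phi$ preserves rank, so that a rank-$r$ Gram decomposition of $\phi(\bm{A})\in\bbS_+^d$ can be pulled back to $\bbX$. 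You instead note that the range of $\bm{A}$ lies in $\bbX$ (because $\bm{A}\bm{x}=\sum_j(\bm{a}_j^\top\bm{x})\bm{a}_j$ for some decomposition with $\bm{a}_j\in\bbX$), hence each eigenvector $\bm{v}_i$ with positive eigenvalue, being $\lambda_i^{-1}\bm{A}\bm{v}_i$, lies in $\bbX$, and the spectral decomposition immediately gives $\bm{A}=\sum_{i=1}^r(\sqrt{\lambda_i}\,\bm{v}_i)^{\otimes 2}$. This is shorter and bypasses the rank-invariance machinery entirely; what the paper's route buys is merely the reuse of the isomorphism it has already set up. Both arguments are valid.
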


We need a few auxiliary lemmas to prove Lemma~\ref{lem:rank-1_decomposition}.
Let $d \coloneqq \dim\bbX$, $\bm{P} \in \bbR^{n\times d}$ be the matrix whose columns consist of an orthonormal basis for $\bbX$, and $\phi \colon \setspan\bbS_+(\bbX) \to \bbS^d$ be the linear isomorphism defined as \eqref{eq:isom_psd}, i.e., $\phi(\bm{A}) = \bm{P}^\top\bm{A}\bm{P}$ for each $\bm{A} \in \setspan\bbS_+(\bbX)$.

\begin{lemma}\label{lem:rank_invariant_1}
Let $\bm{A}\in \bbS_+(\bbX)$ and $\bm{B} \in \bbR^{n\times m}$ be an arbitrary matrix where $m$ is a positive integer.
Then we have $\rank(\bm{P}^\top\bm{A}\bm{B}) = \rank(\bm{A}\bm{B})$.
\end{lemma}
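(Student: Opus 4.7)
The plan is to exploit the fact that any $\bm{A}\in\bbS_+(\bbX)$ has its range contained in $\bbX$, so that projecting onto $\bbX$ via $\bm{P}\bm{P}^\top$ acts as the identity on the columns of $\bm{A}\bm{B}$. Combined with the injectivity of left-multiplication by $\bm{P}$ (its columns are orthonormal), this will immediately give the claimed rank equality.

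More concretely, I would proceed in the following steps. First, I would note that because $\bm{A}\in\bbS_+(\bbX)$, Lemma~\ref{lem:face_PSD} (applied with $\bbY=\bbX$) yields $\bm{S}\in\bbS_+^d$ such that $\bm{A}=\bm{P}\bm{S}\bm{P}^\top$. Consequently every column of $\bm{A}\bm{B}=\bm{P}\bm{S}\bm{P}^\top\bm{B}$ lies in the range of $\bm{P}$, i.e., in $\bbX$. Second, since $\bm{P}$ has orthonormal columns, $\bm{P}^\top\bm{P}=\bm{I}_d$, and so $\bm{P}\bm{P}^\top$ is the orthogonal projector onto $\bbX$; it therefore acts as the identity on the columns of $\bm{A}\bm{B}$, giving the identity
\begin{equation*}
\bm{A}\bm{B}=\bm{P}\bm{P}^\top\bm{A}\bm{B}=\bm{P}(\bm{P}^\top\bm{A}\bm{B}).
\end{equation*}
Third, orthonormality of the columns of $\bm{P}$ also implies that the linear map $\bm{X}\mapsto\bm{P}\bm{X}$ from $\bbR^{d\times m}$ to $\bbR^{n\times m}$ is injective (it is an isometry with respect to the Frobenius norm), so it preserves rank. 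Applying this to $\bm{X}=\bm{P}^\top\bm{A}\bm{B}$ yields
\begin{equation*}
\rank(\bm{A}\bm{B})=\rank\bigl(\bm{P}(\bm{P}^\top\bm{A}\bm{B})\bigr)=\rank(\bm{P}^\top\bm{A}\bm{B}),
\end{equation*}
which is the desired equality.

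There is no real obstacle here; the argument hinges on the single observation that $\bm{A}\in\bbS_+(\bbX)$ forces the range of $\bm{A}$ (and therefore of $\bm{A}\bm{B}$) into the subspace $\bbX$ on which $\bm{P}$ provides a linear-isometry parametrization. The only point worth double-checking is that $\bm{P}\bm{P}^\top$ is well-defined as the projector onto $\bbX$, which is guaranteed by the orthonormality assumption made when introducing $\bm{P}$ right before the lemma.
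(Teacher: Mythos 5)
Your proof is correct and rests on the same two facts as the paper's: the representation $\bm{A}=\bm{P}\bm{S}\bm{P}^\top$ from Lemma~\ref{lem:face_PSD} and the orthonormality relation $\bm{P}^\top\bm{P}=\bm{I}_d$. The only (cosmetic) difference is packaging: the paper proves $\Ker(\bm{P}^\top\bm{A}\bm{B})=\Ker(\bm{A}\bm{B})$ and invokes rank--nullity, whereas you write $\bm{A}\bm{B}=\bm{P}(\bm{P}^\top\bm{A}\bm{B})$ and use that left-multiplication by the injective map $\bm{P}$ preserves rank.
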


\begin{proof}
By the rank--nullity theorem, it is sufficient to show that $\Ker(\bm{P}^\top\bm{A}\bm{B})$, the kernel space of $\bm{P}^\top\bm{A}\bm{B}$, is equal to $\Ker(\bm{A}\bm{B})$.
Since $\Ker(\bm{A}\bm{B}) \subseteq \Ker(\bm{P}^\top\bm{A}\bm{B})$ holds, we prove the converse inclusion.
We assume that $\bm{x} \in \bbR^m$ satisfies $\bm{P}^\top\bm{A}\bm{B}\bm{x} =  \bm{0}$.
Since $\bm{A} \in \bbS_+(\bbX)$ and $\bbS_+(\bbX) = \phi^{-1}(\bbS_+^d)$, there exists $\bm{S} \in \bbS_+^d$ such that $\bm{A} = \bm{P}\bm{S}\bm{P}^\top$.
Then we have
\begin{equation}
\bm{0} = \bm{P}^\top\bm{A}\bm{B}\bm{x} = \bm{P}^\top\bm{P}\bm{S}\bm{P}^\top\bm{B}\bm{x} = \bm{S}\bm{P}^\top\bm{B}\bm{x}. \label{eq:proof_ker_eq}
\end{equation}
Multiplying \eqref{eq:proof_ker_eq} from the left by $\bm{P}$, we have $\bm{A}\bm{B}\bm{x} = \bm{0}$.
Thus, we obtain the desired inclusion.
\end{proof}

\begin{lemma}\label{lem:rank_PtAP}
For $\bm{A} \in \bbS_+(\bbX)$, we have $\rank(\bm{A}) = \rank(\phi(\bm{A}))$.
\end{lemma}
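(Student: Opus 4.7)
The plan is to exploit the fact that $\bm{P}$ has orthonormal columns, so $\bm{P}^\top\bm{P}=\bm{I}_d$, together with the explicit formula $\phi^{-1}(\bm{S})=\bm{P}\bm{S}\bm{P}^\top$ from Lemma~\ref{lem:face_PSD}. First, since $\bm{A}\in\bbS_+(\bbX)=\phi^{-1}(\bbS_+^d)$, I would write $\bm{A}=\bm{P}\bm{S}\bm{P}^\top$ for some $\bm{S}\in\bbS_+^d$, and observe that $\phi(\bm{A})=\bm{P}^\top\bm{P}\bm{S}\bm{P}^\top\bm{P}=\bm{S}$.

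Next, it suffices to show $\rank(\bm{A})=\rank(\bm{S})$. The cleanest route is to apply Lemma~\ref{lem:rank_invariant_1} with $\bm{B}=\bm{P}$, which yields $\rank(\phi(\bm{A}))=\rank(\bm{P}^\top\bm{A}\bm{P})=\rank(\bm{A}\bm{P})$. Using $\bm{A}=\bm{P}\bm{S}\bm{P}^\top$ and $\bm{P}^\top\bm{P}=\bm{I}_d$, we get $\bm{A}\bm{P}=\bm{P}\bm{S}$. Since $\bm{P}$ has full column rank $d$, left-multiplication by $\bm{P}$ does not change rank, so $\rank(\bm{P}\bm{S})=\rank(\bm{S})$. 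In parallel, $\rank(\bm{A})=\rank(\bm{P}\bm{S}\bm{P}^\top)=\rank(\bm{S})$ for the same reason (both $\bm{P}$ and $\bm{P}^\top$ have full rank $d$). Combining these equalities yields $\rank(\bm{A})=\rank(\phi(\bm{A}))$.

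There is no real obstacle here; the argument is essentially an invariance-of-rank-under-multiplication-by-full-rank-matrices calculation, tailored to the orthonormal basis matrix $\bm{P}$. The only mild subtlety is being explicit that $\bm{A}\in\bbS_+(\bbX)$ is needed to ensure the representation $\bm{A}=\bm{P}\bm{S}\bm{P}^\top$ (so that the range of $\bm{A}$ lies inside $\bbX$); without that, $\bm{P}^\top\bm{A}\bm{P}$ can have strictly smaller rank than $\bm{A}$ in general. This is precisely the content that Lemma~\ref{lem:rank_invariant_1} packages for us, so invoking it keeps the proof short and self-contained.
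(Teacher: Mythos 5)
Your proof is correct, and it differs mildly but genuinely from the paper's in its second half. Both arguments start the same way, invoking Lemma~\ref{lem:rank_invariant_1} with $\bm{B}=\bm{P}$ to get $\rank(\phi(\bm{A}))=\rank(\bm{P}^\top\bm{A}\bm{P})=\rank(\bm{A}\bm{P})$. The paper then finishes with a transpose trick: $\rank(\bm{A}\bm{P})=\rank((\bm{A}\bm{P})^\top)=\rank(\bm{P}^\top\bm{A})$ by symmetry of $\bm{A}$, followed by a second application of Lemma~\ref{lem:rank_invariant_1} with $\bm{B}$ equal to the identity to conclude $\rank(\bm{P}^\top\bm{A})=\rank(\bm{A})$. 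You instead make the factorization $\bm{A}=\bm{P}\bm{S}\bm{P}^\top$ explicit (so $\phi(\bm{A})=\bm{S}$ and $\bm{A}\bm{P}=\bm{P}\bm{S}$) and finish with the elementary fact that multiplication by the full-column-rank $\bm{P}$ (resp.\ full-row-rank $\bm{P}^\top$) preserves rank, giving $\rank(\bm{A}\bm{P})=\rank(\bm{S})=\rank(\bm{P}\bm{S}\bm{P}^\top)=\rank(\bm{A})$. Your route is slightly more self-contained: once you have written $\phi(\bm{A})=\bm{S}$ and $\bm{A}=\bm{P}\bm{S}\bm{P}^\top$, the appeal to Lemma~\ref{lem:rank_invariant_1} is in fact dispensable, since the full-rank-multiplication argument alone yields $\rank(\phi(\bm{A}))=\rank(\bm{S})=\rank(\bm{A})$; the paper's version avoids handling the factorization directly by reusing its auxiliary lemma twice. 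Your closing remark about why membership in $\bbS_+(\bbX)$ is needed (otherwise $\bm{P}^\top\bm{A}\bm{P}$ can drop rank) is accurate and matches the role that hypothesis plays in Lemma~\ref{lem:rank_invariant_1}.
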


\begin{proof}
It follows that
\begin{align*}
\rank(\phi(\bm{A})) &= \rank(\bm{P}^\top\bm{A}\bm{P})\\
&= \rank(\bm{A}\bm{P})\\
&= \rank((\bm{A}\bm{P})^\top)\\
&= \rank(\bm{P}^\top\bm{A})\\
&= \rank(\bm{A}),
\end{align*}
where we use Lemma~\ref{lem:rank_invariant_1} by setting $\bm{B}$ to $\bm{P}$ to derive the second equality, use the symmetry of $\bm{A}$ to derive the fourth equality, and use Lemma~\ref{lem:rank_invariant_1} again by setting $\bm{B}$ to the identity matrix to obtain the last equality.
\end{proof}

\begin{proof}[Proof of Lemma~\ref{lem:rank-1_decomposition}]
The proof of Lemma~\ref{lem:rank-1_decomposition} is similar to that of \cite[Corollary~4]{SZ2003}, but we provide the details here.
First, we show that there exist $\bm{a}_1,\dots,\bm{a}_r \in \bbX$ such that $\bm{A} = \sum_{i=1}^r\bm{a}_i^{\otimes 2}$.
Since $\phi(\bm{A}) \in \bbS_+^d$ and $\rank(\phi(\bm{A})) = r$ by Lemma~\ref{lem:rank_PtAP},
there exist $\bm{s}_1,\dots,\bm{s}_r \in \bbR^d$ such that $\phi(\bm{A}) = \sum_{i=1}^r\bm{s}_i^{\otimes 2}$.
Then we have
\begin{equation*}
\bm{A} = \phi^{-1}\left(\sum_{i=1}^r\bm{s}_i^{\otimes 2}\right) = \sum_{i=1}^r(\bm{P}\bm{s}_i)^{\otimes 2} \in \bbS_+(\bbX),
\end{equation*}
where the membership results from the definition of $\bm{P}$.

Next, suppose that there exists $j \in \{1,\dots,r\}$ such that  $\bm{a}_j^\top\bm{Q}\bm{a}_j \neq \frac{\langle \bm{A},\bm{Q}\rangle}{r}$.
Since all the $\bm{a}_j^\top\bm{Q}\bm{a}_j$ sum to $\langle \bm{A},\bm{Q}\rangle$,
there must exist another $k \in \{1,\dots,r\} \setminus \{j\}$ such that
\begin{equation*}
\left(\bm{a}_j^\top\bm{Q}\bm{a}_j - \frac{\langle \bm{A},\bm{Q}\rangle}{r}\right)\left(\bm{a}_k^\top\bm{Q}\bm{a}_k - \frac{\langle \bm{A},\bm{Q}\rangle}{r}\right) < 0,
\end{equation*}
that is, if $\bm{a}_j^\top\bm{Q}\bm{a}_j$ is less/greater than the average, there is another term that is, respectively, greater/less than the average.
Let $t\in \bbR$ be such that
\begin{equation*}
(\bm{a}_j + t\bm{a}_k)^\top\bm{Q}(\bm{a}_j + t\bm{a}_k) = \frac{\langle \bm{A},\bm{Q}\rangle}{r}(1 + t^2),
\end{equation*}
which has a solution since the discriminant of its associated quadratic polynomial is nonnegative.
Let
\begin{align*}
\bar{\bm{a}}_j &\coloneqq \frac{\bm{a}_j + t\bm{a}_k}{\sqrt{1 + t^2}} \in \bbX,\\
\bar{\bm{a}}_k &\coloneqq \frac{\bm{a}_k - t\bm{a}_j}{\sqrt{1 + t^2}} \in \bbX.
\end{align*}
Then since $\bar{\bm{a}}_j^{\otimes 2} + \bar{\bm{a}}_k^{\otimes 2} = \bm{a}_j^{\otimes 2} + \bm{a}_k^{\otimes 2}$, we have
\begin{equation*}
\bm{A} = \bar{\bm{a}}_j^{\otimes 2} + \bar{\bm{a}}_k^{\otimes 2}+\sum_{\substack{i=1\\i \not \in  \{j,k\}}}^r \bm{a}_i^{\otimes 2}.
\end{equation*}
In addition, it follows from the definitions of $t$ and $\bar{\bm{a}}_j$ that
\begin{equation*} \bar{\bm{a}}_j^\top\bm{Q}\bar{\bm{a}}_j = \frac{\langle \bm{A},\bm{Q}\rangle}{r}.
\end{equation*}
To sum up, if there exists $j \in \{1,\dots,r\}$ such that $\bm{a}_j^\top\bm{Q}\bm{a}_j \neq \frac{\langle \bm{A},\bm{Q}\rangle}{r}$, then by applying the above procedure, we can decrease the number of indices $i$ that do not satisfy the equation $\bm{a}_i^\top\bm{Q}\bm{a}_i = \frac{\langle \bm{A},\bm{Q}\rangle}{r}$.
Therefore, by carrying out the procedure at most $r$ times, we obtain the desired decomposition.
\end{proof}

\begin{proposition}
Let $\bbX$ be a subspace of $\bbR^n$.
Then we have the following equations:
\begin{enumerate}[(i)]
\item $\bbS_+(\bbX) \cap \{\bm{A} \in \bbS^n \mid \langle\bm{A},\bm{J}_n\rangle \ge 0\} = \CP(\bbX \cap \bbL^n)$, \label{enum:face_CP_ge}
\item $\bbS_+(\bbX) \cap \{\bm{A} \in \bbS^n \mid \langle\bm{A},\bm{J}_n\rangle = 0\} = \CP(\bbX \cap \partial\bbL^n)$. \label{enum:face_CP_eq}
\end{enumerate}
\end{proposition}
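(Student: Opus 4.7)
The plan is to prove both inclusions in each identity, treating the ``$\supseteq$'' direction as a direct verification and using Lemma~\ref{lem:rank-1_decomposition} to handle the harder ``$\subseteq$'' direction.

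For the ``$\supseteq$'' direction in both parts, I would start with any $\bm{A}\in \CP(\bbX\cap \bbL^n)$ (resp.\ $\CP(\bbX\cap \partial\bbL^n)$), write $\bm{A}=\sum_{i=1}^k \bm{a}_i^{\otimes 2}$ with each $\bm{a}_i$ in $\bbX\cap\bbL^n$ (resp.\ $\bbX\cap \partial\bbL^n$), and observe that $\bm{a}_i\in\bbX$ gives $\bm{A}\in\bbS_+(\bbX)$, while $\bm{a}_i^\top\bm{J}_n\bm{a}_i= a_{i,1}^2-\|\bm{a}_{i,2:n}\|^2$ is nonnegative (resp.\ zero) because of the membership in $\bbL^n$ (resp.\ $\partial\bbL^n$). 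Summing gives $\langle\bm{A},\bm{J}_n\rangle\ge 0$ (resp.\ $=0$), as needed.

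The ``$\subseteq$'' direction is the substantive part, and the key tool is Lemma~\ref{lem:rank-1_decomposition} applied with $\bm{Q}=\bm{J}_n$. Given $\bm{A}\in \bbS_+(\bbX)$ with $\langle\bm{A},\bm{J}_n\rangle\mathbin{\diamondsuit} 0$ and $r\coloneqq \rank(\bm{A})$, the lemma furnishes vectors $\bm{a}_1,\dots,\bm{a}_r\in\bbX$ with $\bm{A}=\sum_{i=1}^r\bm{a}_i^{\otimes 2}$ and
\begin{equation*}
\bm{a}_i^\top\bm{J}_n\bm{a}_i \;=\;\frac{\langle\bm{A},\bm{J}_n\rangle}{r} \quad\text{for every }i=1,\dots,r.
\end{equation*}
For part (i), the right-hand side is nonnegative, so every $\bm{a}_i$ satisfies $a_{i,1}^2\ge \|\bm{a}_{i,2:n}\|^2$, which places $\bm{a}_i\in\bbL^n\cup(-\bbL^n)$. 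For part (ii), the right-hand side equals zero, so $\bm{a}_i\in \partial\bbL^n\cup(-\partial\bbL^n)$. In either case, by replacing $\bm{a}_i$ with $-\bm{a}_i$ whenever necessary (which leaves $\bm{a}_i^{\otimes 2}$ unchanged), we obtain a decomposition with all summands in $\bbX\cap \bbL^n$ (resp.\ $\bbX\cap \partial\bbL^n$), proving the desired membership.

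The main conceptual obstacle is precisely why this uniform-distribution step is needed: a generic decomposition $\bm{A}=\sum_i\bm{a}_i^{\otimes 2}$ with $\bm{a}_i\in\bbX$ and $\langle\bm{A},\bm{J}_n\rangle\ge 0$ could mix summands lying inside and outside $\bbL^n\cup(-\bbL^n)$, so one cannot directly conclude that each $\bm{a}_i$ (up to sign) lies in $\bbL^n$. Lemma~\ref{lem:rank-1_decomposition} resolves this by equalizing the quadratic form values across the summands, which is the crucial structural input. Once that lemma is invoked, the rest of the argument is a routine sign adjustment using $\bm{a}_i^{\otimes 2}=(-\bm{a}_i)^{\otimes 2}$.
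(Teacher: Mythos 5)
Your proposal is correct and follows essentially the same route as the paper: the ``$\subseteq$'' inclusions are obtained by applying Lemma~\ref{lem:rank-1_decomposition} with $\bm{Q}=\bm{J}_n$ to equalize the values $\bm{a}_i^\top\bm{J}_n\bm{a}_i$ across the rank-one summands, followed by the sign flip $\bm{a}_i\mapsto-\bm{a}_i$ (which the subspace $\bbX$ permits and which leaves $\bm{a}_i^{\otimes 2}$ unchanged), and the ``$\supseteq$'' inclusions are the direct verification the paper dismisses as straightforward. Your explicit remark on why the equalization step is needed—that an arbitrary decomposition could mix summands inside and outside $\bbL^n\cup(-\bbL^n)$—is a correct identification of the crux.
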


\begin{proof}
For $\bm{A} \in \bbS_+(\bbX)$, let $r$ be the rank of $\bm{A}$.
Then Lemma~\ref{lem:rank-1_decomposition} implies that there exist $\bm{a}^{(1)},\dots,\bm{a}^{(r)} \in \bbX$ such that $\bm{A} = \sum_{i=1}^r(\bm{a}^{(i)})^{\otimes 2}$ and $(\bm{a}^{(i)})^\top\bm{J}_n\bm{a}^{(i)} = \frac{\langle \bm{A},\bm{J}_n\rangle}{r}$ for all $i = 1,\dots,r$.
For each $i = 1,\dots,r$, by changing the sign of $a_1^{(i)}$ if necessary, we may assume that $a_1^{(i)} \ge 0$.
If $\langle\bm{A},\bm{J}_n\rangle \ge 0$, since $(a_1^{(i)})^2 - \lVert\bm{a}_{2:n}^{(i)}\rVert^2 \ge 0$, we have $\bm{a}^{(i)} \in \bbX \cap \bbL^n$.
Similarly, if $\langle\bm{A},\bm{J}_n\rangle = 0$, we have $\bm{a}^{(i)} \in \bbX \cap \partial\bbL^n$.
Therefore, we obtain the inclusions ``$\subseteq$'' in \eqref{enum:face_CP_ge} and \eqref{enum:face_CP_eq}.
The reverse inclusions are straightforward.
\end{proof}

To compute the dimensions of the faces in the subsequent subsections, we use the following lemma.

\begin{lemma}[{\cite[Lemma~\mbox{6.2}]{Dickinson2011}}]\label{lem:linearly_independent_matrices}
Assume that $\bm{x}_1,\dots,\bm{x}_m \in \bbR^n$ are linearly independent.
Then $(\bm{x}_i + \bm{x}_j)^{\otimes 2}$ $(1\le i\le j\le m)$ are $T_m$ linearly independent matrices.
\end{lemma}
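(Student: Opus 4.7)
The plan is to show that any hypothetical vanishing combination
\begin{equation*}
\bm{M} \coloneqq \sum_{1 \le i \le j \le m} c_{ij}(\bm{x}_i + \bm{x}_j)^{\otimes 2} = \bm{O}
\end{equation*}
forces every coefficient $c_{ij}$ to vanish; since the collection has exactly $T_m = m(m+1)/2$ members, this will give linear independence. The main idea is to use duality: since $\bm{x}_1, \ldots, \bm{x}_m$ are linearly independent in $\bbR^n$, I would pick a dual system $\bm{y}_1, \ldots, \bm{y}_m \in \bbR^n$ satisfying $\bm{y}_k^\top \bm{x}_l = \delta_{kl}$, and then probe $\bm{M}$ by the scalars $\bm{y}_k^\top \bm{M} \bm{y}_l$. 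Expanding the outer products yields
\begin{equation*}
0 = \bm{y}_k^\top \bm{M} \bm{y}_l = \sum_{1 \le i \le j \le m} c_{ij} (\delta_{ki} + \delta_{kj})(\delta_{li} + \delta_{lj})
\end{equation*}
for every choice of indices $(k, l) \in \{1, \ldots, m\}^2$.

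First I would specialize to $k < l$. The factor $(\delta_{ki} + \delta_{kj})(\delta_{li} + \delta_{lj})$ is nonzero precisely when $\{k, l\} \subseteq \{i, j\}$, and the constraint $i \le j$ combined with $k \ne l$ then isolates the single pair $(i, j) = (k, l)$; hence the identity collapses to $c_{kl} = 0$ for every $k < l$. Next, setting $k = l$ and using the off-diagonal vanishing just proved, the sum reduces to a diagonal term of value $4 c_{kk}$ plus cross terms $c_{kj}$ for $j > k$ and $c_{ik}$ for $i < k$, each of which is already zero, so $c_{kk} = 0$ for every $k$. This will complete the argument.

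I do not anticipate any substantive obstacle: once duality has been applied, the system of scalar equations decouples cleanly into an off-diagonal block and a diagonal block, and each block directly forces the corresponding coefficients to vanish. The only slightly delicate point is the combinatorial bookkeeping verifying that no stray pair $(i, j)$ contributes in the $k \ne l$ case, which follows immediately from the convention $i \le j$.
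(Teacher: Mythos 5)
Your proof is correct. The duality argument is sound: a dual system $\bm{y}_1,\dots,\bm{y}_m$ with $\bm{y}_k^\top\bm{x}_l = \delta_{kl}$ exists because the Gram matrix of a linearly independent family is invertible, the probe $\bm{y}_k^\top\bm{M}\bm{y}_l$ with $k<l$ isolates exactly the pair $(i,j)=(k,l)$ and yields $c_{kl}=0$, and the diagonal probe then gives $4c_{kk}=0$. Note that the paper itself offers no proof of this lemma; it is quoted verbatim as Lemma~6.2 of Dickinson's paper, so there is no in-paper argument to compare against. Your argument is a clean, self-contained justification that could stand in for the citation.
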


\subsection{Dimensions of faces of the form $\CP(\bbX \cap \bbL^n)$}\label{subsec:face_CP_ge}
In this subsection, we compute the dimension of the face $\CP(\bbX \cap \bbL^n)$ for each subspace $\bbX$ of $\bbR^n$.
The dimension of $\CP(\bbX \cap \bbL^n)$ depends on how the subspace $\bbX$ intersects with the second-order cone $\bbL^n$.
We first consider the case where $\bbX \cap \setint\bbL^n \neq \emptyset$.

\begin{proposition}\label{prop:w_intLn_cap_W_nonempty}
Suppose that a $d$-dimensional subspace $\bbX$ of $\bbR^n$ satisfies $\bbX \cap \setint\bbL^n \neq \emptyset$.
Then we have $\dim\CP(\bbX \cap \bbL^n) = T_d$.
\end{proposition}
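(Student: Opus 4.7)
The plan is to establish $\dim\CP(\bbX \cap \bbL^n) = T_d$ by showing both inequalities, the upper bound being immediate and the lower bound following from an explicit construction of $T_d$ linearly independent rank-one matrices in $\CP(\bbX \cap \bbL^n)$.

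First I would handle the upper bound. Since $\bbX \cap \bbL^n \subseteq \bbX$, we have $\CP(\bbX \cap \bbL^n) \subseteq \CP(\bbX) = \bbS_+(\bbX)$, and Lemma~\ref{lem:face_PSD} gives $\dim \bbS_+(\bbX) = T_d$. So $\dim\CP(\bbX \cap \bbL^n) \le T_d$.

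For the lower bound, I would invoke Lemma~\ref{lem:basis_in_K} (using the hypothesis $\bbX \cap \setint\bbL^n \neq \emptyset$) to obtain a basis $\bm{x}_1,\dots,\bm{x}_d$ of $\bbX$ consisting of vectors in $\bbX \cap \setint\bbL^n$. Since $\setint\bbL^n$ is a convex cone, $\bm{x}_i + \bm{x}_j \in \bbX \cap \setint\bbL^n \subseteq \bbX \cap \bbL^n$ for every $1 \le i \le j \le d$. Then the $T_d$ matrices $(\bm{x}_i + \bm{x}_j)^{\otimes 2}$ all lie in $\CP(\bbX \cap \bbL^n)$, and Lemma~\ref{lem:linearly_independent_matrices} guarantees that they are linearly independent. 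Hence $\dim \CP(\bbX \cap \bbL^n) \ge T_d$.

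Combining the two bounds yields the conclusion. No step here looks like a real obstacle: the key observation is simply that an interior basis is available by Lemma~\ref{lem:basis_in_K}, which then feeds directly into the linear independence statement of Lemma~\ref{lem:linearly_independent_matrices}. The only point requiring a quick justification is that sums of basis vectors remain inside $\bbL^n$, which follows from convexity of the interior.
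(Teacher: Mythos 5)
Your proposal is correct and follows essentially the same route as the paper's proof: both obtain an interior basis via Lemma~\ref{lem:basis_in_K}, apply Lemma~\ref{lem:linearly_independent_matrices} to the matrices $(\bm{x}_i+\bm{x}_j)^{\otimes 2}$ for the lower bound, and use the inclusion $\CP(\bbX \cap \bbL^n) \subseteq \bbS_+(\bbX)$ for the upper bound. Your extra remark justifying $\bm{x}_i+\bm{x}_j \in \bbL^n$ by convexity of the interior is a minor point the paper leaves implicit.
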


\begin{proof}
By assumption and Lemma~\ref{lem:basis_in_K}, there exist $\bm{x}_1,\dots,\bm{x}_d \in \bbX \cap \setint\bbL^n$ such that they compose a basis for $\bbX$.
Then it follows from Lemma~\ref{lem:linearly_independent_matrices} that $(\bm{x}_i+\bm{x}_j)^{\otimes 2}$ ($1\le i\le j\le d$) are linearly independent.
Since each $(\bm{x}_i+\bm{x}_j)^{\otimes 2}$ belongs to $\CP(\bbX \cap \bbL^n)$, it follows that $\dim\CP(\bbX \cap \bbL^n) \ge T_d$.
In addition, since $\CP(\bbX \cap \bbL^n)$ is included in $\bbS_+(\bbX)$ and $\dim\bbS_+(\bbX) = T_d$, it follows that $\dim\CP(\bbX \cap \bbL^n) \le T_d$.
Therefore, we have the desired result.
\end{proof}

If the subspace $\bbX$ intersects with the second-order cone $\bbL^n$ only at the origin, we see that $\CP(\bbX \cap \bbL^n) = \{\bm{O}\}$ and $\dim\CP(\bbX \cap \bbL^n) = 0$.
In what follows, we consider the case where the subspace $\bbX$ satisfies neither $\bbX \cap \setint\bbL^n \neq \emptyset$ nor $\bbX \cap \bbL^n = \{\bm{0}\}$.

\begin{proposition}\label{prop:Ln_cap_W_ray}
Suppose that a subspace $\bbX \subseteq \bbR^n$ satisfies $\bbX \cap \setint\bbL^n = \emptyset$ and $\{\bm{0}\} \subsetneq \bbX \cap \bbL^n$.
Then there exists $\bm{x} \in \partial\bbL^n \setminus \{\bm{0}\}$ such that $\bbX \cap \bbL^n = \bbR_+\bm{x}$.
Consequently, it follows that $\CP(\bbX \cap \bbL^n) = \bbR_+\bm{x}^{\otimes 2}$ and $\dim\CP(\bbX \cap \bbL^n) = 1$.
\end{proposition}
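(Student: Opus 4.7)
The plan is as follows. Since $\bbX \cap \setint\bbL^n = \emptyset$ but $\bbX \cap \bbL^n$ contains nonzero points, every nonzero element of $\bbX \cap \bbL^n$ must lie on $\partial\bbL^n$; that is, $\bbX \cap \bbL^n \subseteq \partial\bbL^n \cup \{\bm{0}\}$. The key claim is that all nonzero elements of $\bbX \cap \bbL^n$ are in fact positive scalar multiples of one another, so the set forms a single ray.

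First I would take two arbitrary nonzero elements $\bm{x},\bm{y} \in \bbX \cap \bbL^n$ and use convexity to note that $\bm{x}+\bm{y} \in \bbX \cap \bbL^n \subseteq \partial\bbL^n$. By the description of $\partial\bbL^n$ in \eqref{eq:bdry_Ln}, I have $x_1 = \lVert\bm{x}_{2:n}\rVert$, $y_1 = \lVert\bm{y}_{2:n}\rVert$, and $(x_1+y_1)^2 = \lVert\bm{x}_{2:n}+\bm{y}_{2:n}\rVert^2$. Expanding both sides and cancelling yields
\begin{equation*}
\bm{x}_{2:n}^\top \bm{y}_{2:n} = \lVert\bm{x}_{2:n}\rVert \cdot \lVert\bm{y}_{2:n}\rVert.
\end{equation*}
Note that $\bm{x}_{2:n}$ and $\bm{y}_{2:n}$ are both nonzero (otherwise $x_1=0$ or $y_1=0$, forcing $\bm{x}=\bm{0}$ or $\bm{y}=\bm{0}$). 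The equality case of the Cauchy--Schwarz inequality then gives a scalar $\lambda > 0$ with $\bm{y}_{2:n} = \lambda \bm{x}_{2:n}$, and taking norms yields $y_1 = \lambda x_1$, so $\bm{y} = \lambda \bm{x}$.

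Fixing any nonzero $\bm{x} \in \bbX \cap \bbL^n$, the above argument applied to $\bm{x}$ and an arbitrary nonzero $\bm{y}\in \bbX \cap \bbL^n$ shows $\bbX \cap \bbL^n \subseteq \bbR_+\bm{x}$; the reverse inclusion is trivial since $\bbX \cap \bbL^n$ is a convex cone containing $\bm{x}$. Thus $\bbX \cap \bbL^n = \bbR_+\bm{x}$ with $\bm{x} \in \partial\bbL^n \setminus \{\bm{0}\}$.

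For the consequence, I unfold the definition of $\CP$: every element of $\CP(\bbR_+\bm{x})$ has the form $\sum_{i=1}^k (\alpha_i \bm{x})^{\otimes 2} = \bigl(\sum_{i=1}^k \alpha_i^2\bigr)\bm{x}^{\otimes 2}$ with $\alpha_i \ge 0$, so $\CP(\bbX \cap \bbL^n) = \bbR_+\bm{x}^{\otimes 2}$. Since $\bm{x}^{\otimes 2}$ is nonzero, this is a one-dimensional cone. No step here looks difficult; the only substantive point is the use of the Cauchy--Schwarz equality case to extract positive proportionality from the boundary condition.
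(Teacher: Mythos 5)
Your proof is correct, and it takes a genuinely different route from the paper's. The paper argues abstractly: it lets $\bbF$ be the minimal face of $\bbL^n$ containing $\bbX \cap \bbL^n$, invokes the fact that $\ri\bbF$ meets $\ri(\bbX\cap\bbL^n)$ (a result of Pataki), and then rules out $\bbF = \bbL^n$ and $\bbF = \{\bm{0}\}$ using the two hypotheses, leaving only the extreme-ray case from the classification of faces of $\bbL^n$ (Lemma~\ref{lem:soc_faces}). You instead work directly with the algebraic description of $\partial\bbL^n$ in \eqref{eq:bdry_Ln}: after observing that $\bbX\cap\bbL^n \subseteq \partial\bbL^n$ (which is immediate from $\bbX\cap\setint\bbL^n=\emptyset$), you add two nonzero elements, note the sum stays on the boundary, and extract positive proportionality from the equality case of Cauchy--Schwarz. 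All the steps check out — in particular $x_1>0$ and $y_1>0$ for nonzero boundary points, so $\bm{x}+\bm{y}\neq\bm{0}$ and the cancellation yielding $\bm{x}_{2:n}^\top\bm{y}_{2:n}=\lVert\bm{x}_{2:n}\rVert\,\lVert\bm{y}_{2:n}\rVert$ is valid. What each approach buys: yours is elementary and self-contained, needing nothing beyond the definition of $\bbL^n$; the paper's is shorter given the facial-structure machinery already in place and its template (minimal face plus relative-interior argument) transfers verbatim to any pointed closed convex cone whose proper nonzero faces are all extreme rays, whereas your computation is tied to the specific quadratic description of the second-order cone. The final part on $\CP(\bbX\cap\bbL^n)=\bbR_+\bm{x}^{\otimes 2}$ matches the paper's (both treat it as immediate).
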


\begin{proof}
Let $\bbF$ be the minimal face of $\bbL^n$ containing $\bbX \cap \bbL^n$.
Then it follows from \cite[Proposition~3.2.2.\Rnum{3}]{Pataki2000} that
\begin{equation}
(\ri\bbF) \cap \ri(\bbX \cap \bbL^n) \neq \emptyset, \label{eq:minimal_face}
\end{equation}
where $\ri(\cdot)$ denotes the relative interior of an input set.
We recall the classification of the faces of $\bbL^n$ shown in Lemma~\ref{lem:soc_faces}.
If $\bbF=\bbL^n$, we see from \eqref{eq:minimal_face} that $\bbX \cap \bbL^n$ contains an element of $\setint\bbL^n$, which contradicts the assumption.
In addition, the case of $\bbF = \{\bm{0}\}$ implies that $\bbX \cap \bbL^n = \{\bm{0}\}$, which also contradicts the assumption.
Therefore, $\bbF$ is an extreme ray of $\bbL^n$, so there exists $\bm{x} \in \partial\bbL^n \setminus \{\bm{0}\}$ such that $\bbF = \bbR_+\bm{x}$.
Since $\ri\bbF = \{\alpha \bm{x} \mid \alpha > 0\}$ and $\bbX \cap \bbL^n$ is a closed cone, it follows from \eqref{eq:minimal_face} that $\bbR_+\bm{x} \subseteq \bbX \cap \bbL^n$.
Since $\bbF = \bbR_+\bm{x}$ and it includes $\bbX \cap \bbL^n$, we obtain $\bbX \cap \bbL^n = \bbR_+\bm{x}$.

Finally, $\CP(\bbX \cap \bbL^n) = \bbR_+\bm{x}^{\otimes 2}$ and $\dim\CP(\bbX \cap \bbL^n) = 1$ immediately follow from $\bbX \cap \bbL^n = \bbR_+\bm{x}$.
\end{proof}

\subsection{Dimensions of faces of the form $\CP(\bbX \cap \partial\bbL^n)$}\label{subsec:face_CP_eq}
In this subsection, we compute the dimension of the face $\CP(\bbX \cap \partial\bbL^n)$ for each subspace $\bbX$ of $\bbR^n$.
As in Section~\ref{subsec:face_CP_ge}, how the subspace $\bbX$ intersects with the second-order cone $\bbL^n$ determines the dimension of $\CP(\bbX \cap \partial\bbL^n)$.
We first consider the case where $\bbX \cap \setint\bbL^n \neq \emptyset$.

\begin{proposition}\label{prop:Jn_intLn_cap_W_nonempty}
Suppose that a $d$-dimensional subspace $\bbX$ of $\bbR^n$ satisfies $\bbX \cap \setint\bbL^n \neq \emptyset$.
Then we have $\dim\CP(\bbX \cap \partial\bbL^n) = T_d -1$.
\end{proposition}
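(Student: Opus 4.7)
By item \eqref{enum:face_CP_eq} of the preceding proposition, $\CP(\bbX \cap \partial\bbL^n) = \bbS_+(\bbX) \cap H$, where $H \coloneqq \{\bm{A}\in \bbS^n \mid \langle\bm{A},\bm{J}_n\rangle = 0\}$. My plan is to show $\dim(\bbS_+(\bbX) \cap H) = T_d-1$ by bounding above via a codimension count and bounding below by exhibiting a full-rank element of the face.

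For the upper bound, fix $\bm{x}_0 \in \bbX \cap \setint\bbL^n$, which exists by hypothesis. Since $\bm{x}_0^{\otimes 2} \in \bbS_+(\bbX) \subseteq \setspan\bbS_+(\bbX)$ and $\langle \bm{x}_0^{\otimes 2}, \bm{J}_n\rangle = \bm{x}_0^\top \bm{J}_n \bm{x}_0 > 0$, the hyperplane $H$ does not contain $\setspan\bbS_+(\bbX)$. Combined with Lemma~\ref{lem:face_PSD}, which gives $\dim\setspan\bbS_+(\bbX) = T_d$, this yields $\dim(\setspan\bbS_+(\bbX)\cap H) = T_d-1$, and hence $\dim\CP(\bbX \cap \partial\bbL^n) \le T_d-1$.

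For the matching lower bound, the case $d = 1$ is immediate: then $\bbX = \bbR\bm{x}_0$, so $\bbX \cap \partial\bbL^n = \{\bm{0}\}$ and $\CP(\bbX \cap \partial\bbL^n) = \{\bm{O}\}$ has dimension $0 = T_1-1$. For $d \ge 2$, I would invoke Lemma~\ref{lem:basis_in_partialK}, applied to the pointed closed convex cone $\bbL^n$, to extract a basis $\bm{x}_1,\dots,\bm{x}_d$ of $\bbX$ with every $\bm{x}_i \in \bbX \cap \partial\bbL^n$, and then set $\bm{A}_0 \coloneqq \sum_{i=1}^d \bm{x}_i^{\otimes 2} \in \CP(\bbX \cap \partial\bbL^n)$. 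Writing $\bm{A}_0 = \bm{X}\bm{X}^\top$ where $\bm{X} \coloneqq [\bm{x}_1 \mid \cdots \mid \bm{x}_d]$ has rank $d$ (its columns being linearly independent), one obtains $\rank\bm{A}_0 = d$; Lemma~\ref{lem:rank_PtAP} then gives that $\phi(\bm{A}_0) \in \bbS^d$ also has rank $d$, i.e., is positive definite. Therefore $\bm{A}_0$ belongs to the relative interior of $\bbS_+(\bbX)$ within $\setspan\bbS_+(\bbX)$.

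Consequently, some open neighborhood $U$ of $\bm{A}_0$ in $\setspan\bbS_+(\bbX)$ is contained in $\bbS_+(\bbX)$. Because $\bm{A}_0 \in H$ (each $\bm{x}_i$ being on $\partial\bbL^n$ gives $\bm{x}_i^\top\bm{J}_n\bm{x}_i = 0$), the intersection $U \cap H$ is a nonempty open neighborhood of $\bm{A}_0$ in the $(T_d-1)$-dimensional subspace $\setspan\bbS_+(\bbX)\cap H$, and it lies in $\bbS_+(\bbX)\cap H = \CP(\bbX \cap \partial\bbL^n)$. Translation about $\bm{A}_0$ then shows $\setspan\CP(\bbX \cap \partial\bbL^n) = \setspan\bbS_+(\bbX)\cap H$, delivering $\dim\CP(\bbX \cap \partial\bbL^n) \ge T_d-1$ and completing the proof. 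The main subtlety is producing a full-rank element of the face, which is precisely what Lemma~\ref{lem:basis_in_partialK} supplies when $d \ge 2$, making the case split on $d$ unavoidable.
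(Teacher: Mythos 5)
Your proof is correct, but it takes a genuinely different route from the paper's on both bounds. For the upper bound, the paper observes that $\CP(\bbX\cap\partial\bbL^n)$ is a \emph{proper} face of $\CP(\bbX\cap\bbL^n)$ (witnessed by $\bm{x}_0^{\otimes 2}$ with $\bm{x}_0\in\bbX\cap\setint\bbL^n$) and invokes Proposition~\ref{prop:w_intLn_cap_W_nonempty} together with the fact that proper faces have strictly smaller dimension; you instead do a direct codimension count, noting that the hyperplane $\{\langle\cdot,\bm{J}_n\rangle=0\}$ does not contain $\setspan\bbS_+(\bbX)$. For the lower bound, the paper uses the homogeneity of $\bbL^n$ to move $\bm{x}_0$ to $(1;\bm{0})$ via an automorphism $\bm{G}$ and then writes down $T_d-1$ explicit matrices in $\CP(\bbX\cap\partial\bbL^n)$, verifying their linear independence by hand via Lemma~\ref{lem:linearly_independent_matrices}; you instead produce a single rank-$d$ element $\bm{A}_0$ of the face using Lemma~\ref{lem:basis_in_partialK}, place it in $\ri\bbS_+(\bbX)$ via Lemma~\ref{lem:rank_PtAP}, and conclude by an openness-and-span argument. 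Your argument is shorter and avoids the explicit linear-independence computation and the appeal to homogeneity, at the cost of the case split at $d=1$ (forced by the hypothesis $\dim\bbX\ge 2$ in Lemma~\ref{lem:basis_in_partialK}, and handled correctly since $\bbX\cap\partial\bbL^n=\{\bm{0}\}$ there), whereas the paper's construction degenerates gracefully to the empty family when $d=1$ and has the side benefit of exhibiting explicit spanning generators of the face.
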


\begin{proof}
By assumption, we can take $\bm{x} \in \bbX \cap \setint\bbL^n$.
Then the matrix $\bm{x}^{\otimes 2}$ verifies the strict inclusion
\begin{equation*}
\bbS_+(\bbX) \cap \{\bm{A} \in \bbS^n \mid \langle\bm{A},\bm{J}_n\rangle = 0\} \subsetneq \bbS_+(\bbX) \cap \{\bm{A} \in \bbS^n \mid \langle\bm{A},\bm{J}_n\rangle \ge 0\},
\end{equation*}
i.e., $\CP(\bbX \cap \partial\bbL^n) \subsetneq \CP(\bbX \cap \bbL^n)$.
These two sets are faces of $\CP(\bbL^n)$ and it follows from Proposition~\ref{prop:w_intLn_cap_W_nonempty} that $\dim\CP(\bbX \cap \bbL^n) = T_d$.
Therefore, we see that $\dim\CP(\bbX \cap \partial\bbL^n) \le T_d -1$.
In what follows, we provide $(T_d -1)$ linearly independent matrices belonging to $\CP(\bbX \cap \partial\bbL^n)$.

We note that the second-order cone $\bbL^n$ is homogeneous~\cite[Section~\mbox{\RNum{1}.2}]{FK1994}.
Since $(1;\bm{0}) \in \setint\bbL^n$, there exists an $n\times n$ invertible matrix $\bm{G}$ such that
$\bm{G}\bm{x} = (1;\bm{0}) \in (\bm{G}\bbX )\cap \setint\bbL^n$ and $\bm{G}(\setint\bbL^n) = \setint\bbL^n$.
We note $\bm{G}$ satisfies $\bm{G}(\partial\bbL^n) = \partial\bbL^n$.
Let $\bm{v}_1,\dots,\bm{v}_{d-1} \in \bbR^{n-1}$ be such that $(1;\bm{0}),(0;\bm{v}_1),\dots,(0;\bm{v}_{d-1})$ form an orthonormal basis for $\bm{G}\bbX$.
Then, since $\lVert\bm{v}_i\rVert = 1$ for each $i=1,\dots,d-1$, we see that
\begin{align*}
(1;\pm\bm{v}_i) &\in \bm{G}\bbX \cap \partial\bbL^n\ (i=1,\dots,d-1),\\
(\sqrt{2};\pm(\bm{v}_i+\bm{v}_j)) &\in \bm{G}\bbX \cap \partial\bbL^n\ (1\le i< j\le d-1).
\end{align*}
Therefore, we have
\begin{equation}
\begin{aligned}
\bm{G}^{-1}(1;\pm\bm{v}_i) &\in \bbX \cap \partial\bbL^n\ (i = 1,\dots,d-1),\\
\bm{G}^{-1}(\sqrt{2};\pm(\bm{v}_i+\bm{v}_j)) &\in \bbX \cap \partial\bbL^n\ (1 \le i< j \le d-1).
\end{aligned}\label{eq:vector_in_partialLn_cap_W}
\end{equation}
Using these vectors, we consider the following $(T_d -1)$ matrices:
\begin{equation}
\left\{\bm{G}^{-1}\begin{pmatrix}
1\\
\pm\bm{v}_i
\end{pmatrix}\right\}^{\otimes 2} = \bm{G}^{-1}\begin{pmatrix}
1 & \pm\bm{v}_i^\top\\
\pm\bm{v}_i &  \bm{v}_i^{\otimes 2}
\end{pmatrix}\bm{G}^{-\top} \label{eq:linear_independent_mat_i}
\end{equation}
for $i = 1,\dots,d-1$ and
\begin{multline}
\left\{\bm{G}^{-1}\begin{pmatrix}
\sqrt{2}\\
\bm{v}_i+\bm{v}_j
\end{pmatrix}\right\}^{\otimes 2} + \left\{\bm{G}^{-1}\begin{pmatrix}\sqrt{2}\\-(\bm{v}_i+\bm{v}_j)
\end{pmatrix}\right\}^{\otimes 2} =\\
\bm{G}^{-1}\begin{pmatrix}
4 & \\
 & 2(\bm{v}_i+\bm{v}_j)^{\otimes 2}
\end{pmatrix}\bm{G}^{-\top} \label{eq:linear_independent_mat_ij}
\end{multline}
for $1\le i< j\le d-1$.
It follows from \eqref{eq:vector_in_partialLn_cap_W} that the matrices in \eqref{eq:linear_independent_mat_i} and \eqref{eq:linear_independent_mat_ij} belong to $\CP(\bbX \cap \partial\bbL^n)$.
To see their linear independence, we assume that
\begin{multline*}
\sum_{i=1}^{d-1}c_i^+\bm{G}^{-1}\begin{pmatrix}
1 & \bm{v}_i^\top\\
\bm{v}_i &  \bm{v}_i^{\otimes 2}
\end{pmatrix}\bm{G}^{-\top} +
\sum_{i=1}^{d-1}c_i^-\bm{G}^{-1}\begin{pmatrix}
1 & -\bm{v}_i^\top\\
-\bm{v}_i &  \bm{v}_i^{\otimes 2}
\end{pmatrix}\bm{G}^{-\top} \\
+ \sum_{1\le i< j\le d-1}c_{ij}\bm{G}^{-1}\begin{pmatrix}
4 & \\
 & 2(\bm{v}_i+\bm{v}_j)^{\otimes 2}
\end{pmatrix}\bm{G}^{-\top} = \bm{O}.
\end{multline*}
From this, we have
\begin{multline}
\sum_{i=1}^{d-1}c_i^+\begin{pmatrix}
1 & \bm{v}_i^\top\\
\bm{v}_i &  \bm{v}_i^{\otimes 2}
\end{pmatrix} +
\sum_{i=1}^{d-1}c_i^-\begin{pmatrix}
1 & -\bm{v}_i^\top\\
-\bm{v}_i &  \bm{v}_i^{\otimes 2}
\end{pmatrix} \\
+ \sum_{1\le i< j\le d-1}c_{ij}\begin{pmatrix}
4 & \\
 & 2(\bm{v}_i+\bm{v}_j)^{\otimes 2}
\end{pmatrix} = \bm{O}.\label{eq:prove_linear_independence}
\end{multline}
It follows from the lower-left block of \eqref{eq:prove_linear_independence} that $\sum_{i=1}^{d-1}(c_i^+ - c_i^-)\bm{v}_i = \bm{0}$.
Since $\bm{v}_1,\dots,\bm{v}_{d-1}$ are linearly independent, we have $c_i^+ = c_i^-$ for all $i = 1,\dots,d-1$; hereafter, this common value is denoted by $c_i$.
With that, it follows from the lower-right block of \eqref{eq:prove_linear_independence} that
\begin{equation*}
\sum_{i=1}^{d-1}c_i\bm{v}_i^{\otimes 2} + \sum_{1\le i< j\le d-1}c_{ij}(\bm{v}_i+\bm{v}_j)^{\otimes 2} = \bm{O}.
\end{equation*}
By the linear independence of $\bm{v}_1,\dots,\bm{v}_{d-1}$ and Lemma~\ref{lem:linearly_independent_matrices}, $(\bm{v}_i+\bm{v}_j)^{\otimes 2}$ ($1\le i\le j\le d-1$) are also linearly independent, so we have $c_i = 0$ for all $i = 1,\dots,d-1$ and $c_{ij} = 0$ for all $1\le i< j\le d-1$.
\end{proof}

If the subspace $\bbX$ intersects with the second-order cone $\bbL^n$ only at the origin, we see that $\CP(\bbX \cap \partial\bbL^n) = \{\bm{O}\}$ and $\dim\CP(\bbX \cap \partial\bbL^n) = 0$.
In what follows, we consider the case where the subspace $\bbX$ satisfies neither $\bbX \cap \setint\bbL^n \neq \emptyset$ nor $\bbX \cap \bbL^n = \{\bm{0}\}$.
Then, in a manner similar to the proof of Proposition~\ref{prop:Ln_cap_W_ray}, we can show the following proposition.

\begin{proposition}\label{prop:pertialLn_cap_W_ray}
Suppose that a subspace $\bbX$ of $\bbR^n$ satisfies $\bbX \cap \setint\bbL^n = \emptyset$ and $\{\bm{0}\} \subsetneq \bbX \cap \bbL^n$.
Then there exists $\bm{x} \in \partial\bbL^n \setminus \{\bm{0}\}$ such that $\bbX \cap \partial\bbL^n = \bbR_+\bm{x}$.
Consequently, it follows that $\CP(\bbX \cap \partial\bbL^n) = \bbR_+\bm{x}^{\otimes 2}$ and $\dim\CP(\bbX \cap \partial\bbL^n) = 1$.
\end{proposition}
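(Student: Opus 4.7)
The plan is to observe that the hypotheses of Proposition~\ref{prop:pertialLn_cap_W_ray} coincide verbatim with those of Proposition~\ref{prop:Ln_cap_W_ray}, so we may invoke the latter directly to obtain some $\bm{x} \in \partial\bbL^n \setminus \{\bm{0}\}$ with $\bbX \cap \bbL^n = \bbR_+\bm{x}$. From there the remaining work is purely bookkeeping: we translate this description of $\bbX \cap \bbL^n$ into a description of $\bbX \cap \partial\bbL^n$ and then compute the associated completely positive cone.

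For the intersection with the boundary, I would argue both inclusions. The inclusion $\bbR_+\bm{x} \subseteq \bbX \cap \partial\bbL^n$ follows because $\bm{x} \in \bbX$ and, by the explicit formula~\eqref{eq:bdry_Ln}, $\partial\bbL^n$ is closed under nonnegative scaling, so every $\alpha\bm{x}$ with $\alpha \ge 0$ lies in $\partial\bbL^n$. The reverse inclusion is immediate from $\partial\bbL^n \subseteq \bbL^n$ together with $\bbX \cap \bbL^n = \bbR_+\bm{x}$. Hence $\bbX \cap \partial\bbL^n = \bbR_+\bm{x}$.

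Finally, applying the definition \eqref{eq:CP} to the one-dimensional cone $\bbR_+\bm{x}$ gives $\CP(\bbR_+\bm{x}) = \{\sum_{i=1}^k(\alpha_i\bm{x})^{\otimes 2} \mid k \in \bbN,\ \alpha_i \ge 0\} = \bbR_+\bm{x}^{\otimes 2}$, and since $\bm{x} \neq \bm{0}$ this is a one-dimensional cone. There is essentially no obstacle here: the substantive geometric content (that $\bbX \cap \bbL^n$ must collapse to an extreme ray of $\bbL^n$ under the stated hypotheses, via Pataki's relative-interior characterization of the minimal face containing a convex subset) has already been discharged in the proof of Proposition~\ref{prop:Ln_cap_W_ray}, and what remains is only the observation that extreme rays of $\bbL^n$ sit inside $\partial\bbL^n$.
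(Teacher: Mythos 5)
Your proof is correct and matches the paper's intent: the paper itself only remarks that the result follows ``in a manner similar to'' the proof of Proposition~\ref{prop:Ln_cap_W_ray}, and your route of simply citing that proposition's conclusion $\bbX \cap \bbL^n = \bbR_+\bm{x}$ and then observing that $\bbR_+\bm{x} \subseteq \partial\bbL^n$ forces $\bbX \cap \partial\bbL^n = \bbX \cap \bbL^n$ is a clean (arguably cleaner) instantiation of exactly that idea. All the small verifications you list --- $\partial\bbL^n$ being closed under nonnegative scaling, $\partial\bbL^n \subseteq \bbL^n$, and $\CP(\bbR_+\bm{x}) = \bbR_+\bm{x}^{\otimes 2}$ --- are sound.
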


\subsection{The length of a longest chain of faces and the distance to polyhedrality of $\CP(\bbL^n)$}\label{subsec:length_CP}

In this subsection, we compute the length $\ell_{\CP(\bbL^n)}$ of a longest chain of faces and the distance $\ell_{\rm poly}(\CP(\bbL^n))$ to polyhedrality of $\CP(\bbL^n)$.

\begin{lemma}\label{lem:length_intersections_two_cones}
For closed convex cones $\bbK_1$ and $\bbK_2$, let $\bbK \coloneqq \bbK_1 \cap \bbK_2$.
Then we have $\ell_{\bbK} \le \ell_{\bbK_1} + \ell_{\bbK_2} - 1$.
\end{lemma}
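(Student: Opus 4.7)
The plan is to take a longest chain of faces of $\bbK$ and to associate to it two monotone sequences of faces, one in $\bbK_1$ and one in $\bbK_2$, each step of which fails to strictly decrease in at most one of the two sequences. Counting strict decreases on both sides will then give the bound.

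Concretely, let $\bbF_l \subsetneq \bbF_{l-1} \subsetneq \cdots \subsetneq \bbF_1$ be a chain of faces of $\bbK$ of length $l = \ell_{\bbK}$. For each $i \in \{1,\dots,l\}$ and each $j \in \{1,2\}$, define $\bbG_{j,i}$ to be the minimal face of $\bbK_j$ containing $\bbF_i$. The first step is to verify that $\bbF_i = \bbG_{1,i} \cap \bbG_{2,i}$: by the result cited at the start of Section~\ref{sec:CP} (Dubins~\cite{Dubins1962} or \cite[Proposition~18]{LMT2018}), we can write $\bbF_i = \bbH_1 \cap \bbH_2$ for some faces $\bbH_j$ of $\bbK_j$; by minimality, $\bbG_{j,i} \subseteq \bbH_j$, and hence
\begin{equation*}
\bbF_i \;\subseteq\; \bbG_{1,i} \cap \bbG_{2,i} \;\subseteq\; \bbH_1 \cap \bbH_2 \;=\; \bbF_i,
\end{equation*}
so equality holds throughout.

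The second step is to observe that, because $\bbG_{j,i-1}$ is a face of $\bbK_j$ containing $\bbF_{i-1} \supseteq \bbF_i$, minimality forces $\bbG_{j,i} \subseteq \bbG_{j,i-1}$ for each $j$ and each $i \in \{2,\dots,l\}$. Thus we obtain two (not necessarily strict) descending chains $\bbG_{j,l} \subseteq \cdots \subseteq \bbG_{j,1}$ of faces of $\bbK_j$. Moreover, at every index $i \in \{2,\dots,l\}$ at least one of the inclusions $\bbG_{1,i} \subseteq \bbG_{1,i-1}$ or $\bbG_{2,i} \subseteq \bbG_{2,i-1}$ must be strict, for otherwise
\begin{equation*}
\bbF_i = \bbG_{1,i} \cap \bbG_{2,i} = \bbG_{1,i-1} \cap \bbG_{2,i-1} = \bbF_{i-1},
\end{equation*}
contradicting $\bbF_i \subsetneq \bbF_{i-1}$.

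Finally, deleting repetitions from $\bbG_{j,l} \subseteq \cdots \subseteq \bbG_{j,1}$ yields a strict chain of faces of $\bbK_j$, whose length is at most $\ell_{\bbK_j}$; so the number of indices $i \in \{2,\dots,l\}$ at which $\bbG_{j,i} \subsetneq \bbG_{j,i-1}$ is at most $\ell_{\bbK_j}-1$. Summing the two counts and using that each of the $l-1$ transitions contributes at least one strict inclusion gives
\begin{equation*}
l - 1 \;\le\; (\ell_{\bbK_1} - 1) + (\ell_{\bbK_2} - 1),
\end{equation*}
i.e.\ $\ell_{\bbK} = l \le \ell_{\bbK_1} + \ell_{\bbK_2} - 1$. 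The only non-routine point is ensuring that the minimal-face construction really produces $\bbF_i = \bbG_{1,i} \cap \bbG_{2,i}$, which is where the Dubins-type characterization of faces of $\bbK_1 \cap \bbK_2$ is invoked; everything else is bookkeeping.
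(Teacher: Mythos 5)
Your proof is correct and follows essentially the same route as the paper: decompose each face of $\bbK$ as an intersection of faces of $\bbK_1$ and $\bbK_2$ via the Dubins-type result, observe that each strict step in the chain forces a strict step in at least one of the two induced chains, and count. Your use of \emph{minimal} faces to guarantee that the two induced sequences are nested is a slightly more explicit justification of a point the paper's proof states without elaboration, but the argument is otherwise identical.
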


\begin{proof}
Throughout this proof, for a finite set $I$, we use $\lvert I\rvert$ to denote its cardinality.
Let $\bbF^{(l)} \subsetneq \cdots \subsetneq \bbF^{(1)}$ be a chain of faces of $\bbK$.
For every $i$, since $\bbF^{(i)}$ is a face of $\bbK$, there exist a face $\bbF_1^{(i)}$ of $\bbK_1$ and a face $\bbF_2^{(i)}$ of $\bbK_2$ such that $\bbF^{(i)} = \bbF_1^{(i)} \cap \bbF_2^{(i)}$.
In particular, the strict inclusion $\bbF^{(i+1)} \subsetneq \bbF^{(i)}$ implies that $\bbF_1^{(i+1)}\times \bbF_2^{(i+1)} \subsetneq \bbF_1^{(i)}\times \bbF_2^{(i)}$, i.e.,
$\bbF_1^{(i+1)} \subsetneq \bbF_1^{(i)}$ or $\bbF_2^{(i+1)} \subsetneq \bbF_2^{(i)}$ holds.
We conclude that there exist a sequence $\bbF_1^{(l)} \subseteq \cdots \subseteq \bbF_1^{(1)}$ of faces of $\bbK_1$ and a sequence $\bbF_2^{(l)} \subseteq \cdots \subseteq \bbF_2^{(1)}$ of faces of $\bbK_2$ such that $\bbF^{(i)} = \bbF_1^{(i)} \cap \bbF_2^{(i)}$ and
\begin{equation}
\bbF_1^{(l)}\times \bbF_2^{(l)} \subsetneq \cdots \subsetneq \bbF_1^{(1)}\times \bbF_2^{(1)}. \label{eq:face_intersection_strict_inclusion}
\end{equation}

Now, we let $I_j \coloneqq \{i \in \{1,\dots,l-1\} \mid \bbF_j^{(i+1)} \subsetneq \bbF_j^{(i)}\}$ for $j = 1,2$.
By \eqref{eq:face_intersection_strict_inclusion}, every $i \in \{1,\dots,l-1\}$ belongs to $I_1$ or $I_2$ (it is possible that $i$ belongs to both $I_1$ and $I_2$), so we have
\begin{equation}
\lvert I_1\rvert + \lvert I_2\rvert \ge l-1. \label{eq:I1_plus_I2}
\end{equation}
We order the elements of $I_1$ as $i_1 < \dots < i_k$.
Then, since $\bbF_1^{(i_k+1)} \subsetneq \bbF_1^{(i_k)} \subsetneq \cdots \subsetneq \bbF_1^{(i_1)}$ is a chain of faces of $\bbK_1$ whose length is $\lvert I_1\rvert + 1$, it follows that
\begin{equation}
\lvert I_1\rvert + 1 \le \ell_{\bbK_1}. \label{eq:ell_K1_lower}
\end{equation}
Similarly, we see that
\begin{equation}
\lvert I_2\rvert + 1 \le \ell_{\bbK_2}. \label{eq:ell_K2_lower}
\end{equation}
By \eqref{eq:I1_plus_I2}, \eqref{eq:ell_K1_lower}, and \eqref{eq:ell_K2_lower}, we have
\begin{equation*}
l \le \lvert I_1\rvert +  \lvert I_2\rvert + 1 \le \ell_{\bbK_1} + \ell_{\bbK_2} - 1.
\end{equation*}
Since the chain $\bbF^{(l)} \subsetneq \cdots \subsetneq \bbF^{(1)}$ of faces of $\bbK$ is arbitrary, we obtain the desired result.
\end{proof}

\begin{proposition}\label{prop:lcpl}
For $n\ge 2$, we have $\ell_{\CP(\mathbb{L}^n)} = n + 2$.
\end{proposition}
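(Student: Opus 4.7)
The plan is to prove both bounds separately. For the upper bound, I will invoke Lemma~\ref{lem:length_intersections_two_cones} on the decomposition $\CP(\bbL^n) = \bbS_+^n \cap H$ from \eqref{eq:CP_Ln}, where $H \coloneqq \{\bm{A} \in \bbS^n \mid \langle\bm{A},\bm{J}_n\rangle \ge 0\}$ is a half-space cone. I will use that $\ell_{\bbS_+^n} = n + 1$ (Example~\ref{ex:length_faces}) and that $H$ has only two faces---itself and the bounding hyperplane $\{\bm{A} \mid \langle\bm{A},\bm{J}_n\rangle = 0\}$, which is a linear subspace and hence admits no proper faces---so $\ell_H = 2$. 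Lemma~\ref{lem:length_intersections_two_cones} then yields $\ell_{\CP(\bbL^n)} \le (n+1) + 2 - 1 = n + 2$.

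For the matching lower bound, I will exhibit an explicit chain of $n + 2$ faces. Fix $\bm{v}_0 \in \bbR^{n-1}$ with $\lVert\bm{v}_0\rVert = 1$, so that $\bm{x}_0 \coloneqq (1;\bm{v}_0) \in \partial\bbL^n \setminus \{\bm{0}\}$. Set $\bbY_1 \coloneqq \bbR\bm{x}_0$ and choose nested subspaces $\bbY_1 \subsetneq \bbY_2 \subsetneq \cdots \subsetneq \bbY_n = \bbR^n$ with $\dim\bbY_d = d$ and $(1;\bm{0}) \in \bbY_2$; the latter guarantees $\bbY_d \cap \setint\bbL^n \ne \emptyset$ for every $d \ge 2$. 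The chain I propose is
\begin{equation*}
\{\bm{O}\} \subsetneq \CP(\bbY_1 \cap \bbL^n) \subsetneq \CP(\bbY_2 \cap \partial\bbL^n) \subsetneq \CP(\bbY_2 \cap \bbL^n) \subsetneq \CP(\bbY_3 \cap \bbL^n) \subsetneq \cdots \subsetneq \CP(\bbY_n \cap \bbL^n),
\end{equation*}
which has $n + 2$ members. All set-theoretic inclusions follow from $\bbY_d \cap \bbL^n \subseteq \bbY_{d+1} \cap \bbL^n$ and $\bbY_d \cap \partial\bbL^n \subseteq \bbY_d \cap \bbL^n$; the key step is $\bbY_1 \cap \bbL^n = \bbR_+\bm{x}_0 \subseteq \bbY_2 \cap \partial\bbL^n$, which is where the choice of $\bm{x}_0 \in \partial\bbL^n$ pays off. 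Strictness will be verified by reading off the face dimensions from Propositions~\ref{prop:w_intLn_cap_W_nonempty}, \ref{prop:Ln_cap_W_ray}, and \ref{prop:Jn_intLn_cap_W_nonempty}: the chain exhibits the distinct dimensions $0, 1, T_2 - 1, T_2, T_3, \ldots, T_n$.

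The principal obstacle is exceeding the naive length-$(n+1)$ chain $\{\bm{O}\} \subsetneq \CP(\bbY_1 \cap \bbL^n) \subsetneq \cdots \subsetneq \CP(\bbY_n \cap \bbL^n)$. An extra face would have to be a boundary face $\CP(\bbY_d \cap \partial\bbL^n)$ slipped between $\CP(\bbY_{d-1} \cap \bbL^n)$ and $\CP(\bbY_d \cap \bbL^n)$; this requires every generator of $\bbY_{d-1} \cap \bbL^n$ to lie on $\partial\bbL^n$, which forces $d - 1 = 1$ and $\bbY_1$ to be spanned by a boundary vector. Once the chain reaches $\CP(\bbY_2 \cap \bbL^n)$---which already contains interior-generated matrices such as $\bm{e}_1^{\otimes 2}$---no further boundary insertion is possible, so exactly one extra face can be squeezed in, giving the tight value $n + 2$.
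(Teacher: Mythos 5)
Your proposal is correct and follows essentially the same route as the paper: the upper bound via Lemma~\ref{lem:length_intersections_two_cones} applied to the decomposition \eqref{eq:CP_Ln} with $\ell_{\bbS_+^n}=n+1$ and $\ell=2$ for the half-space, and the lower bound via an explicit chain that is the same as the paper's (your first nontrivial face $\CP(\bbY_1\cap\bbL^n)$ coincides with the paper's $\CP(\bbX_1\cap\partial\bbL^n)$ since $\bbY_1\cap\bbL^n=\bbY_1\cap\partial\bbL^n=\bbR_+\bm{x}_0$), with strictness read off from Propositions~\ref{prop:w_intLn_cap_W_nonempty}, \ref{prop:Ln_cap_W_ray}, and \ref{prop:Jn_intLn_cap_W_nonempty} just as in the paper; your closing heuristic paragraph is unnecessary since the lemma already supplies the matching upper bound.
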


\begin{proof}
Let $\{\bm{0}\} \subsetneq \bbX_1 \subsetneq \bbX_2 \subsetneq \cdots \subsetneq \bbX_{n-1} \subsetneq \bbR^n$ be a sequence of subspaces of $\bbR^n$ such that $\bbX_1 = \bbR\bm{x}$ for some $\bm{x}\in \partial\bbL^n \setminus \{\bm{0}\}$ and $\bbX_i \cap \setint\bbL^n \neq \emptyset$ for each $i = 2,\dots,n-1$.
Note that $\dim\bbX_i = i$ for each $i = 1,\dots,n-1$.
Then we have the following chain of faces of $\CP(\bbL^n)$ with length $n+2$:
\begin{multline}
\{\bm{O}\} \subsetneq \CP(\bbX_1 \cap \partial\bbL^n) \subsetneq \CP(\bbX_2 \cap \partial\bbL^n) \\\subsetneq \CP(\bbX_2 \cap \bbL^n) \subsetneq \cdots \subsetneq \CP(\bbX_{n-1} \cap \bbL^n) \subsetneq \CP(\bbL^n), \label{eq:cp_longest_chain}
\end{multline}
where we use Propositions~\ref{prop:Jn_intLn_cap_W_nonempty} and \ref{prop:pertialLn_cap_W_ray} to obtain the strict inclusions $\{\bm{O}\} \subsetneq \CP(\bbX_1 \cap \partial\bbL^n) \subsetneq \CP(\bbX_2 \cap \partial\bbL^n)$,
use Propositions~\ref{prop:w_intLn_cap_W_nonempty} and \ref{prop:Jn_intLn_cap_W_nonempty} to obtain the strict inclusion $\CP(\bbX_2 \cap \partial\bbL^n) \subsetneq \CP(\bbX_2 \cap \bbL^n)$,
and use Proposition~\ref{prop:w_intLn_cap_W_nonempty} to obtain the strict inclusions $\CP(\bbX_2 \cap \bbL^n) \subsetneq \cdots \subsetneq \CP(\bbX_{n-1} \cap \bbL^n) \subsetneq \CP(\bbL^n)$.
Therefore, we have $\ell_{\CP(\bbL^n)} \ge n+2$.
In addition, since the length of a longest chain of faces of $\bbS_+^n$ is $n+1$ (Example~\ref{ex:length_faces}) and that of $\{\bm{A}\in\bbS^n \mid \langle\bm{A},\bm{J}_n\rangle \ge 0\}$ is $2$,
it follows from \eqref{eq:CP_Ln} and Lemma~\ref{lem:length_intersections_two_cones} that $\ell_{\CP(\bbL^n)} \le n+2$.
Thus, we have the desired result.
\end{proof}

\begin{proposition}\label{prop:dpcpl}
For $n\ge 2$, we have $\ell_{\rm poly}(\CP(\mathbb{L}^n)) = n-1$.
\end{proposition}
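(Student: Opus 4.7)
The plan is to prove $\ell_{\rm poly}(\CP(\bbL^n))\ge n-1$ by exhibiting an explicit chain of non-polyhedral faces, and $\ell_{\rm poly}(\CP(\bbL^n))\le n-1$ by a structural analysis of any such chain. For the lower bound I would reuse the length-$(n+2)$ chain~\eqref{eq:cp_longest_chain} built in the proof of Proposition~\ref{prop:lcpl} and drop its three polyhedral bottom faces, leaving
\begin{equation*}
\CP(\bbX_2 \cap \bbL^n) \subsetneq \CP(\bbX_3 \cap \bbL^n) \subsetneq \cdots \subsetneq \CP(\bbX_{n-1} \cap \bbL^n) \subsetneq \CP(\bbL^n),
\end{equation*}
a chain of length $n-1$. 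By Proposition~\ref{prop:w_intLn_cap_W_nonempty}, each term has dimension $T_d\ge T_2=3$, and the smallest one, $\CP(\bbX_2\cap\bbL^n)$, is already non-polyhedral by Lemma~\ref{lem:exp_ext_closed_cone} since it carries the one-parameter family of extreme rays $\bbR_+\bm{a}^{\otimes 2}$, $\bm{a}\in(\bbX_2\cap\bbL^n)\setminus\{\bm{0}\}$; the non-polyhedrality of the remaining terms follows because any cone containing a non-polyhedral subface is itself non-polyhedral.

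For the upper bound, I would take an arbitrary chain $\bbF_l\subsetneq\cdots\subsetneq\bbF_1$ of non-polyhedral faces of $\CP(\bbL^n)$ and, via the representation~\eqref{eq:face_CP}, write
\begin{equation*}
\bbF_i = \bbS_+(\bbX_i) \cap \{\bm{A}\in\bbS^n \mid \langle\bm{A},\bm{J}_n\rangle \mathbin{\diamondsuit_i} 0\}, \quad \diamondsuit_i\in\{\ge,=\}, \quad d_i\coloneqq\dim\bbX_i.
\end{equation*}
Since $\dim\bbF_i\ge 3$, Propositions~\ref{prop:Ln_cap_W_ray} and~\ref{prop:pertialLn_cap_W_ray} force $\bbX_i\cap\setint\bbL^n\ne\emptyset$, so Propositions~\ref{prop:w_intLn_cap_W_nonempty} and~\ref{prop:Jn_intLn_cap_W_nonempty} give $\dim\bbF_i=T_{d_i}$ when $\diamondsuit_i$ is ${\ge}$ and $\dim\bbF_i=T_{d_i}-1$ when $\diamondsuit_i$ is ${=}$. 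These two values never coincide for $d_i\ge 2$, so the pair $(\bbX_i,\diamondsuit_i)$ is uniquely determined by $\bbF_i$, with $\bbS_+(\bbX_i)$ being the minimum face of $\bbS_+^n$ containing $\bbF_i$, and non-polyhedrality sharpens the bounds to $d_i\ge 2$ when $\diamondsuit_i={\ge}$ and $d_i\ge 3$ when $\diamondsuit_i={=}$.

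The structural facts I would then establish are: (i) $\bbF_{i+1}\subseteq\bbF_i$ implies $\bbX_{i+1}\subseteq\bbX_i$ by monotonicity of the enclosing $\bbS_+^n$-face; (ii) once some $\diamondsuit_i={=}$, every later $\diamondsuit_j={=}$ because $\bbF_j\subseteq\bbF_i\subseteq\{\langle\cdot,\bm{J}_n\rangle=0\}$, so the chain splits into an initial ``${\ge}$''-block followed by a final ``${=}$''-block; and (iii) within a common-$\diamondsuit$ block, $\bbX_{i+1}\subsetneq\bbX_i$ strictly (otherwise $\bbF_{i+1}=\bbF_i$), whereas at the single possible ``${\ge}$''-to-``${=}$'' transition step, $\bbX$ may stay fixed (corresponding to passing from $\CP(\bbX\cap\bbL^n)$ to $\CP(\bbX\cap\partial\bbL^n)$). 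Hence $\bbX_1\supseteq\cdots\supseteq\bbX_l$ contains at most one non-strict inclusion, giving $d_1-d_l\ge l-2$ in the mixed case and $\ge l-1$ otherwise; combined with $d_1\le n$ and $d_l\ge 2$ (pure ``${\ge}$''-case) or $d_l\ge 3$ (otherwise), a short case check yields $l\le n-1$ uniformly. The main obstacle is securing those structural observations—uniqueness of $(\bbX_i,\diamondsuit_i)$, monotonicity of $\bbX_i$ along the chain, and the ``at most one non-strict step'' property; once they are in hand, the final dimension counting is routine.
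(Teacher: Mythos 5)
Your proposal is correct and follows essentially the same route as the paper: the lower bound uses the same subchain of \eqref{eq:cp_longest_chain}, with the non-polyhedrality of $\CP(\bbX_2\cap\bbL^n)$ coming from the infinite family of extreme rays $\bbR_+\bm{a}^{\otimes 2}$, $\bm{a}\in(\bbX_2\cap\bbL^n)\setminus\{\bm{0}\}$ (the paper merely spells out why distinct rays of $\bbX_2\cap\bbL^n$ yield distinct extreme rays), and the upper bound splits an arbitrary chain at the last ``$\ge$''-type face and counts dimensions via the classification of Section~\ref{sec:CP}, exactly as the paper does with its index $i^*$. The only organizational difference is that you bound the chain length directly through the subspaces $\bbX_i$ (with the uniqueness/monotonicity bookkeeping, which does go through using Lemmas~\ref{lem:basis_in_K} and~\ref{lem:basis_in_partialK}), whereas the paper first gets $\ell_{\rm poly}(\CP(\bbL^n))\le n$ from Lemma~\ref{lem:bound_lpolyK_lK} and then excludes the value $n$ by the same triangular-number dimension count.
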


\begin{proof}
By Lemma~\ref{lem:bound_lpolyK_lK}, we have $\ell_{\rm poly}(\CP(\mathbb{L}^n)) \le \ell_{\CP(\bbL^n)} -2 = n$.
For the sake of obtaining a contradiction, we suppose that $\ell_{\rm poly}(\CP(\mathbb{L}^n)) = n$.
Then there exists a chain $\bbF_n \subsetneq \cdots \subsetneq \bbF_1$ of faces of $\CP(\bbL^n)$ such that $\bbF_n$ is not polyhedral.
Note that $\bbF_1 = \CP(\bbL^n)$.
Let $i^*$ be the maximum among $\{1,\dots,n\}$ such that there exists $\bm{A} \in \bbF_i$ with $\langle\bm{A},\bm{J}_n\rangle > 0$.
Then the dimensions of $\bbF_1,\dots,\bbF_{i^*}$ are in $\{T_n,\dots,T_1\}$ by the discussion in Section~\ref{subsec:face_CP_ge} and those of $\bbF_{i^*+1},\dots,\bbF_n$ are in $\{T_n-1,\dots,T_2-1,1,0\}$ by that in Section~\ref{subsec:face_CP_eq}.
Therefore, we have $\dim\bbF_{i^*} \le T_{n+1-i^*}$, $\dim\bbF_{i^*+1} \le T_{n+1-i^*}-1$, and $\dim\bbF_n \le T_{n+1-i^*-(n-i^*-1)} - 1 = 2$.
This implies that $\bbF_n$ is polyhedral, which is a contradiction.
Therefore, we see that $\ell_{\rm poly}(\CP(\mathbb{L}^n)) \le n-1$.

To prove $\ell_{\rm poly}(\CP(\mathbb{L}^n)) \ge n-1$, it is sufficient to show that the face $\CP(\bbX_2 \cap \bbL^n)$ in \eqref{eq:cp_longest_chain} is not polyhedral.
By $\dim\bbX_2 = 2$ and $\bbX_2 \cap \setint\bbL^n \neq \emptyset$, Lemma~\ref{lem:basis_in_partialK} implies that there exist $\bm{x},\bm{y} \in \bbX_2 \cap \partial\bbL^n$ such that $x_1 = y_1 = 1$ and $\{\bm{x},\bm{y}\}$ is a basis for $\bbX_2$.
Then we have that
\begin{equation}
(\bbX_2 \cap \bbL^n) \setminus \{\bm{0}\} = \{\alpha\bm{x}+\beta\bm{y} \mid \alpha,\beta \ge 0 \text{ and } (\alpha,\beta) \neq (0,0)\}. \label{eq:Ln_cap_W2}
\end{equation}
Now, we assume that $\CP(\bbX_2 \cap \bbL^n)$ is polyhedral.
Since polyhedral cones are finitely generated, there exist $\bm{Z}_1,\dots,\bm{Z}_m \in \bbS^n$ such that the extreme rays of $\CP(\bbX_2 \cap \bbL^n)$ are $\bbR_+\bm{Z}_1,\dots,\bbR_+\bm{Z}_m$.
Note that by Lemma~\ref{lem:exp_ext_closed_cone}, the generators of extreme rays of $\CP(\bbX_2 \cap \bbL^n)$ are the matrices $\bm{z}^{\otimes 2}$ with $\bm{z}\in (\bbX_2 \cap \bbL^n) \setminus \{\bm{0}\}$.
Therefore, for each $i=1,\dots,m$, there exists $\bm{z}_i \in (\bbX_2 \cap \bbL^n) \setminus \{\bm{0}\}$ such that $\bm{Z}_i =  \bm{z}_i^{\otimes 2}$.

By scaling $\bm{z}_i$ if necessary, \eqref{eq:Ln_cap_W2} implies that $\bm{z}_i$ can be written as a convex combination of $\bm{x}$ and $\bm{y}$, i.e., $\bm{z}_i = t_i\bm{x} + (1-t_i)\bm{y}$ for some $t_i\in [0,1]$.
Take $t \in [0,1] \setminus \{t_1,\dots,t_m\}$ and let $\hat{\bm{z}} \coloneqq t\bm{x} + (1-t)\bm{y}$.
Since $\hat{\bm{z}} \in (\bbX_2 \cap \bbL^n)\setminus \{\bm{0}\}$, the matrix $\hat{\bm{z}}^{\otimes 2}$ generates an extreme ray of $\CP(\bbX_2 \cap \bbL^n)$.
Therefore, there exist $i \in \{ 1,\dots,m\}$ and $c > 0$ such that $\hat{\bm{z}}^{\otimes 2} = c\bm{z}_i^{\otimes 2}$, i.e.,
\begin{equation*}
\begin{pmatrix}
1 & \hat{\bm{z}}_{2:n}^\top\\
\hat{\bm{z}}_{2:n} & \hat{\bm{z}}_{2:n}^{\otimes 2}
\end{pmatrix} = c\begin{pmatrix}
1 & (\bm{z}_i)_{2:n}^\top\\
(\bm{z}_i)_{2:n} & (\bm{z}_i)_{2:n}^{\otimes 2}
\end{pmatrix}.
\end{equation*}
This implies that $\hat{\bm{z}}_{2:n} = (\bm{z}_i)_{2:n}$.
Combining this with $x_1 = y_1 = 1$ yields $\hat{\bm{z}} = \bm{z}_i$, i.e., $t_i\bm{x} + (1-t_i)\bm{y} = t\bm{x} + (1-t)\bm{y}$.
Since $\bm{x}$ and $\bm{y}$ are linearly independent, we obtain $t = t_i$, which is a contradiction.
Hence, $\CP(\bbX_2 \cap \bbL^n)$ is not polyhedral.
\end{proof}

\section{Extensions to general copositive and completely positive cones}\label{sec:implication}
In this section, using the results shown in the previous sections,
we discuss the possibility of extending the results shown in the existing studies~\cite{Dickinson2011,NL2025} and this paper to general cones.
We examine some plausible conjectures and counter-examples.

Dickinson showed that the matrix $\bm{e}_i\bm{e}_j^\top +\bm{e}_j\bm{e}_i^\top$ generates an exposed ray of $\COP(\bbR_+^n)$ if $i \neq j$~\cite[Theorem~\mbox{4.6.\Rnum{1}}]{Dickinson2011} and a non-exposed extreme ray if $i = j$~\cite[Theorem~\mbox{4.4}]{Dickinson2011}.
For the case where the underlying cone is symmetric, we previously showed that an analogous result holds for the case $i = j$~\cite[Theorem~\mbox{3.3}]{NL2025}.
Specifically, we showed that for any $\bm{c}$ generating an extreme ray of a symmetric cone $\bbK$ of dimension greater than or equal to $2$ in the Euclidean space $\bbR^n$, the matrix $\bm{c}^{\otimes 2}$ generates a non-exposed extreme ray of $\COP(\bbK)$.
Based on these results, it is natural to conjecture whether for any $\bm{c}$ and $\bm{d}$ that generate extreme rays of a symmetric cone $\bbK$ (or a general closed convex cone) in $\bbR^n$ and are orthogonal to each other, the matrix $\bm{c}\bm{d}^\top + \bm{d}\bm{c}^\top$ generates an exposed ray of $\COP(\bbK)$.
However, this conjecture is not true.
Actually, as shown in the following example, the matrix $\bm{c}\bm{d}^\top + \bm{d}\bm{c}^\top$ does not necessarily generate an extreme ray of $\COP(\bbK)$.

\begin{example}\label{ex:not_face_cd_dc}
For any $\bm{v}\in\mathbb{R}^{n-1}$ with $\|\bm{v}\| = 1$, by Lemma~\ref{lem:soc_faces}, the two vectors $(1;\bm{v})$ and $(1;-\bm{v})$ generate extreme rays of the second-order cone $\bbL^n$ and are orthogonal to each other.
Using these vectors, we construct the following matrix:
\begin{equation}
(1;\bm{v})(1;-\bm{v})^\top + (1;-\bm{v})(1;\bm{v})^\top = 2\begin{pmatrix}
1 & \\
 & -\bm{v}^{\otimes 2}
\end{pmatrix}. \label{eq:counter_example_ray}
\end{equation}
By Proposition~\ref{prop:COP_ext}, a matrix that is not positive semidefinite and generates an extreme ray of $\COP(\mathbb{L}^n)$ must be a positive multiple of $\bm{J}_n$.
The rank of $\bm{J}_n$ is $n$, whereas that of the matrix in \eqref{eq:counter_example_ray} is $2$, which is a contradiction if $n \ge 3$.
Thus, the ray generated by the matrix in \eqref{eq:counter_example_ray} is not a face when $n \ge 3$.
\end{example}

Moving on, we showed in Proposition~\ref{prop:COP_ext} that $\bm{x}^{\otimes 2}$ generates an exposed ray of $\COP(\bbL^n)$ for every $\bm{x} \in \bbR^n \setminus (\bbL^n \cup (-\bbL^n))$.
Previously, Dickinson showed in \cite[Theorem~\mbox{4.6.\Rnum{2}}]{Dickinson2011} that $\bm{x}^{\otimes 2}$ generates an exposed ray of $\COP(\bbR_+^n)$ for every $\bm{x} \in \bbR^n \setminus (\bbR_+^n \cup (-\bbR_+^n))$.
Inspired by these facts, we obtain the following result.

\begin{proposition}\label{prop:x_neq_Kd_cup_minusKd}
Let $\bbK \subseteq \bbR^n$ be a closed convex cone with nonempty interior and let $\bm{x}\in \bbR^n \setminus (\bbK^* \cup (-\bbK^*))$.
Then the set $\bbR_+\bm{x}^{\otimes 2}$ is an exposed ray of $\COP(\bbK)$.
\end{proposition}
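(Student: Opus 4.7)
The plan is to exhibit an explicit element $\bm{H} \in \CP(\bbK)$ whose orthogonal complement inside $\COP(\bbK)$ is exactly $\bbR_+ \bm{x}^{\otimes 2}$, so that the ray is exposed by definition. The natural candidate, in view of Lemma~\ref{lem:HV}\eqref{enum:AinCOP_perp}, is $\bm{H} = \bm{H}(\calV)$ for a basis $\calV$ of the hyperplane $\{\bm{x}\}^\perp$ consisting of vectors in $\setint\bbK$. If such a $\calV$ exists, then $\calV^\perp = \bbR\bm{x}$ and Lemma~\ref{lem:HV}\eqref{enum:AinCOP_perp} yields
\begin{equation*}
\COP(\bbK) \cap \{\bm{H}(\calV)\}^\perp = \bbS_+(\bbR\bm{x}) = \bbR_+\bm{x}^{\otimes 2},
\end{equation*}
while Lemma~\ref{lem:HV}\eqref{enum:MV_cp} guarantees $\bm{H}(\calV) \in \CP(\bbK)$. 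This immediately gives the conclusion (note $\bm{x}^{\otimes 2} \in \bbS_+^n \subseteq \COP(\bbK)$, so the ray is nonempty).

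The main obstacle, and really the heart of the argument, is to show that $\{\bm{x}\}^\perp \cap \setint\bbK \neq \emptyset$; once this is in hand, Lemma~\ref{lem:basis_in_K} (applied to $\bbX = \{\bm{x}\}^\perp$) produces the desired basis $\calV$. First observe $\bm{x} \neq \bm{0}$, since $\bm{0} \in \bbK^*$. The hypothesis $\bm{x} \notin \bbK^*$ gives some $\bm{y}_- \in \bbK$ with $\bm{x}^\top\bm{y}_- < 0$, and $\bm{x} \notin -\bbK^*$ gives some $\bm{y}_+ \in \bbK$ with $\bm{x}^\top\bm{y}_+ > 0$. Pick any $\bm{z} \in \setint\bbK$ (possible by assumption). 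If $\bm{x}^\top\bm{z} = 0$ we are done; otherwise, say $\bm{x}^\top\bm{z} > 0$ (the other case is symmetric), and consider the segment $\bm{z}_t = (1-t)\bm{z} + t\bm{y}_-$ for $t \in [0,1]$. The continuous function $t \mapsto \bm{x}^\top\bm{z}_t$ is positive at $t=0$ and negative at $t=1$, so some $t_0 \in (0,1)$ makes $\bm{x}^\top\bm{z}_{t_0} = 0$. By the line segment principle for convex sets, $\bm{z}_{t_0} \in \setint\bbK$ since $\bm{z} \in \setint\bbK$, $\bm{y}_- \in \bbK$, and $t_0 < 1$. Hence $\bm{z}_{t_0} \in \{\bm{x}\}^\perp \cap \setint\bbK$.

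Having established this nonemptiness, Lemma~\ref{lem:basis_in_K} supplies a basis $\calV$ of $\{\bm{x}\}^\perp$ contained in $\setint\bbK$. Then $\calV \subseteq \bbK$ and $\calV \cap \setint\bbK \neq \emptyset$, and $\calV^\perp = \bbR\bm{x}$. Applying Lemma~\ref{lem:HV}\eqref{enum:AinCOP_perp} yields $\COP(\bbK) \cap \{\bm{H}(\calV)\}^\perp = \bbS_+(\bbR\bm{x}) = \bbR_+\bm{x}^{\otimes 2}$, and since $\bm{H}(\calV) \in \CP(\bbK) = \COP(\bbK)^*$ by Lemma~\ref{lem:HV}\eqref{enum:MV_cp}, this exhibits $\bbR_+\bm{x}^{\otimes 2}$ as an exposed face, hence an exposed ray, of $\COP(\bbK)$.
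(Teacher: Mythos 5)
Your proposal is correct and follows essentially the same route as the paper: establish $\{\bm{x}\}^\perp \cap \setint\bbK \neq \emptyset$, then invoke Lemma~\ref{lem:basis_in_K} and Lemma~\ref{lem:HV}\eqref{enum:AinCOP_perp} to expose the ray by $\bm{H}(\calV)$. The only cosmetic difference is in the nonemptiness step, where you locate the zero of $t \mapsto \bm{x}^\top\bm{z}_t$ via the intermediate value theorem and the line segment principle, whereas the paper first pushes $\bm{y}_+$ into $\setint\bbK$ and writes down the balancing convex combination explicitly; both are valid.
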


\begin{proof}
We first show that $\{\bm{x}\}^\perp \cap \setint\bbK \neq \emptyset$.
Since $\bm{x}\not\in \bbK^*$, it follows from the definition of $\bbK^*$ that there exists $\bm{y}_- \in \bbK$ satisfying $\bm{x}^\top\bm{y}_- < 0$.
Similarly, $\bm{x}\not\in -\bbK^*$ implies that there exists $\bm{y}_+ \in \bbK$ satisfying $\bm{x}^\top\bm{y}_+ > 0$.
Taking a strict convex combination of $\bm{y}_+$ and an element of $\setint\bbK$ if necessary, by the convexity of $\bbK$, we may assume that $\bm{y}_+ \in \setint\bbK$; see \cite[Theorem~\mbox{6.1}]{Rockafellar1970}.
Then
\begin{equation*}
\bm{y} \coloneqq \frac{\bm{x}^\top\bm{y}_+}{-\bm{x}^\top\bm{y}_- + \bm{x}^\top\bm{y}_+}\bm{y}_- + \frac{-\bm{x}^\top\bm{y}_-}{-\bm{x}^\top\bm{y}_- + \bm{x}^\top\bm{y}_+}\bm{y}_+
\end{equation*}
is a strict convex combination of $\bm{y}_- \in \bbK$ and $\bm{y}_+ \in \setint\bbK$, so we see that $\bm{y} \in \setint\bbK$.
In addition, we have $\bm{x}^\top\bm{y} = 0$.
Therefore, we obtain $\bm{y} \in \{\bm{x}\}^\perp \cap \setint\bbK$.

Since $\{\bm{x}\}^\perp \cap \setint\bbK \neq \emptyset$, Lemma~\ref{lem:basis_in_K} implies that there exists a basis $\calV$ for $\{\bm{x}\}^\perp$ such that $\calV \subseteq \{\bm{x}\}^\perp \cap \setint\bbK$.
Hence, it follows from \eqref{enum:AinCOP_perp} of Lemma~\ref{lem:HV} that $\COP(\bbK) \cap \{\bm{H}(\calV)\}^\perp = \bbR_+\bm{x}^{\otimes 2}$, which is an exposed ray of $\COP(\bbK)$.
\end{proof}

Next, we consider the possibility of extending parts of Theorem~\ref{thm:COP_sym_face_simplified} to general cones.
In particular, Theorem~\ref{thm:COP_sym_face_simplified} and \cite[Theorem~3.3]{NL2025} contain the fact that extreme rays of a symmetric cone $\bbK$ correspond to non-exposed extreme rays of  $\COP(\bbK)$ through the map that takes $\bm{x}$ to $\bbR_+\bm{x}^{\otimes 2}$.
With the aid of  Proposition~\ref{prop:x_neq_Kd_cup_minusKd}, we will see that this is not true in general.

\begin{example}\label{ex:counter_ex_general_K}
Let $\bbK \coloneqq \{\bm{x} \in \bbR^2 \mid x_2 \ge 0,\ x_1 + x_2 \ge 0\}$, which is a closed  convex cone with nonempty interior in $\bbR^2$.
We have
\begin{equation*}
\bbK^* \cup (-\bbK^*)=  \{y_1(0,1)^\top + y_2(1,1)^\top \mid \bm{y} \in \bbR_+^2 \cup (-\bbR_+^2)\}.
\end{equation*}
We observe that $\bm{e}_1 \in \Ext\bbK$. However, since the set $\bbK^* \cup (-\bbK^*)$ does not contain $\bm{e}_1$, it follows from Proposition~\ref{prop:x_neq_Kd_cup_minusKd} that the set $\bbR_+\bm{e}_1^{\otimes 2}$ is an exposed ray of $\COP(\bbK)$.
\end{example}

In Section~\ref{sec:CP}, we observed that $\CP(\bbX \cap \bbL^n)$ and $\CP(\bbX \cap \partial\bbL^n)$ are faces of $\CP(\bbL^n)$ for any subspace $\bbX$ of $\bbR^n$ (and vice versa).
It is a natural question whether these results also hold more generally.
In what follows, we show that the former holds but the latter does not.

\begin{proposition}\label{prop:exposed_face_CP_K_cap_X}
Let $\bbK$ be a closed cone and let $\bbX$ be a subspace of $\bbR^n$.
Then $\CP(\bbX \cap \bbK)$ is an exposed face of $\CP(\bbK)$.
If $\bbK$ is also convex and $\dim(\bbX \cap \bbK) = d$, we have $\dim\CP(\bbX \cap \bbK) = T_d$.
\end{proposition}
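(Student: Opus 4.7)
The argument mirrors that of Proposition~\ref{prop:clconv_cone_to_CP}: the strategy is to construct an exposing functional from a basis of $\bbX^\perp$. Explicitly, let $\calV$ be any basis for $\bbX^\perp$, so that $\calV^\perp = \bbX$. By \eqref{enum:MV_psd} of Lemma~\ref{lem:HV}, the matrix $\bm{H}(\calV)$ is positive semidefinite and therefore lies in $\COP(\bbK) = \CP(\bbK)^*$. Applying \eqref{enum:AinCP_perp} of the same lemma then yields
\begin{equation*}
\CP(\bbK) \cap \{\bm{H}(\calV)\}^\perp = \CP(\calV^\perp \cap \bbK) = \CP(\bbX \cap \bbK),
\end{equation*}
exhibiting $\CP(\bbX \cap \bbK)$ as an exposed face of $\CP(\bbK)$.

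For the dimension claim, assume that $\bbK$ is convex, so that $\bbX \cap \bbK$ is itself a convex cone whose linear span is $d$-dimensional. First, I would choose $d$ linearly independent vectors $\bm{x}_1,\dots,\bm{x}_d \in \bbX \cap \bbK$; these exist because any set spanning a finite-dimensional subspace contains a basis of it. Closure of $\bbX \cap \bbK$ under addition guarantees $\bm{x}_i + \bm{x}_j \in \bbX \cap \bbK$ for every $1 \le i \le j \le d$, so each $(\bm{x}_i + \bm{x}_j)^{\otimes 2}$ belongs to $\CP(\bbX \cap \bbK)$. Lemma~\ref{lem:linearly_independent_matrices} certifies that these $T_d$ matrices are linearly independent, yielding $\dim \CP(\bbX \cap \bbK) \ge T_d$. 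The matching upper bound is immediate from the inclusion $\CP(\bbX \cap \bbK) \subseteq \bbS_+(\setspan(\bbX \cap \bbK))$ combined with $\dim \bbS_+(\setspan(\bbX \cap \bbK)) = T_d$ supplied by Lemma~\ref{lem:face_PSD}.

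There is no serious obstacle here. The key observation is that the proof of Proposition~\ref{prop:clconv_cone_to_CP} only really exploits the identity $\calV^\perp = \setspan\bbF$ in conjunction with Lemma~\ref{lem:HV}\eqref{enum:AinCP_perp}; since the lemma identifies $\CP(\bbK) \cap \{\bm{H}(\calV)\}^\perp$ directly with $\CP(\calV^\perp \cap \bbK)$, the argument carries over verbatim when $\calV^\perp$ is an arbitrary subspace $\bbX$, without any appeal to $\bbX$ being a face of $\bbK$.
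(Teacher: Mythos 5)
Your proof is correct and follows essentially the same route as the paper's: expose $\CP(\bbX\cap\bbK)$ via $\bm{H}(\calV)$ for a basis $\calV$ of $\bbX^\perp$ using Lemma~\ref{lem:HV}\eqref{enum:AinCP_perp}, then bound the dimension below by the $T_d$ linearly independent matrices $(\bm{x}_i+\bm{x}_j)^{\otimes 2}$ from Lemma~\ref{lem:linearly_independent_matrices} and above by the inclusion into $\bbS_+(\setspan(\bbX\cap\bbK))$. Your explicit remark that $\bm{H}(\calV)\in\CP(\bbK)^*$, and that convexity is what makes $\bm{x}_i+\bm{x}_j$ land back in $\bbX\cap\bbK$, only makes explicit what the paper leaves implicit.
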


\begin{proof}
Let $\calV$ be a basis for $\bbX^\perp$.
Then it follows from \eqref{enum:AinCP_perp} of Lemma~\ref{lem:HV} that $\CP(\bbK) \cap \{\bm{H}(\calV)\}^\perp = \CP(\bbX \cap \bbK)$, which is an exposed face of $\CP(\bbK)$.

Next, we assume that $\bbK$ is  convex and $\dim(\bbX \cap \bbK)=d$ holds.
Since $\bbX \cap \bbK$ is a convex cone of dimension $d$, there exist $\bm{x}_1,\dots,\bm{x}_d \in \bbX \cap \bbK$ such that they are linearly independent.
Then, by Lemma~\ref{lem:linearly_independent_matrices}, $(\bm{x}_i +  \bm{x}_j)^{\otimes 2}$ ($1\le i\le j\le d$) are linearly independent.
Since each $(\bm{x}_i +  \bm{x}_j)^{\otimes 2}$ belongs to $\CP(\bbX \cap \bbK)$, it follows that $\dim\CP(\bbX \cap \bbK) \ge T_d$.
In addition, since $\CP(\bbX \cap \bbK) \subseteq \CP(\setspan(\bbX \cap \bbK)) = \bbS_+(\setspan(\bbX \cap \bbK))$ and since the latter is linearly isomorphic to $\bbS_+^d$, it follows that $\dim\CP(\bbX \cap \bbK) \le T_d$.
Therefore, we obtain $\dim\CP(\bbX \cap \bbK) = T_d$.
\end{proof}

The following example shows that $\CP(\bbX \cap \partial\bbK)$ is not necessarily a face of $\CP(\bbK)$ even if $\bbK$ is an irreducible symmetric cone.\footnote{A symmetric cone is called \emph{irreducible} if it cannot be represented as the direct sum of two nonzero symmetric cones.
The second-order cone $\bbL^n$ is irreducible if $n \ge 3$~\cite[Corollary~\mbox{\RNum{4}.1.5}]{FK1994}.}
To provide such an example, we utilize the vectorized positive semidefinite cones.
We define $\svec\colon\bbS^n \to \bbR^{T_n}$ as
\begin{equation*}
\svec(\bm{X}) \coloneqq (X_{1,1},\sqrt{2}X_{1,2},X_{2,2},\dots,\sqrt{2}X_{1,n},\dots,\sqrt{2}X_{n-1,n},X_{n,n})^\top,
\end{equation*}
that is, we are vectorizing $\bm{X}$ column by column in such a way that $\svec(\bm{X})^\top \svec(\bm{Y}) = \langle\bm{X}, \bm{Y} \rangle$ holds for $\bm{X}, \bm{Y} \in \bbS^n$.
Then $\svec(\cdot)$ is an isomorphism and, since $\bbS_+^n$ is irreducible~\cite[Chapter~\RNum{5}]{FK1994}, $\svec(\bbS_+^n)$ is also an irreducible symmetric cone.
For notational convenience, we write $\bm{X}^{\otimes 2}$ for the $T_n\times T_n$ symmetric matrix $\svec(\bm{X})^{\otimes 2}$ whose rows and columns are indexed by $ij$ with $1\le i\le j \le n$, respectively.\footnote{Note that the notation $\bm{X}^{\otimes 2}$ differs from the Kronecker product of $\bm{X}$ with itself.}
For instance, a matrix $\bm{X}\in \bbS^3$ induces the $6\times 6$ matrix
\begin{multline*}
\bm{X}^{\otimes 2} =\\
\begin{blockarray}{ccccccc}
11 & 12 & 22 & 13 & 23 & 33 & \\
\begin{block}{(cccccc)c}
X_{11}^2 & \sqrt{2}X_{11}X_{12} & X_{11}X_{22} & \sqrt{2}X_{11}X_{13} & \sqrt{2}X_{11}X_{23} & X_{11}X_{33} & 11\\
  & 2X_{12}^2 & \sqrt{2}X_{12}X_{22} & 2X_{12}X_{13} & 2X_{12}X_{23} & \sqrt{2}X_{12}X_{33} & 12\\
  &   & X_{22}^2 & \sqrt{2}X_{22}X_{13} & \sqrt{2}X_{22}X_{23} & X_{22}X_{33} & 22\\
  &   &   & 2X_{13}^2 & 2X_{13}X_{23} & \sqrt{2}X_{13}X_{33} & 13\\
  &   &   &   & 2X_{23}^2 & \sqrt{2}X_{23}X_{33} & 23\\
  &   &   &   &   & X_{33}^2 & 33\\
\end{block}
\end{blockarray},
\end{multline*}
where the strictly lower triangular portion can be omitted because of the symmetry of the matrix $\bm{X}^{\otimes 2}$.
For a matrix $\bm{X}\in \bbS^n$, the matrix $\bm{X}^{\otimes 2} \in \bbS^{T_n}$ is of rank $1$ in the sense that it is generated by the vector $\svec(\bm{X})$.
Note that this does not mean that the matrix $\bm{X}$ is of rank $1$.

\begin{example}\label{ex:not_face_CP_partialK_cap_X}
Consider the following matrix composed of three rank-$1$ matrices in $\CP(\partial\svec(\bbS_+^3))$:
\begin{align}
&\begin{pmatrix}
1 & 0 & 0\\
  & 1 & 0\\
  &   & 0
\end{pmatrix}^{\otimes 2} + \begin{pmatrix}
0 & 0 & 0\\
  & 1 & 0\\
  &   & 1
\end{pmatrix}^{\otimes 2} + \begin{pmatrix}
1 & 0 & 0\\
  & 0 & 0\\
  &   & 1
\end{pmatrix}^{\otimes 2} \nonumber\\
&\quad = \begin{blockarray}{ccccccc}
11 & 12 & 22 & 13 & 23 & 33 & \\
\begin{block}{(cccccc)c}
2 & 0 & 1 & 0 & 0 & 1 & 11\\
  & 0 & 0 & 0 & 0 & 0 & 12\\
  &   & 2 & 0 & 0 & 1 & 22\\
  &   &   & 0 & 0 & 0 & 13\\
  &   &   &   & 0 & 0 & 23\\
  &   &   &   &   & 2 & 33\\
\end{block}
\end{blockarray} \label{eq:PSD_counter_example}\\
&\quad \in \CP(\partial\svec(\bbS_+^3)). \nonumber
\end{align}
The matrix in the right-hand side of \eqref{eq:PSD_counter_example} has another representation as the sum of rank-$1$ matrices in $\CP(\svec(\bbS_+^3))$, i.e.,
\begin{equation}
\eqref{eq:PSD_counter_example} = \begin{pmatrix}
1 & 0 & 0\\
  & 1 & 0\\
  &   & 1
\end{pmatrix}^{\otimes 2} + \begin{pmatrix}
1 & 0 & 0\\
  & 0 & 0\\
  &   & 0
\end{pmatrix}^{\otimes 2} + \begin{pmatrix}
0 & 0 & 0\\
  & 1 & 0\\
  &   & 0
\end{pmatrix}^{\otimes 2} + \begin{pmatrix}
0 & 0 & 0\\
  & 0 & 0\\
  &   & 1
\end{pmatrix}^{\otimes 2}. \label{eq:PSD_counter_example_another}
\end{equation}
The matrix
\begin{equation*}
\begin{pmatrix}
1 & 0 & 0\\
  & 1 & 0\\
  &   & 1
\end{pmatrix}^{\otimes 2} =
\begin{blockarray}{ccccccc}
11 & 12 & 22 & 13 & 23 & 33 & \\
\begin{block}{(cccccc)c}
1 & 0 & 1 & 0 & 0 & 1 & 11\\
  & 0 & 0 & 0 & 0 & 0 & 12\\
  &   & 1 & 0 & 0 & 1 & 22\\
  &   &   & 0 & 0 & 0 & 13\\
  &   &   &   & 0 & 0 & 23\\
  &   &   &   &   & 1 & 33\\
\end{block}
\end{blockarray} \in \CP(\svec(\bbS_+^3)),
\end{equation*}
denoted by $\bm{A}$, appearing in the first term in \eqref{eq:PSD_counter_example_another} does not belong to $\CP(\partial\svec(\bbS_+^3))$.
To show this by contradiction, we assume that $\bm{A} \in \CP(\partial\svec(\bbS_+^3))$.
Then, since the rank of $\bm{A}$ is $1$, there exists $\bm{X} \in \partial\bbS_+^3$ such that $\bm{A} = \bm{X}^{\otimes 2}$.
First, for any $i = 1,2,3$, by $1 = A_{ii,ii} = X_{ii}^2$ and the positive semidefiniteness of $\bm{X}$, we have $X_{ii} = 1$.
Second, for any $(i,j)$ with $1 \le i < j \le 3$, since $0 = A_{ij,ij} = 2X_{ij}^2$, it follows that $X_{ij} = 0$.
Combining them, we see that $\bm{X}$ is the identity matrix, which contradicts the assumption that $\bm{X} \in \partial\bbS_+^3$.
Therefore, $\bm{A}\not\in \CP(\partial\svec(\bbS_+^3))$ holds and $\CP(\partial\svec(\bbS_+^3))$ is not a face of $\CP(\svec(\bbS_+^3))$.
\end{example}

Finally, we revisit the problem of computing the length of a longest chain of faces and the distance to polyhedrality of $\COP(\bbK)$ and $\CP(\bbK)$.
These quantities are now known for $\bbK = \bbR_+^n$ \cite{Nishijima2024} and for $\bbK = \bbL^n$ (Propositions~\ref{prop:length_COPLn}, \ref{prop:dpcopl}, \ref{prop:lcpl} and \ref{prop:dpcpl}).
The case where $\bbK$ is a general symmetric cone has proved elusive so far and the case of general $\bbK$ seems completely out of reach at the moment.
Nevertheless, if a symmetric cone $\bbK$ satisfies $\dim \bbK \geq 2$, then, for some $n \ge 2$, $\COP(\bbK)$ and $\CP(\bbK)$ have faces isomorphic to $\COP(\bbL^n)$ and $\CP(\bbL^n)$, respectively.
This can be leveraged to aid in computing (but not completely determining) these two parameters, see the discussion after Corollary~\ref{cor:COP_sym_face} in Appendix~\ref{apdx:EJA}.

\vspace{0.5cm}
\noindent
{\bf Acknowledgments}

The first author is supported by JSPS Grant-in-Aid for JSPS Fellows JP22KJ1327 and JSPS Grant-in-Aid for Research Activity Start-up JP25K23344.
The second author is supported by JSPS Grant-in-Aid for Early-Career Scientists JP23K16844 and JST ASPIRE Grant Number JPMJAP2520.

\begin{appendices}
\begin{section}{
Faces of copositive and completely positive cones over symmetric cones}\label{apdx:EJA}
This appendix presents extensions of some of the results shown in \cite{NL2025}, thereby clarifying the motivation to study the facial structure of copositive and completely positive cones over a second-order cone.
The results are presented through the lens of \emph{Euclidean Jordan algebras}, which offer a one-to-one correspondence to symmetric cones.
The terminology used here is standard and follows  \cite{NL2025,FK1994}.

In order to properly discuss the results, we extend the definitions of copositive and completely positive cones exhibited in \eqref{eq:COP} and \eqref{eq:CP} from matrices to linear mappings.
Let $(\bbE,\bullet)$ be a finite-dimensional real inner product space and $\calS(\bbE)$ be the space of self-adjoint linear transformations on $\bbE$.
For a closed cone $\bbK$ in the space $\bbE$, the copositive cone and the completely positive cone over $\bbK$ are defined as
\begin{equation}
\COP(\bbK) \coloneqq \{\calA \in \calS(\bbE) \mid  x\bullet \mathcal{A}(x) \ge 0 \text{ for all $x\in \mathbb{K}$}\} \label{eq:COP_transformation}
\end{equation}
and
\begin{equation}
\CP(\bbK)\coloneqq\left\{\sum_{i=1}^k a_i\otimes a_i \relmiddle|\text{$k$ is a positive integer and } a_i \in \bbK \text{ for all $i = 1,\dots,k$}\right\},\label{eq:CP_transformation}
\end{equation}
respectively.\footnote{Note that the tensor space $\bbE\otimes \bbE$ can be identified with the space of linear mappings on $\bbE$ through the natural isomorphism that maps every element $a\otimes b$ to the mapping $x \mapsto (b\bullet x)a$.}
Here, we use the notation $a_i \otimes a_i$ instead of $a_i^{\otimes 2}$ to emphasize that $\bbE$ is not necessarily of the form $\bbR^n$ for some $n$.
That said, if  $\bbE = \bbR^n$ holds, then the copositive cone in \eqref{eq:COP_transformation} and the completely positive cone in \eqref{eq:CP_transformation} can be identified with those defined in \eqref{eq:COP} and \eqref{eq:CP}, respectively.
As with the duality between \eqref{eq:COP} and \eqref{eq:CP}, the copositive cone in \eqref{eq:COP_transformation} and the completely positive cone in \eqref{eq:CP_transformation} are dual to each other with respect to the inner product on $\calS(\bbE)$, which is denoted by $\langle\cdot,\cdot\rangle$, induced by the inner product $\bullet$ on $\bbE$.
See \cite[page~237]{Satake1975} and \cite[page~838]{NN2024_Approximation} for the construction of the inner product on $\calS(\bbE)$.

As mentioned above, symmetric cones and Euclidean Jordan algebras have a one-to-one correspondence.
For any symmetric cone $\bbK$ in a finite-dimensional real inner product space $(\bbE,\bullet)$, there exists a bilinear product $\circ\colon\bbE\times \bbE\to \bbE$ such that $(\bbE,\circ,\bullet)$ is a Euclidean Jordan algebra and $\bbK$ is the cone of squares in $\bbE$, i.e., the cone $\bbE_+ \coloneqq \{x\circ x \mid x\in \bbE\}$~\cite[Theorem~\mbox{\RNum{3}.3.1}]{FK1994}.
Conversely, for any Euclidean Jordan algebra $(\bbE,\circ,\bullet)$, the cone $\bbE_+$ is a symmetric cone in $\bbE$~\cite[Theorem~\mbox{\RNum{3}.2.1}]{FK1994}.

In what follows, we use $(\bbE,\circ,\bullet)$, or simply $\bbE$ to denote a Euclidean Jordan algebra of rank $r$, where $\bbE$ is a finite-dimensional real vector space, $\circ\colon \bbE \times \bbE \to \bbE$ is a bilinear product, and $\bullet\colon \bbE\times \bbE \to \bbR$ is an associative inner product.
We write $e$ for the identity element with respect to the product $\circ$.
For an idempotent $c \in \bbE$, we define $\bbE(c,1) \coloneqq \{x \in \bbE \mid c\circ x =  x\}$.
The space $\bbE(c,1)$ is a Euclidean Jordan subalgebra of $\bbE$~\cite[Proposition~\mbox{\RNum{4}.1.1}]{FK1994}, so the cone $\bbE(c,1)_+$ is a symmetric cone in the space $\bbE(c,1)$.

\begin{theorem}\label{thm:COP_sym_face}
Let $c$ be an idempotent in a Euclidean Jordan algebra $\bbE$.
In addition, let $\calP_{\bbE(c,1)}\colon\bbE \to \bbE(c,1)$ be the linear mapping that projects every element in $\bbE = \bbE(c,1)^\perp \oplus \bbE(c,1)$ onto the subspace $\bbE(c,1)$ and let $\calP_{\bbE(c,1)}^*$ be its adjoint.
Then the set
\begin{equation}
\{0\} \oplus \COP(\bbE(c,1)_+) \coloneqq \{\calP_{\bbE(c,1)}^* \calA \calP_{\bbE(c,1)} \mid \calA \in \COP(\bbE(c,1)_+)\} \label{eq:COP_sym_face}
\end{equation}
is a $T_{\dim\bbE(c,1)}$-dimensional face of $\COP(\bbE_+)$.
Furthermore, the face in \eqref{eq:COP_sym_face} is exposed if and only if $c$ is either the identity element $e$ or the zero.
\end{theorem}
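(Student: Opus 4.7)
The plan is to reduce the face and dimension claims to a result already available in the paper, and then handle the exposedness dichotomy separately. First, I would recall the standard Euclidean Jordan algebra fact that $\bbE(c,1)_+$ is a face of $\bbE_+$ for every idempotent $c$, and invoke the Euclidean-Jordan-algebra version of \cite[Corollary~3.2]{NL2025}: for any face $\bbF$ of a symmetric cone, $\COP(\bbF) \cap \setspan\{f \otimes f \mid f \in \setspan \bbF\}$ is a face of the ambient copositive cone. To identify this with the set in the statement, I would verify that the embedding $\tilde\calA \mapsto \calP_{\bbE(c,1)}^* \circ \tilde\calA \circ \calP_{\bbE(c,1)}$ sends $\COP(\bbE(c,1)_+) \subseteq \calS(\bbE(c,1))$ bijectively onto $\COP(\bbE(c,1)_+) \cap \setspan\{f \otimes f \mid f \in \bbE(c,1)\}$ viewed inside $\calS(\bbE)$. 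The forward inclusion uses the Peirce-decomposition fact $\calP_{\bbE(c,1)}(\bbE_+) \subseteq \bbE(c,1)_+$ (since the projection equals the quadratic representation $P(c)$ and thus preserves $\bbE_+$), which yields $x \bullet (\calP_{\bbE(c,1)}^* \circ \tilde\calA \circ \calP_{\bbE(c,1)})(x) = \calP_{\bbE(c,1)}(x) \bullet \tilde\calA(\calP_{\bbE(c,1)}(x)) \ge 0$ for all $x \in \bbE_+$; the reverse inclusion follows by restricting operators from $\bbE$ back to $\bbE(c,1)$. Because this embedding is linear and injective, and $\COP(\bbE(c,1)_+)$ has nonempty interior in $\calS(\bbE(c,1))$, the face has the claimed dimension $T_{\dim \bbE(c,1)}$.

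The easy direction of the exposedness statement is then immediate: if $c = e$, the face coincides with $\COP(\bbE_+)$, exposed by $0 \in \CP(\bbE_+)$; if $c = 0$, the face is $\{0\}$, exposed by any element in the relative interior of $\CP(\bbE_+)$.

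The main obstacle is the converse, namely proving non-exposedness for every nontrivial idempotent $c$. I would mirror the strategy used for $\COP(\bbL^n)$ in Section~\ref{sec:COP}, in particular Lemmas~\ref{lem:not_PSD} and~\ref{lem:face_intersection_zero}: pick a boundary element $a$ of $\bbE(c,1)_+$ (for instance, a primitive idempotent contained in $\bbE(c,1)$), compute the minimal exposed face of $\COP(\bbE_+)$ containing $a \otimes a$, and show that it strictly contains $\{0\} \oplus \COP(\bbE(c,1)_+)$; since any exposed face of $\COP(\bbE_+)$ that contains $\{0\} \oplus \COP(\bbE(c,1)_+)$ must contain $a \otimes a$, this forces $\{0\} \oplus \COP(\bbE(c,1)_+)$ to be non-exposed. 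Concretely, I would construct an explicit $\calA_0 \in \COP(\bbE_+) \setminus (\{0\} \oplus \COP(\bbE(c,1)_+))$ playing the role of $\bm{J}_n$ in Section~\ref{sec:COP} --- an operator that acts as the identity on $\bbE(c,1)$ and with controlled sign on the complementary Peirce components $\bbE(c,1/2)$ and $\bbE(c,0)$, calibrated to remain in $\COP(\bbE_+)$ --- and argue that any $\calH = \sum_i h_i \otimes h_i \in \CP(\bbE_+)$ annihilating $a \otimes a$ forces each $h_i$ into $\{a\}^\perp \cap \bbE_+$, from which a Peirce-component analysis yields $\langle \calH, \calA_0 \rangle = 0$. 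The principal technical difficulty is defining $\calA_0$ cleanly in the abstract Jordan-algebra framework and verifying the copositivity analogue of Lemma~\ref{lem:not_PSD}; I would expect to leverage the fact (established later in this appendix) that every $\bbE(c,1)_+$ with $\dim \bbE(c,1) \ge 2$ contains a face linearly isomorphic to some $\bbL^m$, allowing reuse of the concrete computations already carried out in Section~\ref{sec:COP}.
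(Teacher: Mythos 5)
The face and dimension claims, and the ``easy'' direction of the exposedness dichotomy, are handled in your proposal essentially as in the paper: both rely on the cited result of \cite{NL2025} for the face property and on the full-dimensionality of $\COP(\bbE(c,1)_+)$ in $\calS(\bbE(c,1))$ for the dimension count, and your identification of the embedded cone via $\calP_{\bbE(c,1)} = P(c)$ is fine. The problem is the converse direction (non-exposedness for $c \notin \{0, e\}$), where your proposal has the right high-level shape --- exhibit a copositive operator that is orthogonal to every candidate exposing element $\calH$ yet lies outside $\{0\}\oplus\COP(\bbE(c,1)_+)$ --- but leaves the key construction unresolved. Your candidate witness $\calA_0$, an operator ``acting as the identity on $\bbE(c,1)$ with controlled sign on $\bbE(c,1/2)$ and $\bbE(c,0)$,'' is a full-rank $\bm{J}_n$-style object whose copositivity over a general symmetric cone is exactly the hard point (note, e.g., that it must be nonnegative on primitive idempotents orthogonal to $c$, which already constrains the sign on $\bbE(c,0)$), and you flag this verification as open rather than supplying it. Your fallback --- reusing the $\bbL^m$ computations via a rank-two subalgebra --- does not cover general idempotents either: for $c$ of rank $\ge 3$ the face $\{0\}\oplus\COP(\bbE(c,1)_+)$ does not sit inside any face isomorphic to $\COP(\bbL^m)$, so non-exposedness cannot be inherited from Section~\ref{sec:COP} in those cases.

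The paper's witness is much simpler and sidesteps all of this. Writing $c = \sum_{i=1}^p c_i$ with $c_i$ pairwise orthogonal primitive idempotents and choosing a nonzero $d \in \bbE(c,1)^\perp \cap \bbE_+$ (possible since $c \neq e$), one sets $\calB \coloneqq \sum_{i=1}^p (c_i \otimes d + d \otimes c_i)$. Copositivity of $\calB$ is a one-line consequence of self-duality, $x \bullet \calB(x) = 2\sum_i (c_i \bullet x)(d \bullet x) \ge 0$ for $x \in \bbE_+$; if $\calH = \sum_j h_j \otimes h_j$ exposes the face, then feeding $\calC = \sum_i c_i \otimes c_i$ (which lies in the face) into $\langle \cdot, \calH\rangle = 0$ forces $c_i \bullet h_j = 0$ for all $i,j$, hence $\langle \calB, \calH\rangle = 0$; yet $\calB(d) = \sum_i \lVert d\rVert^2 c_i \neq 0$ while every element of $\{0\}\oplus\COP(\bbE(c,1)_+)$ annihilates $\bbE(c,1)^\perp$. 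You should replace your unconstructed $\calA_0$ with this rank-two ``off-diagonal'' operator (or prove the copositivity of your $\calA_0$ in the abstract setting, which your proposal does not do); as written, the non-exposedness half of the theorem is not established.
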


The notation $\{0\} \oplus \COP(\bbE(c,1)_+)$ used in \eqref{eq:COP_sym_face} comes from the fact that it can be regarded as the set consisting of matrices whose lower-right block is an element of $\COP(\bbE(c,1)_+)$ and whose other blocks are zeros.
See also \cite[Section~2.2]{NL2025} for the rationale for this notation.

\begin{proof}
It was shown in \cite[Lemma~\mbox{3.1}]{NL2025} that the set in \eqref{eq:COP_sym_face} is a face of $\COP(\bbE_+)$.
In addition, the cone in \eqref{eq:COP_sym_face} is linearly isomorphic to $\COP(\bbE(c,1)_+)$ through the isomorphism $\psi\colon \calS(\bbE(c,1)) \to  \{\calP_{\bbE(c,1)}^* \calA \calP_{\bbE(c,1)} \mid \calA \in \calS(\bbE(c,1))\}$ such that
\begin{equation*}
\psi(\calA) = \calP_{\bbE(c,1)}^* \calA \calP_{\bbE(c,1)}.
\end{equation*}
Since $\COP(\bbE(c,1)_+)$ is full-dimensional in the space $\calS(\bbE(c,1))$, the dimension of $\COP(\bbE(c,1)_+)$ is $T_{\dim \bbE(c,1)}$.

If $c = e$, then the face in \eqref{eq:COP_sym_face} is equal to $\COP(\bbE_+)$, which is indeed exposed.
If $c = 0$, then the face in \eqref{eq:COP_sym_face} is equal to $\{0\}$, which is also an exposed face of $\COP(\bbE_+)$.
In what follows, we assume that $c$ is neither $e$ nor $0$ and assume, for the sake of obtaining a contradiction, that the face in \eqref{eq:COP_sym_face} is exposed.
Then there exists $\calH \in \CP(\bbE_+)$ such that
\begin{equation}
\{0\} \oplus \COP(\bbE(c,1)_+) =  \COP(\bbE_+) \cap \{\calH\}^\perp. \label{eq:COP_sym_face_exposed}
\end{equation}
We decompose the idempotent $c$ into primitive idempotents $c_1,\dots,c_p \in \bbE(c,1)$ that are orthogonal to each other.
Note that $p \ge 1$ since $c \neq 0$.
We define $\calC \coloneqq \sum_{i=1}^pc_i\otimes c_i$
and $\calC$ can be regarded as either an element in $\calS(\bbE(c,1))$ or in $\calS(\bbE)$.
Since $\calC$ is positive semidefinite, we have
\begin{equation*}
\calC = \calP_{\bbE(c,1)}^* \calC \calP_{\bbE(c,1)} \in \{0\} \oplus \COP(\bbE(c,1)_+).
\end{equation*}
Therefore, by \eqref{eq:COP_sym_face_exposed}, we see that $\calC \in \COP(\bbE_+) \cap \{\calH\}^\perp$.
Now, since $\calH \in \CP(\bbE_+)$, there exist $h_1,\dots,h_k\in \bbE_+$ such that $\calH = \sum_{j=1}^kh_j\otimes h_j$.
Then it follows from $\calC \in \{\calH\}^\perp$ that
\begin{equation*}
0 = \langle\calC,\calH\rangle = \sum_{i=1}^p\sum_{j=1}^k(c_i\bullet h_j)^2.
\end{equation*}
This implies that
\begin{equation}
c_i\bullet h_j = 0 \label{eq:ci_dot_hj_0}
\end{equation}
for all $i = 1,\dots,p$ and $j=1,\dots,k$.

The convex cone $\bbE(c,1)^\perp \cap \bbE_+$ has dimension greater than zero since $c \neq e$.
Let $d$ be a nonzero element in $\bbE(c,1)^\perp \cap \bbE_+$.
Using $d$, we define $\calB \coloneqq \sum_{i=1}^p(c_i\otimes d + d\otimes c_i)$.
For any $x\in \bbE_+$, it follows from the self-duality of $\bbE_+$ that
\begin{equation*}
x\bullet \calB(x) = 2\sum_{i=1}^p(c_i\bullet x)(d\bullet x) \ge 0,
\end{equation*}
so we have $\calB \in \COP(\bbE_+)$.
In addition, it follows from \eqref{eq:ci_dot_hj_0} that
\begin{equation*}
\langle\calB,\calH\rangle = 2\sum_{i=1}^p\sum_{j=1}^k(c_i\bullet h_j)(d\bullet h_j) = 0.
\end{equation*}
Therefore, we see from \eqref{eq:COP_sym_face_exposed} that $\calB \in \{0\} \oplus \COP(\bbE(c,1)_+)$.
This implies that there exists $\calA \in \COP(\bbE(c,1)_+)$ such that $\calB = \calP_{\bbE(c,1)}^* \calA \calP_{\bbE(c,1)}$.
Since $d \in \bbE(c,1)^\perp$, we obtain $\calB(d) = 0$.
However, by the definition of $\calB$, we have $\calB(d) = \sum_{i=1}^p\lVert d\rVert^2 c_i$, which is nonzero, a contradiction.
\end{proof}

Next, we phrase Theorem~\ref{thm:COP_sym_face} in geometric terms.
The key observation for that is the one-to-one correspondence between idempotents of a Euclidean Jordan algebra and faces of the underlying symmetric cone~\cite[Theorem~\mbox{3.1}]{GS2006}.
For any idempotent $c\in \bbE$, the cone $\bbE(c,1)_+$ is a face of the symmetric cone $\bbE_+$.
Converserly, for any face $\bbF$ of the symmetric cone $\bbE_+$, there exists an idempotent $c\in \bbE$ such that $\bbF = \bbE(c,1)_+$.
Then we obtain the following corollary, which implies  Theorem~\ref{thm:COP_sym_face_simplified}.

\begin{corollary}\label{cor:COP_sym_face}
Let $\bbK$ be a symmetric cone and $\bbF$ be a face of $\bbK$.
Then the set $\{0\}\oplus \COP(\bbF)$ is a $T_{\dim\bbF}$-dimensional face of $\COP(\bbK)$.
Furthermore, the face $\{0\}\oplus \COP(\bbF)$ is exposed if and only if the face $\bbF$ is either the symmetric cone $\bbK$ or the singleton $\{0\}$.
\end{corollary}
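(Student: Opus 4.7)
The plan is to deduce Corollary~\ref{cor:COP_sym_face} as a direct geometric restatement of Theorem~\ref{thm:COP_sym_face} via the bijection between faces of a symmetric cone and idempotents of the associated Euclidean Jordan algebra established in \cite[Theorem~3.1]{GS2006}.

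Concretely, I would first realize $\bbK$ as the cone of squares $\bbE_+$ of some Euclidean Jordan algebra $\bbE$ via \cite[Theorem~\mbox{\RNum{3}.3.1}]{FK1994}, and then use the face--idempotent correspondence to select an idempotent $c\in\bbE$ for which $\bbF = \bbE(c,1)_+$. Applying Theorem~\ref{thm:COP_sym_face} to this $c$ immediately yields that $\{0\}\oplus\COP(\bbF) = \{0\}\oplus\COP(\bbE(c,1)_+)$ is a face of $\COP(\bbE_+) = \COP(\bbK)$ of dimension $T_{\dim\bbE(c,1)}$. Since any symmetric cone is full-dimensional in its ambient space, $\dim\bbF = \dim\bbE(c,1)_+ = \dim\bbE(c,1)$, and the claimed dimension $T_{\dim\bbF}$ follows.

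For the exposedness clause, Theorem~\ref{thm:COP_sym_face} characterizes exposedness by $c\in\{0,e\}$. The correspondence identifies $c = e$ with $\bbF = \bbE(e,1)_+ = \bbE_+ = \bbK$ (since $e$ is the multiplicative identity, so $\bbE(e,1)=\bbE$) and $c = 0$ with $\bbF = \bbE(0,1)_+ = \{0\}$. Hence the exposedness condition translates precisely to $\bbF\in\{\{0\},\bbK\}$. There is no substantive obstacle beyond invoking the face--idempotent correspondence, since all the non-trivial content---especially the construction of a witness $\calB\in\COP(\bbE_+)\cap\{\calH\}^\perp$ that fails to lie in $\{0\}\oplus\COP(\bbE(c,1)_+)$ when $c$ is a proper nonzero idempotent---has already been carried out in the proof of Theorem~\ref{thm:COP_sym_face}.
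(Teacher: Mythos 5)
Your proposal is correct and matches the paper's own derivation: the paper likewise obtains the corollary by invoking the idempotent--face correspondence of \cite[Theorem~3.1]{GS2006} to write $\bbF = \bbE(c,1)_+$ and then applying Theorem~\ref{thm:COP_sym_face}, with the exposedness dichotomy translating via $c=e \Leftrightarrow \bbF=\bbK$ and $c=0 \Leftrightarrow \bbF=\{0\}$. Nothing further is needed.
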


We note that Corollary~\ref{cor:COP_sym_face} strengthens  \cite[Theorem~\mbox{3.3}]{NL2025}.
For a $1$-dimensional face $\bbR_+c$ of a symmetric cone, the associated face $\{0\}\oplus \COP(\bbR_+c)$ obtained in Corollary~\ref{cor:COP_sym_face} can be written as $\bbR_+c\otimes c$, whose non-exposedness we have shown in \cite[Theorem~\mbox{3.3}]{NL2025}.
By contrast, in Corollary~\ref{cor:COP_sym_face}, we show the non-exposedness of the face $\{0\}\oplus \COP(\bbF)$ for \emph{any} face $\bbF$ being neither the symmetric cone itself nor $\{0\}$.
Also, if as in Section~\ref{subsec:COP_CP}, $\bbK$ is contained in some $\bbR^n$ and we express the elements of $\{0\}\oplus \COP(\bbF)$ using the standard basis, we arrive at the expression in \eqref{eq:COPF_cap_span}.

As pointed out in \cite[page~\mbox{13}]{NL2025}, since a face $\bbF$ of a symmetric cone is also a symmetric cone on its span, we can apply Corollary~\ref{cor:COP_sym_face} to the face $\{0\}\oplus \COP(\bbF)$, which is linearly isomorphic to $\COP(\bbF)$.
Consequently, from a chain
\begin{equation}
\bbF_l \subsetneq \cdots \subsetneq \bbF_1 \label{eq:chain_face_symcone}
\end{equation}
of faces of a symmetric cone $\bbK$, we obtain the following chain of faces of $\COP(\bbK)$:
\begin{equation*}
\{0\}\oplus \COP(\bbF_l) \subsetneq \cdots \subsetneq \{0\}\oplus \COP(\bbF_1),
\end{equation*}
where the strict inclusions follow from $\dim(\{0\}\oplus \COP(\bbF_i)) = T_{\dim\bbF_i}$ for each $i = 1,\dots,l$ and $\dim\bbF_l < \cdots < \dim\bbF_1$.
For a symmetric cone $\bbK$ associated with a Euclidean Jordan algebra of rank $r$, it is known that the cone $\bbK$ has a chain of faces with length $\ell_{\bbK} = r + 1$~\cite[Theorem~\mbox{14}]{IL2017}.
Therefore, for such a symmetric cone $\bbK$, we have
\begin{equation}
\ell_{\COP(\bbK)} \ge r + 1. \label{eq:ell_COP_symcone_bound}
\end{equation}
In addition, for a chain in \eqref{eq:chain_face_symcone} of faces of the symmetric cone $\bbK$ with length $l = r+1$, it follows from Corollary~\ref{cor:COP_sym_face} that the subface $\{0\}\oplus \COP(\bbF_r)$ is not an exposed face of $\{0\}\oplus \COP(\bbF_{r-1})$.
Hence, the face $\{0\}\oplus \COP(\bbF_{r-1})$ is not polyhedral, so we have
\begin{equation}
\ell_{\rm poly}(\COP(\bbK)) \ge r-1. \label{eq:ellpoly_COP_symcone_bound}
\end{equation}

We can see from Theorem~\ref{thm:COP_sym_face} that for any symmetric cone $\bbE_+$ of dimension greater than or equal to $2$, the copositive cone $\COP(\bbE_+)$ has a face linearly isomorphic to the copositive cone over a second-order cone.
Indeed, by $\dim\bbE_+ \ge 2$, the rank $r$ of $\bbE$ is greater than or equal to $2$.
Then there exists an idempotent $c\in \bbE$ of rank $2$, so that the rank of the Euclidean Jordan subalgebra $\bbE(c,1)$ is $2$; see \cite[Proposition~2]{Faybusovich2006} and \cite[Theorem~5]{LT2020}.
If the Euclidean Jordan subalgebra $\bbE(c,1)$ is not simple, it is linearly isomorphic to the direct sum of the $1$-dimensional Euclidean Jordan algebra $\bbR$ with itself whose product and inner product are multiplication defined for real numbers.
In such a case, the cone $\bbE(c,1)_+$ is linearly isomorphic to the $2$-dimensional second-order cone.
If the Euclidean Jordan subalgebra $\bbE(c,1)$ is simple, by the classification result of simple Euclidean Jordan algebras of rank $2$~\cite[Corollary~\mbox{\RNum{4}.1.5}]{FK1994}, it is linearly isomorphic to the Euclidean Jordan algebra $\bbR^n$ for some $n\ge 3$ whose product $\hat{\circ}$ and inner product $\hat{\bullet}$ are defined as
\begin{align*}
\bm{x}\mathbin{\hat{\circ}}\bm{y} &\coloneqq \begin{pmatrix}
\bm{x}^\top\bm{y}\\
x_1\bm{y}_{2:n} + y_1\bm{x}_{2:n}
\end{pmatrix},\\
\bm{x}\mathbin{\hat{\bullet}}\bm{y} &\coloneqq \bm{x}^\top\bm{y}
\end{align*}
for each $\bm{x},\bm{y}\in \bbR^n$, respectively.
In such a case, the cone $\bbE(c,1)_+$ is linearly isomorphic to the $n$-dimensional second-order cone.
To sum up, regardless of whether $\bbE(c,1)$ is simple or not, the cone $\bbE(c,1)_+$ is linearly isomorphic to a second-order cone $\bbL^n$ for some $n\ge 2$.
Then Theorem~\ref{thm:COP_sym_face} implies that the copositive cone $\COP(\bbE_+)$ has a face linearly isomorphic to $\COP(\bbL^n)$.
Therefore, studying the facial structure of $\COP(\bbL^n)$ provides further insight into that of copositive cones over general symmetric cones.
As an illustration of such implications, by computing the length of a longest chain of faces and the distance to polyhedrality of $\COP(\bbL^n)$, we can strengthen the lower bounds shown in \eqref{eq:ell_COP_symcone_bound} and \eqref{eq:ellpoly_COP_symcone_bound}.

Analogous results hold for completely positive cones.
For any symmetric cone $\bbK$ of dimension greater than or equal to $2$, it follows from Proposition~\ref{prop:clconv_cone_to_CP} that the completely positive cone $\CP(\bbK)$ has a face linearly isomorphic to $\CP(\bbL^n)$ for some $n \ge 2$.
This fact serves as one of the motivations for the present study.
\end{section}

\end{appendices}

\bibliographystyle{plainurl} 
\bibliography{NL24_1_ref} %

\begin{thebibliography}{10}

\bibitem{BMP2016}
L.~Bai, J.E. Mitchell, and J.-S. Pang.
\newblock On conic {QPCC}s, conic {QCQPs} and completely positive programs.
\newblock {\em Math. Program.}, 159(1--2):109--136, 2016.
\newblock \href {https://doi.org/10.1007/s10107-015-0951-9}
  {\path{doi:10.1007/s10107-015-0951-9}}.

\bibitem{BC1975}
G.P. Barker and D.H. Carlson.
\newblock Cones of diagonally dominant matrices.
\newblock {\em Pac. J. Math.}, 57(1):15--32, 1975.
\newblock \href {https://doi.org/10.2140/pjm.1975.57.15}
  {\path{doi:10.2140/pjm.1975.57.15}}.

\bibitem{Barvinok2002}
A.~Barvinok.
\newblock {\em A Course in Convexity}.
\newblock American Mathematical Society, Providence, RI, 2002.

\bibitem{BDd+2000}
I.M. Bomze, M.~D\"{u}r, E.~de~Klerk, C.~Roos, A.J. Quist, and T.~Terlaky.
\newblock On copositive programming and standard quadratic optimization
  problems.
\newblock {\em J. Glob. Optim.}, 18(4):301--320, 2000.
\newblock \href {https://doi.org/10.1023/A:1026583532263}
  {\path{doi:10.1023/A:1026583532263}}.

\bibitem{BW1981_regularizing}
J.~Borwein and H.~Wolkowicz.
\newblock Regularizing the abstract convex program.
\newblock {\em J. Math. Anal. Appl.}, 83(2):495--530, 1981.
\newblock \href {https://doi.org/10.1016/0022-247X(81)90138-4}
  {\path{doi:10.1016/0022-247X(81)90138-4}}.

\bibitem{Burer2009}
S.~Burer.
\newblock On the copositive representation of binary and continuous nonconvex
  quadratic programs.
\newblock {\em Math. Program.}, 120(2):479--495, 2009.
\newblock \href {https://doi.org/10.1007/s10107-008-0223-z}
  {\path{doi:10.1007/s10107-008-0223-z}}.

\bibitem{Burer2012}
S.~Burer.
\newblock Copositive programming.
\newblock In M.F. Anjos and J.B. Lasserre, editors, {\em Handbook on
  Semidefinite, Conic and Polynomial Optimization}, pages 201--218. Springer,
  New York, NY, 2012.
\newblock \href {https://doi.org/10.1007/978-1-4614-0769-0_8}
  {\path{doi:10.1007/978-1-4614-0769-0_8}}.

\bibitem{BD2012}
S.~Burer and H.~Dong.
\newblock Representing quadratically constrained quadratic programs as
  generalized copositive programs.
\newblock {\em Oper. Res. Lett.}, 40(3):203--206, 2012.
\newblock \href {https://doi.org/10.1016/j.orl.2012.02.001}
  {\path{doi:10.1016/j.orl.2012.02.001}}.

\bibitem{Diananda1961}
P.H. Diananda.
\newblock On a conjecture of {L}. {J}. {M}ordell regarding an inequality
  involving quadratic forms.
\newblock {\em J. Lond. Math. Soc.}, s1-36(1):185--192, 1961.
\newblock \href {https://doi.org/10.1112/jlms/s1-36.1.185}
  {\path{doi:10.1112/jlms/s1-36.1.185}}.

\bibitem{Dickinson2011}
P.J.C. Dickinson.
\newblock Geometry of the copositive and completely positive cones.
\newblock {\em J. Math. Anal. Appl.}, 380(1):377--395, 2011.
\newblock \href {https://doi.org/10.1016/j.jmaa.2011.03.005}
  {\path{doi:10.1016/j.jmaa.2011.03.005}}.

\bibitem{Dickinson2013}
P.J.C. Dickinson.
\newblock {\em The Copositive Cone, the Completely Positive Cone and Their
  Generalisations}.
\newblock PhD thesis, University of Groningen, Groningen, Netherlands, 2013.

\bibitem{Dubins1962}
L.E. Dubins.
\newblock On extreme points of convex sets.
\newblock {\em J. Math. Anal. Appl.}, 5(2):237--244, 1962.
\newblock \href {https://doi.org/10.1016/S0022-247X(62)80007-9}
  {\path{doi:10.1016/S0022-247X(62)80007-9}}.

\bibitem{Ewald1996}
G.~Ewald.
\newblock {\em Combinatorial Convexity and Algebraic Geometry}.
\newblock Springer, New York, NY, 1996.
\newblock \href {https://doi.org/10.1007/978-1-4612-4044-0}
  {\path{doi:10.1007/978-1-4612-4044-0}}.

\bibitem{FK1994}
J.~Faraut and A.~Kor\'{a}nyi.
\newblock {\em Analysis on Symmetric Cones}.
\newblock Clarendon Press, Oxford, UK, 1994.

\bibitem{Faybusovich2006}
L.~Faybusovich.
\newblock Jordan-algebraic approach to convexity theorems for quadratic
  mappings.
\newblock {\em SIAM J. Optim.}, 17(2):558--576, 2006.
\newblock \href {https://doi.org/10.1137/050635560}
  {\path{doi:10.1137/050635560}}.

\bibitem{Gowda2019}
M.S. Gowda.
\newblock Weighted {LCP}s and interior point systems for copositive linear
  transformations on {E}uclidean {J}ordan algebras.
\newblock {\em J. Glob. Optim.}, 74(2):285--295, 2019.
\newblock \href {https://doi.org/10.1007/s10898-019-00760-7}
  {\path{doi:10.1007/s10898-019-00760-7}}.

\bibitem{GS2006}
M.S. Gowda and R.~Sznajder.
\newblock Automorphism invariance of {P}- and {GUS}-properties of linear
  transformations on {E}uclidean {J}ordan algebras.
\newblock {\em Math. Oper. Res.}, 31(1):109--123, 2006.
\newblock \href {https://doi.org/10.1287/moor.1050.0182}
  {\path{doi:10.1287/moor.1050.0182}}.

\bibitem{GST2013}
M.S. Gowda, R.~Sznajder, and J.~Tao.
\newblock The automorphism group of a completely positive cone and its {L}ie
  algebra.
\newblock {\em Linear Algebra Appl.}, 438(10):3862--3871, 2013.
\newblock \href {https://doi.org/10.1016/j.laa.2011.10.006}
  {\path{doi:10.1016/j.laa.2011.10.006}}.

\bibitem{HM2012}
J.-B. Hiriart-Urruty and J.~Malick.
\newblock A fresh variational-analysis look at the positive semidefinite
  matrices world.
\newblock {\em J. Optim. Theory Appl.}, 153(3):551--577, 2012.
\newblock \href {https://doi.org/10.1007/s10957-011-9980-6}
  {\path{doi:10.1007/s10957-011-9980-6}}.

\bibitem{HJ2013}
R.A. Horn and C.R. Johnson.
\newblock {\em Matrix Analysis}.
\newblock Cambridge University Press, New York, NY, second edition, 2013.
\newblock \href {https://doi.org/10.1017/CBO9781139020411}
  {\path{doi:10.1017/CBO9781139020411}}.

\bibitem{IL2017}
M.~Ito and B.F. Louren\c{c}o.
\newblock A bound on the {C}arath\'{e}odory number.
\newblock {\em Linear Algebra Appl.}, 532:347--363, 2017.
\newblock \href {https://doi.org/10.1016/j.laa.2017.06.043}
  {\path{doi:10.1016/j.laa.2017.06.043}}.

\bibitem{KV2018}
B.~Korte and J.~Vygen.
\newblock {\em Combinatorial Optimization: Theory and Algorithms}.
\newblock Springer, Berlin, Heidelberg, Germany, sixth edition, 2018.
\newblock \href {https://doi.org/10.1007/978-3-662-56039-6}
  {\path{doi:10.1007/978-3-662-56039-6}}.

\bibitem{Lewis1997}
A.S. Lewis.
\newblock Eigenvalue-constrained faces.
\newblock {\em Linear Algebra Appl.}, 269(1--3):159--181, 1998.
\newblock \href {https://doi.org/10.1016/S0024-3795(97)81515-7}
  {\path{doi:10.1016/S0024-3795(97)81515-7}}.

\bibitem{LMT2018}
B.F. Louren\c{c}o, M.~Muramatsu, and T.~Tsuchiya.
\newblock Facial reduction and partial polyhedrality.
\newblock {\em SIAM J. Optim.}, 28(3):2304--2326, 2018.
\newblock \href {https://doi.org/10.1137/15M1051634}
  {\path{doi:10.1137/15M1051634}}.

\bibitem{LT2020}
B.F. Louren\c{c}o and A.~Takeda.
\newblock Generalized subdifferentials of spectral functions over {E}uclidean
  {J}ordan algebras.
\newblock {\em SIAM J. Optim.}, 30(4):3387--3414, 2020.
\newblock \href {https://doi.org/10.1137/19M1245001}
  {\path{doi:10.1137/19M1245001}}.

\bibitem{Nishijima2024}
M.~Nishijima.
\newblock On the longest chain of faces of the completely positive and
  copositive cones.
\newblock {\em Linear Algebra Appl.}, 698:479--491, 2024.
\newblock \href {https://doi.org/10.1016/j.laa.2024.06.012}
  {\path{doi:10.1016/j.laa.2024.06.012}}.

\bibitem{NL2025}
M.~Nishijima and B.F. Louren\c{c}o.
\newblock Non-facial exposedness of copositive cones over symmetric cones.
\newblock {\em J. Math. Anal. Appl.}, 545(2):129166, 2025.
\newblock \href {https://doi.org/10.1016/j.jmaa.2024.129166}
  {\path{doi:10.1016/j.jmaa.2024.129166}}.

\bibitem{NN2024_Approximation}
M.~Nishijima and K.~Nakata.
\newblock Approximation hierarchies for copositive cone over symmetric cone and
  their comparison.
\newblock {\em J. Glob. Optim.}, 88(4):831--870, 2024.
\newblock \href {https://doi.org/10.1007/s10898-023-01319-3}
  {\path{doi:10.1007/s10898-023-01319-3}}.

\bibitem{NN2024_Generalizations}
M.~Nishijima and K.~Nakata.
\newblock Generalizations of doubly nonnegative cones and their comparison.
\newblock {\em J. Oper. Res. Soc. Jpn.}, 67(3):84--109, 2024.
\newblock \href {https://doi.org/10.15807/jorsj.67.84}
  {\path{doi:10.15807/jorsj.67.84}}.

\bibitem{Orlitzky2021}
M.~Orlitzky.
\newblock Gaddum's test for symmetric cones.
\newblock {\em J. Glob. Optim.}, 79(4):927--940, 2021.
\newblock \href {https://doi.org/10.1007/s10898-020-00960-6}
  {\path{doi:10.1007/s10898-020-00960-6}}.

\bibitem{Pataki2000}
G.~Pataki.
\newblock The geometry of semidefinite programming.
\newblock In H.~Wolkowicz, R.~Saigal, and L.~Vandenberghe, editors, {\em
  Handbook of Semidefinite Programming: Theory, Algorithms, and Applications},
  pages 29--65. Kluwer Academic Publishers, Norwell, MA, 2000.
\newblock \href {https://doi.org/10.1007/978-1-4615-4381-7_3}
  {\path{doi:10.1007/978-1-4615-4381-7_3}}.

\bibitem{PT2007}
I.~P\'{o}lic and T.~Terlaky.
\newblock A survey of the {S}-lemma.
\newblock {\em SIAM Rev.}, 49(3):371--418, 2007.
\newblock \href {https://doi.org/10.1137/S003614450444614X}
  {\path{doi:10.1137/S003614450444614X}}.

\bibitem{Rockafellar1970}
R.T. Rockafellar.
\newblock {\em Convex Analysis}.
\newblock Princeton University Press, Princeton, NJ, 1970.

\bibitem{RSY2018}
V.~Roshchina, T.~Sang, and D.~Yost.
\newblock Compact convex sets with prescribed facial dimensions.
\newblock In D.R. Wood, J.~de~Gier, C.E. Praeger, and T.~Tao, editors, {\em
  2016 MATRIX Annals}, pages 167--175. Springer, Cham, Switzerland, 2018.
\newblock \href {https://doi.org/10.1007/978-3-319-72299-3_7}
  {\path{doi:10.1007/978-3-319-72299-3_7}}.

\bibitem{Satake1975}
I.~Satake.
\newblock {\em Linear Algebra}.
\newblock Macel Dekker, New York, NY, 1975.

\bibitem{SB2021}
N.~Shaked-Monderer and A.~Berman.
\newblock {\em Copositive and Completely Positive Matrices}.
\newblock World Scientific, Singapore, 2021.
\newblock \href {https://doi.org/10.1142/11386} {\path{doi:10.1142/11386}}.

\bibitem{Soltan2020}
V.~Soltan.
\newblock {\em Lectures on Convex Sets}.
\newblock World Scientific, Singapore, second edition, 2020.
\newblock \href {https://doi.org/10.1142/11328} {\path{doi:10.1142/11328}}.

\bibitem{SZ2003}
J.F. Sturm and S.~Zhang.
\newblock On cones of nonnegative quadratic functions.
\newblock {\em Math. Oper. Res.}, 28(2):246--267, 2003.
\newblock \href {https://doi.org/10.1287/moor.28.2.246.14485}
  {\path{doi:10.1287/moor.28.2.246.14485}}.

\bibitem{Tam1976}
B.-S. Tam.
\newblock A note on polyhedral cones.
\newblock {\em J. Aust. Math. Soc.}, 22(4):456--461, 1976.
\newblock \href {https://doi.org/10.1017/S1446788700016311}
  {\path{doi:10.1017/S1446788700016311}}.

\bibitem{YZ2018}
G.~Yu and Q.~Zhang.
\newblock Completely positive cones as subcones of doubly nonnegative cones.
\newblock {\em Pac. J. Optim.}, 14(4):659--673, 2018.

\end{thebibliography}

\end{document}